\definecolor{lightgray}{gray}{0.9}
\definecolor{nickcol}{rgb}{0.580392157, 0.0, 0.82745098}
\definecolor{extlink}{cmyk}{0,0,0,1}
\definecolor{intlink}{cmyk}{0,0,0,1}
\definecolor{softpage}{cmyk}{0,0,0,1}
\def\@linkcolor{intlink}
\def\@anchorcolor{black}
\def\@citecolor{intlink}
\def\@filecolor{cyan}
\def\@urlcolor{extlink}
\def\@menucolor{red}
\newtheorem{theorem}{Theorem}[section]
\newtheorem{assumption}[theorem]{Assumption}
\newtheorem{lemma}[theorem]{Lemma}
\begin{document}

\doi{}
\jvol{}
\jnum{}
\jyear{}
\received{}

\iftoggle{arxivpaper}{
\title{Adaptive Augmented Lagrangian Methods:\\ Algorithms and Practical Numerical Experience---Detailed Version}
}{
\title{Adaptive Augmented Lagrangian Methods:\\ Algorithms and Practical Numerical Experience}
}

\author{Frank E.~Curtis$^{\dagger}$\thanks{$^{\dagger}$Department of Industrial and Systems Engineering, Lehigh University, Bethlehem, PA, USA. E-mail: \href{mailto:frank.e.curtis@gmail.com}{frank.e.curtis@gmail.com}.  Supported by Department of Energy grant DE--SC0010615.},
        Nicholas I.~M.~Gould$^{\ddagger}$\thanks{$^{\ddagger}$STFC-Rutherford Appleton Laboratory, Numerical Analysis Group, R18, Chilton, OX11 0QX, UK.  E-mail: \href{mailto:nick.gould@stfc.ac.uk}{nick.gould@stfc.ac.uk}.  Supported by Engineering and Physical Sciences Research Council grant EP/I013067/1.},
        Hao Jiang${^\S}$, and Daniel P.~Robinson${^\S}$\thanks{${^\S}$Department of Applied Mathematics and Statistics, Johns Hopkins University, Baltimore, MD, USA.  E-mail: \href{mailto:hjiang13@jhu.edu}{hjiang13@jhu.edu}, \href{mailto:daniel.p.robinson@gmail.com}{daniel.p.robinson@gmail.com}.  Supported by National Science Foundation grant DMS-1217153.}
        }

\date{\today}

\maketitle

\begin{abstract}
  In this paper, we consider augmented Lagrangian (AL) algorithms for solving large-scale nonlinear optimization problems that execute adaptive strategies for updating the penalty parameter.  Our work is motivated by the recently proposed adaptive AL trust region method by Curtis, Jiang, and Robinson~[\href{http://link.springer.com/article/10.1007/s10107-014-0784-y}{Math.~Prog., DOI:~10.1007/s10107-014-0784-y, 2013}].  The first focal point of this paper is a new variant of the approach that employs a line search rather than a trust region strategy, where a critical algorithmic feature for the line search strategy is the use of convexified piecewise quadratic models of the AL function for computing the search directions.  We prove global convergence guarantees for our line search algorithm that are on par with those for the previously proposed trust region method.  A second focal point of this paper is the practical performance of the line search and trust region algorithm variants in \Matlab{} software, as well as that of an adaptive penalty parameter updating strategy incorporated into the \lancelot{} software.  We test these methods on problems from the \CUTEst{} and \COPS{} collections, as well as on challenging test problems related to optimal power flow.  Our numerical experience suggests that the adaptive algorithms outperform traditional AL methods in terms of efficiency and reliability.  As with traditional AL algorithms, the adaptive methods are matrix-free and thus represent a viable option for solving extreme-scale problems.
\end{abstract}

\begin{keywords}
  nonlinear optimization, nonconvex optimization, large-scale optimization, augmented Lagrangians, matrix-free methods, steering methods
\end{keywords}

\begin{classcode}
  49M05, 49M15, 49M29, 49M37, 65K05, 65K10, 90C06, 90C30, 93B40 
\end{classcode}

\section{Introduction}\label{sec.introduction}

Augmented Lagrangian (AL) methods \cite{Hes69,Pow69} have recently regained popularity due to growing interests in solving extreme-scale nonlinear optimization problems.  These methods are attractive in such settings as they can be implemented matrix-free~\cite{AndBMS08,BirM12,ConGT91c,KocS03} and have global and local convergence guarantees under relatively weak assumptions~\cite{FerS10b,IzmS11}.  Furthermore, certain variants of AL methods~\cite{GabaMerc76,GlowMarr75} have proved to be very efficient for solving certain structured problems~\cite{BoyPCPE11,QinGM11,YanZY10}.

A new AL trust region method was recently proposed and analyzed in \cite{curtis12}.  The novel feature of that algorithm is an adaptive strategy for updating the penalty parameter inspired by techniques for performing such updates in the context of exact penalty methods \cite{ByrdLopeNoce10,ByrNW08,MS95automaticdecrease}.  This feature is designed to overcome a potentially serious drawback of traditional AL methods, which is that they may be ineffective during some (early) iterations due to poor choices of the penalty parameter and/or Lagrange multiplier estimates.  In such situations, the poor choices of these quantities may lead to little or no improvement in the primal space and, in fact, the iterates may diverge from even a well-chosen initial iterate.  The key idea for avoiding this behavior in the algorithm proposed in \cite{curtis12} is to adaptively update the penalty parameter \emph{during} the step computation in order to ensure that the trial step yields a sufficiently large reduction in linearized constraint violation, thus \emph{steering} the optimization process steadily toward constraint satisfaction.

The contributions of this paper are two-fold.  First, we present an AL line search method based on the same framework employed for the trust region method in~\cite{curtis12}.  The main difference between our new approach and that in \cite{curtis12}, besides the differences inherent in using line searches instead of a trust region strategy, is that we utilize a convexified piecewise quadratic model of the AL function to compute the search direction in each iteration.  With this modification, we prove that our line search method achieves global convergence guarantees on par with those proved for the trust region method in \cite{curtis12}.  The second contribution of this paper is that we perform extensive numerical experiments with a \Matlab{} implementation of the adaptive algorithms (i.e., both line search and trust region variants) and an implementation of an adaptive penalty parameter updating strategy in the \lancelot{} software~\cite{ConGT92a}.  We test these implementations on problems from the \CUTEst{}~\cite{GouOT03} and \COPS{}~\cite{BonDM98} collections, as well as on test problems related to optimal power flow~\cite{zimmerman2011matpower}.  Our results indicate that our adaptive algorithms outperform traditional AL methods in terms of efficiency and reliability.

The remainder of the paper is organized as follows.  In \S\ref{sec.basic}, we present our adaptive AL line search method and state convergence results.  Details and proofs of these results, which draw from those in \cite{curtis12}, can be found in Appendices~\ref{subsec-wellposed} and \ref{sec-convergence}\iftoggle{arxivpaper}{}{ and in \cite{CJJR-arxiv}}.  We then provide numerical results in \S\ref{sec-numerical} to illustrate the effectiveness of our implementations of our adaptive AL algorithms.  We give conclusions in \S\ref{sec-conclusions}.
\medskip

\noindent
\emph{Notation.}  We often drop function arguments once a function is defined.  We also use a subscript on a function name to denote its value corresponding to algorithmic quantities using the same subscript.  For example, for a function $f : \Re^n \to \Re$, if $x_k$ is the value for the variable $x$ during iteration $k$ of an algorithm, then $f_k := f(x_k)$.  We also often use subscripts for constants to indicate the algorithmic quantity to which they correspond. For example, $\gamma_\mu$ denotes a parameter corresponding to the algorithmic quantity $\mu$. 

\section{An Adaptive Augmented Lagrangian Line Search Algorithm}\label{sec.basic}

\subsection{Preliminaries}

We assume that all problems under our consideration are formulated as
\begin{equation} \label{nep}
  \minimizearg{x}{n}\ f(x)\ \subject\ c(x) = 0, \mgap l \leq x \leq u.
\end{equation}
Here, we assume that the objective function $f:\Re^n \to \Re$ and constraint function $c:\Re^n\to\Re^m$ are twice continuously differentiable, and that the variable lower bound vector $l \in \Re^n$ and upper bound vector $u \in \Re^n$ satisfy $l \leq u$.  Our goal is to design an algorithm that will compute a first-order primal-dual stationary point for problem \eqref{nep}.  However, in order for the algorithm to be suitable as a general-purpose approach, it should have mechanisms for terminating and providing useful information when an instance of \eqref{nep} is (locally) infeasible.  In such cases, we have designed our algorithm so that it transitions to finding an infeasible first-order stationary point for the nonlinear feasibility problem
\begin{equation}\label{vep}
  \minimizearg{x}{n}\ v(x)\ \subject\ l\leq x\leq u,
\end{equation}
where $v : \Re^n \to \Re$ is defined as $v(x) = \half \twonorm{c(x)}^2$.

As implied by the previous paragraph, our algorithm requires first-order stationarity conditions for problems \eqref{nep} and \eqref{vep}, which can be stated in the following manner.  First, introducing a Lagrange multiplier vector $y \in \Re^m$, we define the Lagrangian for problem~\eqref{nep}, call it $\ell : \Re^n \times \Re^m \to \Re$, by
\begin{equation*}
  \ell(x,y) = f(x) - c(x)\T y.
\end{equation*}
Then, defining the gradient of the objective function $g : \Re^n \to \Re^n$ by $g(x) = \Grad f(x)$, the Jacobian of the constraint functions $J : \Re^n \to \Re^{m \times n}$ by $J(x) = \Grad c(x)$, and the projection operator onto the bounds $P : \Re^n \to \Re^n$, component-wise for $i \in \{1,\dots,n\}$, by
\begin{equation*}
  [P(x)]_i=
  \left\{
  \begin{aligned}
    &l_i && \text{if } x_i \leq l_i \\
    &u_i && \text{if } x_i \geq u_i \\
    &x_i && \text{otherwise}
  \end{aligned}
  \right.
\end{equation*}
we may introduce the primal-dual stationarity measure
$F\sub{L} : \Re^n \times \Re^m \to \Re^n$ given by
\begin{equation*}
  F\sub{L}(x,y) = P(x - \Grad_x \ell (x,y)) - x = P(x - (g(x) - J(x)\T y)) - x.
\end{equation*}
First-order primal-dual stationary points for \eqref{nep} can then be characterized as zeros of the primal-dual stationarity measure $F\sub{OPT} : \Re^n \times \Re^m \to \Re^{n+m}$ defined by stacking the stationarity measure $F\sub{L}$ and the constraint function $-c$, i.e., a first-order primal-dual stationary point for \eqref{nep} is any pair $(x,y)$ with $l\leq x\leq u$ satisfying
\begin{equation}\label{F-zero}
  0 = F\sub{OPT}(x,y) = \pmat{ F\sub{L}(x,y) \\ -c(x) } = \pmat{ P(x - \Grad_x \ell(x,y))-x \\ \Grad_y \ell(x,y) }.
\end{equation} 
Similarly, a first-order primal stationary point for \eqref{vep} is any $x$ with $l \leq x \leq u$ satisfying
\begin{equation}\label{Finf-zero}
  0 = F\sub{FEAS}(x),
\end{equation}
where $F\sub{FEAS} : \Re^n \to \Re^n$ is defined by
\begin{equation*}
  F\sub{FEAS}(x) = P(x - \Grad_x v(x)) - x = P(x - J(x)\T c(x)) - x.
\end{equation*}
In particular, if $l \leq x \leq u$, $v(x) > 0$, and \eqref{Finf-zero} holds, then $x$ is an infeasible stationary point for problem~\eqref{nep}.

Over the past decades, a variety of effective numerical methods have been proposed for solving large-scale bound-constrained optimization problems.  Hence, the critical issue in solving problem \eqref{nep} is how to handle the presence of the equality constraints.  As in the wide variety of penalty methods that have been proposed, the strategy adopted by AL methods is to remove these constraints, but push the algorithm to satisfy them through the addition of influential terms in the objective.  In this manner, problem \eqref{nep} (or at least \eqref{vep}) can be solved via a sequence of bound-constrained subproblems---thus allowing AL methods to exploit the methods that are available for subproblems of this type.  Specifically, AL methods consider a sequence of subproblems in which the objective is a weighted sum of the Lagrangian $\ell$ and the constraint violation measure $v$.  By scaling $\ell$ by a penalty parameter $\mu \geq 0$, each subproblem involves the minimization of a function $\Lscr : \Re^n \times \Re^m \times \Re \to \Re$, called the augmented Lagrangian (AL), defined by
\begin{equation*}
  \Lscr(x,y,\mu) = \mu\ell(x,y) + v(x) = \mu(f(x) - c(x)\T y) + \half\twonorm{c(x)}^2.
\end{equation*}
Observe that the gradient of the AL with respect to $x$, evaluated at $(x,y,\mu)$, is given by
\begin{equation*}
  \Grad_x\Lscr(x, y,\mu)
  = \mu\big(g(x) - J(x)\T \pi(x,y,\mu)\big), 
\end{equation*}
where we define the function $\pi : \Re^n \times \Re^m \times \Re \to \Re^m$ by
\begin{equation}\label{def-pi}
  \pi(x,y,\mu) = y - \tfrac{1}{\mu}c(x).
\end{equation}
Hence, each subproblem to be solved in an AL method has the form
\begin{equation} \label{prob-al}
  \minimize{x\in\Re^n}\ \Lscr(x,y,\mu)
  \ \subject\ l \leq x \leq u.
\end{equation}
Given a pair $(y,\mu)$, a first-order stationary point for problem~\eqref{prob-al} is any zero of the primal-dual stationarity measure $F\sub{AL} : \Re^n \times \Re^m \times \Re \to \Re^n$, defined similarly to $F\sub{L}$ but with the Lagrangian replaced by the augmented Lagrangian; i.e., given $(y,\mu)$, a first-order stationary point for \eqref{prob-al} is any $x$ satisfying
\begin{equation} \label{foo-al}
   0 = F\sub{AL}(x,y,\mu) = P(x - \Grad_x\Lscr(x,y,\mu))-x.
\end{equation}

Given a pair $(y,\mu)$ with $\mu > 0$, a traditional AL method proceeds by (approximately) solving \eqref{prob-al}, which is to say that it finds a point, call it $x(y,\mu)$, that (approximately) satisfies \eqref{foo-al}.  If the resulting pair $(x(y,\mu),y)$ is not a first-order primal-dual stationary point for \eqref{nep}, then the method would modify the Lagrange multiplier $y$ or penalty parameter $\mu$ so that, hopefully, the solution of the subsequent subproblem (of the form \eqref{prob-al}) yields a better primal-dual solution estimate for \eqref{nep}.  The function $\pi$ plays a critical role in this procedure.  In particular, observe that if $c(x(y,\mu)) = 0$, then $\pi(x(y,\mu),y,\mu) = y$ and \eqref{foo-al} would imply $F\sub{OPT}(x(y,\mu),y) = 0$, i.e., that $(x(y,\mu),y)$ is a first-order primal-dual stationary point for~\eqref{nep}.  Hence, if the constraint violation at $x(y,\mu)$ is sufficiently small, then a traditional AL method would set the new value of $y$ as $\pi(x,y,\mu)$.  Otherwise, if the constraint violation is not sufficiently small, then the penalty parameter is decreased to place a higher priority on reducing it during subsequent iterations.

\subsection{Algorithm Description}\label{sec-LSalgorithm}

Our AL line search algorithm is similar to the AL trust region method proposed in \cite{curtis12}, except for two key differences: it executes line searches rather than using a trust region framework, and it employs a convexified piecewise quadratic model of the AL function for computing the search direction in each iteration.  The main motivation for utilizing a convexified model is to ensure that each computed search direction is a direction of strict descent for the AL function from the current iterate, which is necessary to ensure the well-posedness of the line search.  However, it should be noted that, practically speaking, the convexification of the model does not necessarily add any computational difficulties when computing each direction; see \S\ref{sec-implement-matlab}.  Similar to the trust region method proposed in~\cite{curtis12}, a critical component of our algorithm is the adaptive strategy for updating the penalty parameter $\mu$ during the search direction computation.  This is used to ensure steady progress---i.e., steer the algorithm---toward solving~\eqref{nep} (or at least \eqref{vep}) by monitoring predicted improvements in linearized feasibility.

The central component of each iteration of our algorithm is the search direction computation.  In our approach, this computation is performed based on local models of the constraint violation measure $v$ and the AL function $\Lscr$ at the current iterate, which at iteration $k$ is given by $(x_k,y_k,\mu_k)$.  The local models that we employ for these functions are, respectively, $\qv : \Re^n \to \Re$ and $\tilde{q} : \Re^n \to \Re$, defined as follows:
\begin{alignat*}{2}\label{dpr-test}
  \qv(s;x) &= \half\twonorm{c(x)+J(x)s}^2 \\
  \tilde{q}(s;x,y,\mu) &= \Lscr(x,y) + \Grad_x \Lscr(x,y)\T s + \max\{\half s\T (\mu \Hess_{xx} \ell(x,y) + J(x)\T J(x))s,0\}.
\end{alignat*}
We note that $\qv$ is a typical Gauss-Newton model of the constraint violation measure~$v$, and $\tilde{q}$ is a convexification of a second-order approximation of the augmented Lagrangian.  (We use the notation $\tilde{q}$ rather than simply $q$ to distinguish between the model above and the second-order model---without the max---that appears extensively in \cite{curtis12}.)

Our algorithm computes two types of steps during each iteration.  The purpose of the first step, which we refer to as the steering step, is to gauge the progress towards linearized feasibility that may be achieved (locally) from the current iterate.  This is done by (approximately) minimizing our model $\qv$ of the constraint violation measure $v$ within the bound constraints and a trust region.  Then, a step of the second type is computed by (approximately) minimizing our model $\tilde{q}$ of the AL function $\Lscr$ within the bound constraints and a trust region.  If the reduction in the model $\qv$ yielded by the latter step is sufficiently large---say, compared to that yielded by the steering step---then the algorithm proceeds using this step as the search direction.  Otherwise, the penalty parameter may be reduced, in which case a step of the latter type is recomputed.  This process repeats iteratively until a search direction is computed that yields a sufficiently large (or at least not too negative) reduction in $\qv$.  As such, the iterate sequence is intended to make steady progress toward (or at least approximately maintain) constraint satisfaction throughout the optimization process, regardless of the initial penalty parameter value.

We now describe this process in more detail.  During iteration $k$, the steering step $r_k$ is computed via the optimization subproblem given by
\begin{equation} \label{prob-normal}
  \minimize{r\in\Re^n}\ \qv(r;x_k)\ \subject\ l\leq x_k+r \leq u, \,\twonorm{r} \leq \TRk,
\end{equation}
where, for some constant $\delta > 0$, the trust region radius is defined to be
\begin{equation}\label{sandy}
   \TRk := \delta\twonorm{F\sub{FEAS}(x_k)} \geq 0.
\end{equation}
A consequence of this choice of trust region radius is that it forces the steering step to be smaller in norm as the iterates of the algorithm approach any stationary point of the constraint violation measure~\cite{Toi13}.  This prevents the steering step from being too large relative to the progress that can be made toward minimizing $v$.  While \eqref{prob-normal} is a convex optimization problem for which there are efficient methods, in order to reduce computational expense our algorithm only requires $r_k$ to be an approximate solution of~\eqref{prob-normal}.  In particular, we merely require that $r_k$ yields a reduction in $\qv$ that is proportional to that yielded by the associated Cauchy step (see \eqref{cond1} later on), which is defined to be
\begin{equation} \label{def-cauchy}
  \rCk := \rC(x_k,\TRk) := P(x_k-\betaC_k J_k^Tc_k) - x_k
\end{equation}
for $\betaC_k:=\betaC(x_k,\TRk)$ such that, for some $\eps_r\in(0,1)$, the step $\rCk$ satisfies
\begin{equation} \label{c1}
  \delmod_v(\rCk;x_k) := \qv(0;x_k) - \qv(\rCk;x_k) \geq - \eps_r \rCk\T J_k\T c_k \words{and} \twonorm{\rCk}\leq\TRk.
\end{equation}

Appropriate values for $\betaC_k$ and $\rCk$---along with auxiliary nonnegative scalar quantities $\eps_k$ and $\Gamma_k$ to be used in subsequent calculations in our method---are computed by Algorithm~\ref{alg-rCk}.  The quantity $\delmod_v(\rCk;x_k)$ representing the predicted reduction in constraint violation yielded by $\rCk$ is guaranteed to be positive at any $x_k$ that is not a first-order stationary point for $v$ subject to the bound constraints; see part (i) of Lemma~\ref{keylemma1}.  We define a similar reduction $\delmodv(r_k;x_k)$ for the steering step $r_k$. 

\begin{algorithm}
  \caption{Cauchy step computation for the feasibility subproblem~\eqref{prob-normal}}
  \label{alg-rCk}
 \begin{algorithmic}[1]
    \Procedure{Cauchy\_feasibility}{$x_k,\theta_k$}
     \State \textbf{restrictions} : $\theta_k \geq 0$.
      \State \textbf{available constants} : $\{\eps_r,\gamma\} \subset (0,1)$.
      \State Compute the smallest integer $l_k \geq 0$ satisfying $\twonorm{P(x_k - \gamma^{l_k} J_k\T c_k)- x_k} \leq \theta_k$.
      \If{$l_k > 0$}
        \State Set $\Gamma_k \gets \min\{2,\half(1+\twonorm{P(x_k - \gamma^{l_k-1} J_k\T c_k)-x_k}/\theta_k)\}$. 
      \Else
\State Set $\Gamma_k \gets 2$.
  \EndIf
  \State Set $\betaC_k \gets \gamma^{l_k}$, $\rCk \gets P(x_k - \betaC_k J_k\T c_k) - x_k$, and $\eps_k \gets 0$.
  \While{$\rCk$ does not satisfy~\eqref{c1}}\label{line-rCk-while}
\State Set $\eps_k \gets \max(\eps_k,-\delmod_v(\rCk;x_k) / \rCk\T J_k\T c_k)$.
     \State Set $\betaC_k \gets \gamma\betaC_k$ and $\rCk \gets P(x_k - \betaC_k J_k\T c_k) - x_k$.
  \EndWhile                
        \State \Return: $(\betaC_k,\rCk,\eps_k,\Gamma_k)$
        \EndProcedure
\end{algorithmic}
\end{algorithm}

After computing a steering step $r_k$, we proceed to compute a trial step $s_k$ via
\begin{equation} \label{subprob-al}
   \minimize{s\in\Re^n}\ \tilde{q}(s;x_k,y_k,\mu_k)\ \subject\ l \leq x_k+s \leq u, \,\twonorm{s} \leq \TRALk,
\end{equation}
where, given $\Gamma_k > 1$ from the output of Algorithm~\ref{alg-rCk}, we define the trust region radius
\begin{equation}\label{sandy-AL}
  \TRALk:= \TRAL(x_k,y_k,\mu_k,\Gamma_k) = \Gamma_k \delta \twonorm{F\sub{AL}(x_k,y_k,\mu_k)} \geq 0.
\end{equation}
As for the steering step, we allow inexactness in the solution of \eqref{subprob-al} by only requiring the step $s_k$ to satisfy a Cauchy decrease condition (see \eqref{cond2} later on), where the Cauchy step for problem~\eqref{subprob-al} is
\begin{equation}\label{def-cauchy2}
  \sCk := \sC(x_k,y_k,\mu_k,\TRALk,\eps_k) := P(x_k-\alphaC_k\Grad_x \Lscr(x_k,y_k,\mu_k))-x_k
\end{equation}
for $\alphaC_k=\alphaC(x_k,y_k,\mu_k,\TRALk,\eps_k)$ such that, for $\eps_k \geq 0$ returned from Algorithm~\ref{alg-rCk}, $\sCk$ yields
\begin{equation} \label{c2}
  \begin{aligned}
    \Deltait\tilde{q}(\sCk;x_k,y_k,\mu_k) :=&\ \tilde{q}(0;x_k,y_k,\mu_k) - \tilde{q}(\sCk;x_k,y_k,\mu_k) \\
    \geq&\ - \frac{(\eps_k+\eps_r)}{2} \sCk\T \Grad_x\Lscr(x_k,y_k,\mu_k) \words{and} \twonorm{\sCk}\leq\TRALk.
  \end{aligned}
\end{equation}

Algorithm~\ref{alg-sCk} describes our procedure for computing $\alphaC_k$ and $\sCk$.  (The importance of incorporating $\Gamma_k$ in~\eqref{sandy-AL} and $\eps_k$ in~\eqref{c2} is revealed in the proofs of Lemmas~\ref{lem-algo} and~\ref{lem-models}\iftoggle{arxivpaper}{}{; see~\cite{CJJR-arxiv}}.)  The quantity $\Deltait\tilde{q}(\sCk;x_k,y_k,\mu_k)$ representing the predicted reduction in $\Lscr(\cdot,y_k,\mu_k)$ yielded by $\sCk$ is guaranteed to be positive at any $x_k$ that is not a first-order stationary point for $\Lscr(\cdot,y_k,\mu_k)$ subject to the bound constraints; see part~(ii) of Lemma~\ref{keylemma1}.  A similar quantity $\Deltait\tilde{q}(s_k;x_k,y_k,\mu_k)$ is also used for the search direction $s_k$.

\begin{algorithm}
  \caption{Cauchy step computation for the Augmented Lagrangian subproblem~\eqref{subprob-al}.}
  \label{alg-sCk}
\begin{algorithmic}[1]
          \Procedure{Cauchy\_AL}{$x_k,y_k,\mu_k,\Theta_k,\eps_k$}
  \State \textbf{restrictions} : $\mu_k > 0$, $\Theta_k > 0$, and $\eps_k \geq 0$.
          \State \textbf{available constant} : $\gamma\in(0,1)$.
                   \State Set $\alphaC_k \gets 1$ and $\sCk \gets P(x_k - \alphaC_k \Grad_x \Lscr(x_k,y_k,\mu_k))-x_k$.\label{temp1}
                   \While{\eqref{c2} is not satisfied}\label{temp2}
   \State Set $\alphaC_k \gets \gamma\alphaC_k$ and $\sCk \gets P(x_k - \alphaC_k \Grad_x\Lscr(x_k,y_k,\mu_k))-x_k$.\label{temp3}
                   \EndWhile
        \State \Return: $(\alphaC_k,\sCk)$
        \EndProcedure
\end{algorithmic}
\end{algorithm}

Our complete algorithm is given as Algorithm~\ref{alg-aal} on page~\pageref{alg-aal}.  In particular, the $k$th iteration proceeds as follows.  Given the $k$th iterate tuple $(x_k,y_k,\mu_k)$, the algorithm first determines whether the first-order primal-dual stationarity conditions for \eqref{nep} or the first-order stationarity condition for \eqref{vep} are satisfied.  If either is the case, then the algorithm terminates, but otherwise the method enters the \textbf{while} loop in line~\ref{while-1} to check for stationarity with respect to the AL function.  This loop is guaranteed to terminate finitely; see Lemma~\ref{lem-wd-while-1}.  Next, after computing appropriate trust region radii and Cauchy steps, the method enters a block for computing the steering step $r_k$ and trial step $s_k$.  Through the \textbf{while} loop on line~\ref{while-2}, the overall goal of this block is to compute (approximate) solutions of subproblems \eqref{prob-normal} and \eqref{subprob-al} satisfying
\begin{subequations}\label{conds}
  \begin{align}
    \Deltait\tilde{q}(s_k;x_k,y_k,\mu_k) &\geq \kappa_1\Deltait\tilde{q}(\sCk;x_k,y_k,\mu_k) > 0, \bgap
    l \leq x_k+s_k \leq u, \bgap
    \twonorm{s_k} \leq \TRALk, \label{cond1} \\
    \delmodv(r_k;x_k) &\geq \kappa_2\delmodv(\rCk;x_k) \geq 0, \bgap\qquad\,\hspace{0.1cm}
    l \leq x_k+r_k \leq u, \bgap
    \twonorm{r_k} \leq \TRk, \label{cond2}\\
    \words{and} \delmodv(s_k;x_k) &\geq \min\{\kappa_3\delmodv(r_k;x_k), v_k-\half(\kappa_t t_j)^2\}. \label{cond2b}
  \end{align}
\end{subequations}
In these conditions, the method employs user-provided constants $\{\kappa_1,\kappa_2,\kappa_3, \kappa_t\} \subset(0,1)$ and the algorithmic quantity $t_j > 0$ representing the $j$th constraint violation target.  It should be noted that, for sufficiently small $\mu > 0$, many approximate solutions to \eqref{prob-normal} and \eqref{subprob-al} satisfy~\eqref{conds}, but for our purposes (see Theorem~\ref{thm-wd}) it is sufficient that, for sufficiently small $\mu > 0$, they are at least satisfied by $r_k = \rCk$ and $s_k = \sCk$.  A complete description of the motivations underlying conditions \eqref{conds} can be found in~\cite[Section~3]{curtis12}.  In short, \eqref{cond1} and \eqref{cond2} are Cauchy decrease conditions while~\eqref{cond2b} ensures that the trial step predicts progress toward constraint satisfaction, or at least predicts that any increase in constraint violation is limited (when the right-hand side is negative).

\begin{algorithm}
  \caption{Adaptive Augmented Lagrangian Line Search Algorithm}
  \label{alg-aal}
  \begin{algorithmic}[1]
    \State Choose $\{\gamma,\gamma_\mu,\gamma_\alpha,\gamma_t,\gamma_T,\kappa_1,\kappa_2,\kappa_3,\eps_r,\kappa_t,\etasuccessful,\etaverysuccessful\} \subset (0,1)$ and $\{\delta,\epsilon,Y\} \subset (0,\infty)$ such that $\etaverysuccessful \geq \etasuccessful$.
    \State Choose initial primal-dual pair $(x_0,y_0)$ and initialize $\{\mu_0,\delta_0,t_0,t_1,T_1,Y_1\} \subset (0,\infty)$ such that $Y_1 \geq Y$ and $\twonorm{y_0} \leq Y_1$.
    \State Set $k \gets 0$, $k_0 \gets 0$, and $j \gets 1$.
    \Loop
      \If{$F\sub{OPT}(x_k,y_k) = 0$,}\label{term-kkt-m1}
        \State\label{term-kkt} \Return the first-order stationary solution $(x_k,y_k)$.
      \EndIf
      \If{$\twonorm{c_k} > 0$ and $F\sub{FEAS}(x_k) = 0$,}\label{term-isp-m1}
        \State\label{term-isp} \Return the infeasible stationary point $x_k$.
      \EndIf
      \While{$F\sub{AL}(x_k,y_k,\mu_k) = 0$,}\label{while-1}
        \State\label{mu-dec-1} Set $\mu_k\gets\gamma_\mu\mu_k$.
      \EndWhile
      \State Define $\TRk$ by~\eqref{sandy}.
      \State Use Algorithm~\ref{alg-rCk} to compute $(\betaC_k,\rCk,\eps_k,\Gamma_k) = \text{\sc Cauchy\_feasibility}(x_k,\theta_k)$.\label{line-rCk}
      \State Define $\TRALk$ by~\eqref{sandy-AL}.
      \State Use Algorithm~\ref{alg-sCk} to compute $(\alphaC_k,\sCk) = \text{\sc Cauchy\_AL}(x_k,y_k,\mu_k,\Theta_k,\eps_k)$.\label{line-sCk-1} 
      \State Compute approximate solutions $r_k$ to~\eqref{prob-normal} and $s_k$ to \eqref{subprob-al} that satisfy~\eqref{cond1}--\eqref{cond2}.
      \While{\eqref{cond2b} is not satisfied or $F\sub{AL}(x_k,y_k,\mu_k) = 0$,}\label{while-2}
        \State Set $\mu_k\gets\gamma_\mu\mu_k$ and define $\TRALk$ by~\eqref{sandy-AL}.\label{steering-decrease}
        \State Use Algorithm~\ref{alg-sCk} to compute $(\alphaC_k,\sCk) = \text{\sc Cauchy\_AL}(x_k,y_k,\mu_k,\Theta_k,\eps_k)$.\label{line-sCk-2} 
        \State Compute an approximate solution $s_k$ to~\eqref{subprob-al} satisfying~\eqref{cond1}.
      \EndWhile
      \State\label{line-alpha} Set $\alpha_k \gets \gamma_\alpha^l$ where $l \geq 0$ is the smallest integer satisfying~\eqref{dec-L-ls}.
      \State\label{stepx-k+1} Set $\xnext \gets x_k + \alpha_k s_k$.
      \If{$\twonorm{c_{k+1}} \leq t_j$,}\label{line-c-small}
        \State\label{line-y} Compute any $\yhat_{k+1}$ satisfying \eqref{new-y}.
        \If{$\min\{\twonorm{F\sub{L}(x_{k+1},\yhat_{k+1})}, \twonorm{F\sub{AL}(x_{k+1},y_k,\mu_k)}\} \leq T_j$,}\label{crunchy}  
          \State\label{line-kj} Set $k_j \gets k+1$ and $Y_{j+1} \gets \max\{Y, t_{j-1}^{-\epsilon}\}$.
          \State\label{aaahhh} Set $t_{j+1} \gets \min\{\gamma_t t_j,t_j^{1+\epsilon}\}$ and $T_{j+1} \gets \gamma_T T_j$.
          \State Set $y_{k+1}$ from~\eqref{y-combo} where $\alpha_y$ satisfies \eqref{new-y-2}.\label{line-y-bd}
          \State Set $j\gets j+1$.
        \Else
          \State Set $y_{k+1} \gets y_k$.
        \EndIf
      \Else
        \State Set $y_{k+1} \gets y_k$.
      \EndIf
      \State Set $\mu_{k+1} \gets \mu_k$.
      \State Set $k\gets k+1$.
    \EndLoop
  \end{algorithmic}
\end{algorithm}

With the search direction $s_k$ in hand, the method proceeds to perform a backtracking line search along the strict descent direction $s_k$ for $\Lscr(\cdot,y_k,\mu_k)$ at $x_k$.  Specifically, for a given $\gamma_\alpha\in(0,1)$, the method computes the smallest integer $l\geq 0$ such that
\begin{equation} \label{dec-L-ls}
  \Lscr(x_k + \gamma_\alpha^l s_k,y_k,\mu_k) \leq \Lscr(x_k,y_k,\mu_k) - \etasuccessful\gamma_\alpha^l\Deltait\widetilde{q}(s_k;x_k,y_k,\mu_k),
\end{equation}
and then sets $\alpha_k \gets \gamma_\alpha^l$ and $\xnext \gets x_k + \alpha_k s_k$.  The remainder of the iteration is then composed of potential modifications of the Lagrange multiplier vector and target values for the accuracies in minimizing the constraint violation measure and AL function subject to the bound constraints.  First, the method checks whether the constraint violation at the next primal iterate $\xnext$ is sufficiently small compared to the target $t_j > 0$.  If this requirement is met, then a multiplier vector $\yhat_{k+1}$ that satisfies
\begin{equation}\label{new-y}
  \twonorm{F\sub{L}(x_{k+1},\yhat_{k+1})} \leq \min\left\{\twonorm{F\sub{L}(x_{k+1},y_k)},\twonorm{F\sub{L}(x_{k+1},\pi(x_{k+1},y_k,\mu_k))} \right\}
\end{equation}
is computed. Two obvious potential choices for $\yhat_{k+1}$ are $y_k$ and $\pi(x_{k+1},y_k,\mu_k)$, but another viable candidate would be an approximate least-squares multiplier estimate (which may be computed via a linearly constrained optimization subproblem).  The method then checks if either $\twonorm{F\sub{L}(x_{k+1}, \yhat_{k+1})}$ or $\twonorm{F\sub{AL}(x_{k+1}, y_k, \mu_k)}$ is sufficiently small with respect to the target value $T_j > 0$.  
If so, then new target values $t_{j+1} < t_j$ and $T_{j+1} < T_j$ are set, 
$Y_{j+1} \geq Y_j$ is chosen, and a new Lagrange multiplier vector is set as
\begin{equation}\label{y-combo}
  y_{k+1} \gets (1-\alpha_y)y_k + \alpha_y \yhat_{k+1},
\end{equation}
where $\alpha_y$ is the largest value in $[0,1]$ such that
\begin{equation} \label{new-y-2}
  \twonorm{(1-\alpha_y)y_k + \alpha_y \yhat_{k+1}} \leq Y_{j+1}.
\end{equation}
This updating procedure is well-defined since the choice $\alpha_y \gets 0$ results in $y_{k+1} \gets y_k$, for which \eqref{new-y-2} is satisfied 
since $\|y_k\|_2 \leq Y_j \leq Y_{j+1}$.  If either line~\ref{line-c-small} or line \ref{crunchy} in Algorithm~\ref{alg-aal} tests false, then the method simply sets $y_{k+1} \gets y_k$.  We note that unlike more traditional augmented Lagrangian approaches \cite{AndBMS08,ConGT91c}, the penalty parameter is not adjusted on the basis of a test like that on line~\ref{line-c-small}, but instead relies on our steering procedure.  Moreover, in our approach we decrease the target values at a linear rate for simplicity, but more sophisticated approaches may be used \cite{ConGT91c}.


\subsection{Well-posedness and global convergence}

In this section, we state two vital results, namely that 
Algorithm~\ref{alg-aal} is well posed, and that limit points of the iterate sequence have desirable 
properties. 
\iftoggle{arxivpaper}{
Proofs of these results, which are similar to those in \cite{curtis12}, are given in Appendices A and B.
}{
Vital components of these results are given in Appendices A and B.  (The proofs of these results are similar to the corresponding results in \cite{curtis12}; for reference, complete details can be found in \cite{CJJR-arxiv}.)
}
In order to show well-posedness of the algorithm, we make the following 
formal assumption.

\begin{assumption} \label{ass-posed0}
  At each given $x_k$, the objective function $f$ and constraint function $c$ are both twice-continuously differentiable.
\end{assumption}

Under this assumption, we have the following theorem.

\begin{theorem} \label{thm-wd}
Suppose that Assumption~\ref{ass-posed0} holds.
Then the $k$th iteration of Algorithm~\ref{alg-aal} is well posed.  That is, either the algorithm will terminate in line~\ref{term-kkt} or~\ref{term-isp}, or it will compute $\mu_k > 0$ such that $F\sub{AL}(x_k,y_k,\mu_k) \neq 0$ and for the steps $s_k = \sCk$ and $r_k = \rCk$ the conditions in~\eqref{conds} will be satisfied, in which case $(x_{k+1},y_{k+1},\mu_{k+1})$ will be computed.
\end{theorem}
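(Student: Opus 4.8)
The plan is to walk through the $k$th iteration and confirm that every loop and subroutine it invokes terminates finitely with well-defined output, and that the concluding multiplier and target updates are well-posed. Accordingly, suppose the iteration does not return in line~\ref{term-kkt} or line~\ref{term-isp}, so that $F\sub{OPT}(x_k,y_k)\neq0$ and it is not the case that both $\twonorm{c_k}>0$ and $F\sub{FEAS}(x_k)=0$. All of the well-posedness facts below are the line-search analogues of results proved for the trust region method in~\cite{curtis12}, and I would organize the proof around the lemmas already referenced above.

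First I would dispose of the \textbf{while} loop in line~\ref{while-1}. Since $\Grad_x\Lscr(x_k,y_k,\mu)=\mu(g_k-J_k\T y_k)+J_k\T c_k$, continuity of $P$ gives $F\sub{AL}(x_k,y_k,\mu)\to P(x_k-J_k\T c_k)-x_k=F\sub{FEAS}(x_k)$ as $\mu\downarrow0$. If $c_k\neq0$ then $F\sub{FEAS}(x_k)\neq0$ (else the iteration returns in line~\ref{term-isp}), so the loop condition fails after finitely many contractions $\mu_k\gets\gamma_\mu\mu_k$; if $c_k=0$ then $F\sub{L}(x_k,y_k)\neq0$, and because the projected-gradient residual $P(x_k-t\,\Grad_x\ell(x_k,y_k))-x_k$ is nonzero for some $t>0$ only if it is nonzero for all $t>0$, we get $F\sub{AL}(x_k,y_k,\mu)=P(x_k-\mu\,\Grad_x\ell(x_k,y_k))-x_k\neq0$ for every $\mu>0$, so the loop is never entered. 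Either way it leaves $\mu_k>0$ with $F\sub{AL}(x_k,y_k,\mu_k)\neq0$ (Lemma~\ref{lem-wd-while-1}). Next I would check that Algorithms~\ref{alg-rCk} and~\ref{alg-sCk} are well-defined. In Algorithm~\ref{alg-rCk} the integer $l_k$ exists since $\twonorm{P(x_k-\gamma^l J_k\T c_k)-x_k}\to0$, and the projection estimates $\rCk\T J_k\T c_k\le-\twonorm{\rCk}^2/\betaC_k$ and $\tfrac12\twonorm{J_k\rCk}^2\le\tfrac12\twonorm{J_k}^2\betaC_k(-\rCk\T J_k\T c_k)$ force~\eqref{c1} once $\betaC_k\le2(1-\eps_r)/\twonorm{J_k}^2$, so the inner \textbf{while} loop stops; tracking the updates along the way gives $\eps_k\in[0,\eps_r)$ and $\Gamma_k\in(1,2]$. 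In Algorithm~\ref{alg-sCk}, $F\sub{AL}(x_k,y_k,\mu_k)\neq0$ forces $\Grad_x\Lscr(x_k,y_k,\mu_k)\neq0$, and since the clipped curvature term in $\tilde{q}$ lies between $0$ and $\tfrac12\twonorm{\mu_k\Hess_{xx}\ell(x_k,y_k)+J_k\T J_k}\twonorm{s}^2$, the same projection argument---now using $\tfrac12(\eps_k+\eps_r)<1$---forces~\eqref{c2} once $\alphaC_k$ is small enough. These facts, together with the strict positivity of $\delmodv(\rCk;x_k)$ and $\Deltait\tilde{q}(\sCk;x_k,y_k,\mu_k)$ away from stationarity, are what Lemma~\ref{keylemma1} records; and since $\kappa_1,\kappa_2\in(0,1)$, the choices $r_k=\rCk$ and $s_k=\sCk$ automatically satisfy~\eqref{cond1} and~\eqref{cond2}.

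The heart of the argument---and the step I expect to be the main obstacle---is the finiteness of the steering \textbf{while} loop in line~\ref{while-2}: I must show that for all sufficiently small $\mu_k>0$ the pair $(r_k,s_k)=(\rCk,\sCk)$ satisfies~\eqref{cond2b} while $F\sub{AL}(x_k,y_k,\mu_k)\neq0$, the latter being handled exactly as for line~\ref{while-1}. For~\eqref{cond2b}, the idea is that as $\mu_k\downarrow0$ one has $\Grad_x\Lscr(x_k,y_k,\mu_k)\to J_k\T c_k=\Grad_x v(x_k)$ and $\TRALk=\Gamma_k\delta\twonorm{F\sub{AL}(x_k,y_k,\mu_k)}\to\Gamma_k\delta\twonorm{F\sub{FEAS}(x_k)}=\Gamma_k\TRk$, so by continuity of the projected-gradient Cauchy construction in these data the step $\sCk$ approaches a Cauchy step for $v$ computed with a trust-region radius that is $\Gamma_k>1$ times the radius $\TRk$ used for $\rCk$; hence $\delmodv(\sCk;x_k)$ is eventually at least $\kappa_3\,\delmodv(\rCk;x_k)$ (using $\kappa_3<1$), while in the degenerate case $\TRk=0$ (so $c_k=0$) one instead checks directly that $\delmodv(\sCk;x_k)\to0>v_k-\tfrac12(\kappa_t t_j)^2$, so in either case the right-hand side of~\eqref{cond2b} is met. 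This limiting analysis, and the roles of $\Gamma_k$ in~\eqref{sandy-AL} and $\eps_k$ in~\eqref{c2} that make it go through, are precisely the content of Lemmas~\ref{lem-algo} and~\ref{lem-models}, which I would invoke here; the key bookkeeping observation is that $\eps_k$, $\Gamma_k$ and $\TRk$ are fixed before the loop (depending only on $x_k$), so only the $\mu_k$-dependence must be tracked. On exit, $F\sub{AL}(x_k,y_k,\mu_k)\neq0$ and the $(r_k,s_k)$ used by the algorithm satisfy all of~\eqref{conds}.

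It remains to close the iteration. From~\eqref{cond1} and the bound $\tilde{q}(0;x_k,y_k,\mu_k)-\tilde{q}(s_k;x_k,y_k,\mu_k)\le-s_k\T\Grad_x\Lscr(x_k,y_k,\mu_k)$ (the clipped curvature term being nonnegative), we get $s_k\T\Grad_x\Lscr(x_k,y_k,\mu_k)\le-\Deltait\tilde{q}(s_k;x_k,y_k,\mu_k)<0$, so $s_k$ is a strict descent direction for $\Lscr(\cdot,y_k,\mu_k)$ at $x_k$; since $\etasuccessful\in(0,1)$ and, by Assumption~\ref{ass-posed0}, $\Lscr(\cdot,y_k,\mu_k)$ is continuously differentiable, a first-order Taylor expansion of $\Lscr(x_k+\gamma_\alpha^l s_k,y_k,\mu_k)$ shows~\eqref{dec-L-ls} holds for all sufficiently large $l$, so the backtracking search in line~\ref{line-alpha} is finite and $\xnext=x_k+\alpha_k s_k$ is produced. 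The remaining updates are unconditionally well-defined: $\yhat_{k+1}=y_k$ is an admissible choice in~\eqref{new-y}, and $\alpha_y=0$---giving $y_{k+1}=y_k$, for which $\twonorm{y_k}\le Y_j\le Y_{j+1}$---always satisfies~\eqref{new-y-2}, so the block of lines updating $y$, $t_j$, $T_j$ and $Y_j$ runs without obstruction. Hence $(x_{k+1},y_{k+1},\mu_{k+1})$ is computed, which is what the theorem asserts.
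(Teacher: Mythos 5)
Your proposal is correct and follows essentially the same route as the paper's proof: it disposes of the \textbf{while} loop on line~\ref{while-1} via Lemma~\ref{lem-wd-while-1}, observes that \eqref{cond1}--\eqref{cond2} hold automatically for the Cauchy steps (Lemmas~\ref{lem-algo} and~\ref{keylemma1}), terminates the steering loop using the $\mu\to0$ limits of Lemma~\ref{lem-models} as packaged in Lemma~\ref{keylemma2}, and finishes with strict descent (Lemma~\ref{lem-is-descent}) to make the backtracking finite. The one imprecision is your heuristic description of $\lim_{\mu\to 0}\sCk(\mu)$ as a Cauchy step for $v$ with the enlarged radius $\Gamma_k\TRk$ --- the limit is exactly $\rCk$ itself, which is precisely what the $\Gamma_k$ and $\eps_k$ bookkeeping in Lemma~\ref{lem-models} is engineered to guarantee --- but since you defer to that lemma for the details, the argument closes.
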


According to Theorem~\ref{thm-wd}, we have that Algorithm~\ref{alg-aal} will either terminate finitely or produce an infinite sequence of iterates.  If it terminates finitely---which can only occur if line~\ref{term-kkt} or~\ref{term-isp} is executed---then the algorithm has computed a first-order stationary solution or an infeasible stationary point and there is nothing else to prove about the algorithm's performance in such cases.  Therefore, it remains to focus on the global convergence properties of Algorithm~\ref{alg-aal} under the assumption that the sequence $\{(x_k,y_k,\mu_k)\}$ is infinite.  For such cases, we make the following additional assumption.

\begin{assumption} \label{ass-0}
  The primal sequences $\{x_k\}$ and $\{x_k+s_k\}$ are contained in a convex compact set over which the objective function $f$ and constraint function $c$ are both twice-continuously differentiable. 
\end{assumption}



Our main global convergence result for Algorithm~\ref{alg-aal} is as follows.

\begin{theorem} \label{thm-main}
  Suppose that Assumptions~\ref{thm-wd} and \ref{ass-0} hold. 
  Then one of the following must hold:
  \begin{itemize}
    \item[(i)] every limit point $\xstar$ of $\{x_k\}$ is an infeasible stationary point;
    \item[(ii)] $\mu_k \nrightarrow 0$ and there exists an infinite ordered set $\Kscr \subseteq \mathbb{N}$ such that every limit point of $\{(x_k,\yhat_k)\}_{k\in\Kscr}$ is first-order stationary for~\eqref{nep}; or
  \item[(iii)] $\mu_k \to 0$, every limit point of $\{x_k\}$ is feasible, and if there exists a positive integer $p$ such that $\mu_{k_j-1} \geq \gamma_\mu^p \mu_{k_{j-1}-1}$ for all sufficiently large $j$, then there exists an infinite ordered set $\Jscr \subseteq \mathbb{N}$ such that any limit point of either 
$\{(x_{k_j},\yhat_{k_j})\}_{j\in\Jscr}$ 
or
$\{(x_{k_j},y_{k_j-1})\}_{j\in\Jscr}$
is first-order stationary for \eqref{nep}.
  \end{itemize}
\end{theorem}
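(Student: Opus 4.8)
The plan is to adapt the global-convergence analysis of~\cite{curtis12} to the present line-search/convexified-model setting. Throughout, the compactness and twice-continuous differentiability in Assumption~\ref{ass-0} keep $\{x_k\}$, $\{x_k+s_k\}$, the derivatives of $f$ and $c$, the Cauchy steps $\rCk,\sCk$, the steps $r_k,s_k$, and the model reductions $\delmodv(\cdot\,;x_k)$ and $\Deltait\tilde{q}(\cdot\,;x_k,y_k,\mu_k)$ bounded over the relevant sets; this, with Theorem~\ref{thm-wd}, Lemma~\ref{keylemma1}, and the Cauchy-decrease estimates behind Lemmas~\ref{lem-algo} and~\ref{lem-models}, supplies the quantitative bounds used below. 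Two elementary facts drive everything. First, $\{\mu_k\}$ is monotonically nonincreasing and takes values in $\{\mu_0\gamma_\mu^i:i\ge 0\}$, so either $\mu_k\to0$ or $\mu_k$ is eventually a fixed $\bar\mu>0$. Second, over any run of iterations on which $(y_k,\mu_k)$ is constant, the Armijo condition~\eqref{dec-L-ls} and boundedness below of $\Lscr(\cdot\,,y_k,\mu_k)$ force $\alpha_k\,\Deltait\tilde{q}(s_k;x_k,y_k,\mu_k)\to0$; since the convexifying term $\max\{\cdot,0\}$ in $\tilde{q}$ guarantees $\Deltait\tilde{q}(s_k)\le-\Grad_x\Lscr(x_k,y_k,\mu_k)\T s_k$ (so that $s_k$ is a strict descent direction and the line search is well defined), the standard backtracking argument---handling $\alpha_k\nrightarrow0$ and $\alpha_k\to0$ separately, the latter by a Taylor expansion of a rejected step in~\eqref{dec-L-ls}---then yields $\Deltait\tilde{q}(s_k)\to0$, hence $\Deltait\tilde{q}(\sCk)\to0$ by~\eqref{cond1}, hence $\twonorm{F\sub{AL}(x_k,y_k,\mu_k)}\to0$ along that run.

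I would first dispose of $\mu_k\nrightarrow0$, i.e.\ $\mu_k\equiv\bar\mu$ for $k$ large, splitting on whether the multiplier-update set $\{k_j\}$ is infinite. If it is, set $\Kscr:=\{k_j\}$; then $\twonorm{c_{k_j}}\le t_j\to0$, so every limit point of $\{x_{k_j}\}$ is feasible, while line~\ref{crunchy} gives $\min\{\twonorm{F\sub{L}(x_{k_j},\yhat_{k_j})},\twonorm{F\sub{AL}(x_{k_j},y_{k_j-1},\bar\mu)}\}\le T_j\to0$. Since $\pi(x_{k_j},y_{k_j-1},\bar\mu)=y_{k_j-1}-\tfrac{1}{\bar\mu}c_{k_j}$ (recall~\eqref{def-pi}) and $c_{k_j}\to0$, in either branch feasibility together with the continuity of $F\sub{L}$, $F\sub{AL}$ and the box projection implies that every limit point of $\{(x_k,\yhat_k)\}_{k\in\Kscr}$ satisfies~\eqref{F-zero}---alternative~(ii). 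If instead $\{k_j\}$ is finite, then $y_k\equiv\bar y$ for $k$ large, so the second fact gives $\twonorm{F\sub{AL}(x_k,\bar y,\bar\mu)}\to0$; there can be no feasible limit point, because along a subsequence converging to one the tests on lines~\ref{line-c-small} and~\ref{crunchy} would eventually both succeed and force a further update. Hence $\twonorm{c_k}>t_j$ for $k$ large and every limit point $\xstar$ has $v(\xstar)\ge\half t_j^2>0$; then, as in the corresponding argument of~\cite{curtis12}, the second argument of the minimum in~\eqref{cond2b} is bounded below by a positive constant, so~\eqref{cond2}--\eqref{cond2b}, part~(i) of Lemma~\ref{keylemma1}, and the definition~\eqref{sandy} of $\TRk$ force $\twonorm{F\sub{FEAS}(x_k)}\to0$, making every limit point an infeasible stationary point---alternative~(i).

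It remains to treat $\mu_k\to0$. The first task is to show every limit point of $\{x_k\}$ is feasible. Arguing as in~\cite{curtis12}: if some limit point $\xstar$ had $v(\xstar)>0$, then near $\xstar$ either $F\sub{FEAS}(x_k)$ stays bounded away from $0$---so $\delmodv(\rCk;x_k)$ is bounded away from $0$ by part~(i) of Lemma~\ref{keylemma1} and~\eqref{sandy}, whence for all $\mu$ below a $k$-independent threshold the choices $r_k=\rCk$, $s_k=\sCk$ satisfy~\eqref{conds} (the well-posedness mechanism of Theorem~\ref{thm-wd}) and the loop on line~\ref{while-2} does not decrease $\mu$ there---or $F\sub{FEAS}(x_k)\to0$ along the subsequence, making $\xstar$ an infeasible stationary point near which, again, $\mu$ need not be driven to $0$; either way $\mu_k\to0$ is contradicted. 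Thus all limit points are feasible, $\twonorm{c_k}\to0$, the targets satisfy $t_j\to0$, and so $\{k_j\}$ is infinite. Finally, under the extra hypothesis that $\mu_{k_j-1}\ge\gamma_\mu^{\,p}\mu_{k_{j-1}-1}$ for some fixed $p$ and all large $j$, I would pass to the limit in the line~\ref{crunchy} inequality $\min\{\twonorm{F\sub{L}(x_{k_j},\yhat_{k_j})},\twonorm{F\sub{AL}(x_{k_j},y_{k_j-1},\mu_{k_j-1})}\}\le T_j\to0$: this hypothesis bounds the decay of $\mu_{k_j-1}$ relative to $T_j$ (and to $t_j\ge\twonorm{c_{k_j}}$), which is precisely what converts the residual along whichever branch is active infinitely often into a vanishing $F\sub{L}$-residual; using $\twonorm{c_{k_j}}\to0$, the fact~\eqref{new-y} that $\yhat_{k_j}$ is at least as good as $\pi(x_{k_j},y_{k_j-1},\mu_{k_j-1})$, and continuity, one then concludes that any limit point of $\{(x_{k_j},\yhat_{k_j})\}$ or of $\{(x_{k_j},y_{k_j-1})\}$ satisfies~\eqref{F-zero}---alternative~(iii).

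I expect the main obstacle to be this last step: extracting true first-order stationarity for~\eqref{nep} from the vanishing of the \emph{augmented-Lagrangian} residual $F\sub{AL}(x_{k_j},y_{k_j-1},\mu_{k_j-1})$ while $\mu_{k_j-1}\to0$. Because $F\sub{AL}$ is built from the shifted estimate $\pi=y-\tfrac{1}{\mu}c$, crude bounds involving $c/\mu$ diverge, and the argument must instead coordinate (1)~the target bound $\twonorm{c_{k_j}}\le t_j$, (2)~the safeguard~\eqref{new-y-2} and the $\epsilon$-driven updates of $t_{j+1},T_{j+1},Y_{j+1}$, and (3)~the hypothesis relating consecutive $\mu_{k_j-1}$, so as to keep the relevant multiplier estimates and scaled residuals under control and pass cleanly to a limit; keeping careful track of which branch of the minimum in line~\ref{crunchy} is active infinitely often, and of which of the two candidate multiplier sequences $\{\yhat_{k_j}\}$, $\{y_{k_j-1}\}$ to follow, is the delicate bookkeeping. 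The remaining ingredients---the monotonicity and summability underlying the line search, the Cauchy-decrease inequalities of Lemma~\ref{keylemma1} and Lemmas~\ref{lem-algo} and~\ref{lem-models}, and compactness from Assumption~\ref{ass-0}---are routine.
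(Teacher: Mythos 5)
Your decomposition of the $\mu_k \nrightarrow 0$ branch into the sub-cases $|\Yscr|=\infty$ and $|\Yscr|<\infty$ matches the paper's Lemmas~\ref{lem-good} and~\ref{cartoons} and is essentially correct. The genuine gap is in your treatment of $\mu_k \to 0$: you claim that $\mu_k\to0$ forces every limit point of $\{x_k\}$ to be feasible, arguing that an infeasible limit point would mean ``$\mu$ need not be driven to $0$'' and hence contradict $\mu_k\to0$. That dichotomy is false, and the case you thereby erase---$\mu_k\to0$ together with $|\Yscr|<\infty$---is a real outcome of the algorithm (it is the expected behaviour on infeasible instances) and is covered separately by the paper's Lemma~\ref{lem-26}, where the conclusion is alternative~(i), not~(iii). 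Your local argument only shows that the steering loop at line~\ref{while-2} need not decrease $\mu$ at iterates near a particular limit point; it does not preclude $\mu$ being decreased elsewhere (for instance at line~\ref{mu-dec-1}, at iterations along other subsequences, or because the threshold ``$\mu$ sufficiently small'' in Lemma~\ref{lem-models} is $k$-dependent), so no contradiction with $\mu_k\to0$ follows.

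The route your proposal is missing goes through the summability estimates of Lemma~\ref{lem-sums}: when $\mu_k\to0$, Dirichlet's test applied to the telescoping sums $\sum\mu_k(f_k-f_{k+1})$ and $\sum\mu_k y_k^T(c_{k+1}-c_k)$ shows that $\twonorm{c_k}$ converges to some $\cbar\ge0$. If $|\Yscr|=\infty$, then $\twonorm{c_{k_j}}\le t_j\to0$ forces $\cbar=0$; this is how feasibility of all limit points is actually obtained in case~(iii), not via a steering argument. If $|\Yscr|<\infty$, one instead shows $\cbar>0$ (otherwise $c_k\to0$ and $\mu_k\to0$ would make $F\sub{AL}$ and $F\sub{FEAS}$ vanish and trigger a forbidden multiplier update) and then $F\sub{FEAS}(x_k)\to0$ via the Armijo decrease, the step-size lower bound of Lemma~\ref{lem-alphabound}, and the Cauchy decrease \eqref{decrease-AL}, yielding alternative~(i). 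So the proof requires the four-way split on $(\mu_k\to0$ or not$)\times(|\Yscr|$ finite or infinite$)$ rather than your three cases. Your remarks on the remaining pieces---the descent and backtracking mechanism, the use of $t_j,T_j\to0$, and the delicacy of extracting $F\sub{L}\to0$ from $F\sub{AL}\to0$ when $\mu_{k_j-1}\to0$ in case~(iii)---are consistent with the paper's Lemmas~\ref{lem-good} and~\ref{lem-29}.
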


Observe that the conclusions are exactly the same as in \cite[Theorem~3.14]{curtis12}.  We also call the readers attention to the comments following \cite[Theorem~3.14]{curtis12}, which discuss the consequences of these results.  In particular, these comments suggest how Algorithm~\ref{alg-aal} may be modified to guarantee convergence to first-order stationary points, even in case~(iii) of the theorem.  However, as mentioned in \cite{curtis12}, we do not consider these modifications to the algorithm to have practical benefits. This perspective is supported by the numerical tests presented in the following section.

\section{Numerical Experiments} \label{sec-numerical}

In this section, we provide evidence that steering can have a positive
effect on the performance of AL algorithms.  To best illustrate the
influence of steering, we implemented and tested algorithms in two
pieces of software.  First, in \Matlab{}, we implemented our adaptive AL
line search algorithm, i.e., Algorithm~\ref{alg-aal}, and the adaptive
AL trust region method given as~\cite[Algorithm~4]{curtis12}.  Since
these methods were implemented from scratch, we had control over every
aspect of the code, which allowed us to implement all features described
in this paper and in \cite{curtis12}.  Second, we implemented a simple
modification of the AL trust region algorithm in the \lancelot{}
software package~\cite{ConGT92a}.  Our only modification to \lancelot{}
was to incorporate a basic form of steering; i.e., we did not change
other aspects of \lancelot{}, such as the mechanisms for triggering a
multiplier update.  In this manner, we were also able to isolate the
effect that steering had on numerical performance, though it should be
noted that there were differences between Algorithm~\ref{alg-aal} and
our implemented algorithm in \lancelot{} in terms of, e.g., the
multiplier updates.

While we provide an extensive amount of information about the results of our experiments in this section, further information can be found in~\iftoggle{arxivpaper}{Appendix~\ref{app-results}.}{\cite[Appendix~\ref{app-results}]{CJJR-arxiv}.}

\subsection{\Matlab{} implementation} \label{sec:matlab}

\subsubsection{Implementation details} \label{sec-implement-matlab}

Our \Matlab{} software was comprised of six algorithm variants.  The
algorithms were implemented as part of the same package so that most of
the algorithmic components were exactly the same; the primary
differences related to the step acceptance mechanisms and the manner in
which the Lagrange multiplier estimates and penalty parameter were
updated.  First, for comparison against algorithms that utilized our
steering mechanism, we implemented line search and trust region variants
of a basic augmented Lagrangian method, given as
\cite[Algorithm~1]{curtis12}.  We refer to these algorithms as
\balls\ (\textbf{b}asic \textbf{a}ugmented \textbf{L}agrangian,
\textbf{l}ine \textbf{s}earch) and \baltr\ (\textbf{t}rust
\textbf{r}egion), respectively.  These algorithms clearly differed in
that one used a line search and the other used a trust region strategy
for step acceptance, but the other difference was that, like
Algorithm~\ref{alg-aal} in this paper, \balls\ employed a convexified
model of the AL function.  (We discuss more details about the use of
this convexified model below.)  The other algorithms implemented in our
software included two variants of Algorithm~\ref{alg-aal} and two
variants of~\cite[Algorithm~4]{curtis12}.  The first variants of each,
which we refer to as \aalls\ and \aaltr\ (\textbf{a}daptive, as opposed
to \textbf{b}asic), were straightforward implementations of these algorithms,
whereas the latter variants, which we refer to as \aallssafe\ and
\aaltrsafe, included an implementation of a safeguarding procedure for
the steering mechanism.  The safeguarding procedure will be described in
detail shortly.

The main per-iteration computational expense for each algorithm variant
can be attributed to the search direction computations.  For computing a
search direction via an approximate solve of \eqref{subprob-al} or
\cite[Prob.~(3.8)]{curtis12}, all algorithms essentially used the same
procedure.  For simplicity, all algorithms considered variants of these
subproblems in which the $\ell_2$-norm trust region was replaced by an
$\ell_\infty$-norm trust region so that the subproblems were
bound-constrained.  (The same modification was used in the Cauchy step
calculations.)  Then, starting with the Cauchy step as the initial solution estimate and defining the initial working set by the bounds
identified as active by the Cauchy step, a projected conjugate gradient
(PCG) method was used to compute an improved solution.  During the PCG
routine, if a new trial solution violated a bound constraint that was
not already part of the working set, then this bound was added to the
working set and the PCG routine was reinitialized.  By contrast,
if the reduced subproblem (corresponding to the current working set) was
solved sufficiently accurately, then a check for termination was
performed.  In particular, multiplier estimates were computed for the
working set elements.  If these multiplier estimates were all
nonnegative (or at least larger than a small negative number), then the subproblem was
deemed to be solved and the routine terminated.  Otherwise, an element
corresponding to the most negative multiplier estimate was removed from
the working set and the PCG routine was reinitialized.  We do not claim
that the precise manner in which we implemented this approach guaranteed
convergence to an exact solution of the subproblem.  However, the
approach just described was based on well-established methods for
solving bound-constrained quadratic optimization problems (QPs), and we
found that it worked very well in our experiments.  It should be noted that if, at any time, negative curvature was encountered in the PCG routine, then the solver terminated with the current PCG iterate.  In this manner, the solutions were generally less accurate when negative curvature was encountered, but we claim that this did not have too adverse an effect on the performance of any of the algorithms.

A few additional comments are necessary to describe our search direction
computation procedures.  First, it should be noted that for the line
search algorithms, the Cauchy step calculation in
Algorithm~\ref{alg-sCk} was performed with \eqref{c2} as stated (i.e.,
with $\tilde{q}$), but the above PCG routine to compute the search
direction was applied to \eqref{subprob-al} \emph{without} the
convexification for the quadratic term.  However, we claim that this
choice remains consistent with the stated algorithms since, for all
algorithm variants, we performed a sanity check after the computation of
the search direction.  In particular, the reduction in the model of the
AL function yielded by the search direction was compared against that
yielded by the corresponding Cauchy step.  If the Cauchy step actually
provided a better reduction in the model, then the computed search
direction was replaced by the Cauchy step.  In this sanity check for the
line search algorithms, we computed the model reductions \emph{with} the
convexification of the quadratic term (i.e., with $\tilde{q}$), which
implies that, overall, our implemented algorithm guaranteed Cauchy
decrease in the appropriate model for all algorithms.  Second, we remark
that for the algorithms that employed a steering mechanism, we did not
employ the same procedure to approximately solve \eqref{prob-normal} or
\cite[Prob.~(3.4)]{curtis12}.  Instead, we simply used the Cauchy steps
as approximate solutions of these subproblems.  Finally, we note that in
the steering mechanism, we checked condition~\eqref{cond2b} with the
Cauchy steps for each subproblem, despite the fact that the search
direction was computed as a more accurate solution of \eqref{subprob-al}
or \cite[Prob.~(3.8)]{curtis12}.  This had the effect that the
algorithms were able to modify the penalty parameter via the steering
mechanism prior to computing the search direction; only Cauchy steps for
the subproblems were needed for steering.

Most of the other algorithmic components were implemented similarly to
the algorithm in \cite{curtis12}.  As an example, for the computation of
the estimates $\{\widehat{y}_{k+1}\}$ (which are required to
satisfy~\eqref{new-y}), we checked whether
$\twonorm{F\sub{L}(x_{k+1},\pi(x_{k+1},y_k,\mu_k))} \leq
\twonorm{F\sub{L}(x_{k+1},y_k)}$; if so, then we set $\widehat{y}_{k+1}
\gets \pi(x_{k+1},y_k,\mu_k)$, and otherwise we set $\widehat{y}_{k+1}
\gets y_k$.  Furthermore, for prescribed tolerances
$\{\kappa\sub{opt},\kappa\sub{feas},\mu\sub{min}\} \subset (0,\infty)$,
we terminated an algorithm with a declaration that a stationary point
was found if
\begin{equation}\label{terminate1}
  \|F\sub{L}(x_k,y_k)\|_\infty \leq \kappa\sub{opt}\ \ \text{and}\ \ \|c_k\|_\infty \leq \kappa\sub{feas},
\end{equation}
and terminated with a declaration that an infeasible stationary point was found if
\begin{equation}\label{terminate2}
  \|F\sub{FEAS}(x_k)\|_\infty \leq \kappa\sub{opt},\ \ \|c_k\|_\infty > \kappa\sub{feas},\ \ \text{and}\ \ \mu_k \leq \mu\sub{min}.
\end{equation}
As in \cite{curtis12}, this latter set of conditions shows that we did
not declare that an infeasible stationary point was found unless the
penalty parameter had already been reduced below a prescribed tolerance.
This helps in avoiding premature termination when the algorithm could
otherwise continue and potentially find a point
satisfying~\eqref{terminate1}, which was always the preferred outcome.
Each algorithm terminated with a message of failure if neither
\eqref{terminate1} nor \eqref{terminate2} was satisfied within
$k\sub{max}$ iterations.  It should also be noted that the problems were
pre-scaled so that the $\ell_\infty$-norms of the gradients of the problem functions at
the initial point would be less than or equal to a prescribed constant
$G > 0$.  The values for all of these parameters, as well as other input
parameter required in the code, are summarized in
Table~\ref{tab.params}.  (Values for parameters related to updating the
trust region radii required by~\cite[Algorithm~4]{curtis12} were set as
in \cite{curtis12}.)

\begin{table}[h]
  \caption{Input parameter values used in our \Matlab{} software.}
  \label{tab.params}
  \centering
  \begin{tabular}{|cc|cc|cc|cc|}
    \hline
    \strt Parameter & Value & Parameter & Value & Parameter & Value & Parameter & Value \\
    \hline
    \strt$\gamma$     & $0.5$ & $\kappa_2$       & $1$       & $\etaverysuccessful$ & $0.9$     & $\mu\sub{min}$ & $10^{-8}$ \\
    \strt$\gamma_\mu$ & $0.1$ & $\kappa_3$       & $10^{-4}$ & $\epsilon$           & $0.5$     & $k\sub{max}$   & $10^4$    \\
    \strt$\gamma_t$   & $0.1$ & $\eps_r$         & $10^{-4}$ & $\mu_0$              & $1$       & $G$            & $10^2$    \\
    \strt$\gamma_T$   & $0.1$ & $\kappa_t$       & $0.9$     & $\kappa\sub{opt}$    & $10^{-5}$ &                &           \\
    \strt$\kappa_1$   & $1$   & $\etasuccessful$ & $10^{-4}$ & $\kappa\sub{feas}$   & $10^{-5}$ &                &           \\
    \hline                                                                                                            
  \end{tabular}
\end{table}

We close this subsection with a discussion of some additional differences between the algorithms as stated in this paper and in \cite{curtis12} and those implemented in our software.  We claim that none of these differences represents a significant departure from the stated algorithms; we merely made some adjustments to simplify the implementation and to incorporate features that we found to work well in our experiments.  First, while all algorithms use the input parameter $\gamma_\mu$ given in Table~\ref{tab.params} for decreasing the penalty parameter, we decrease the penalty parameter less significantly in the steering mechanism.  In particular, in line~\ref{steering-decrease} of Algorithm~\ref{alg-aal} and line~20 of~\cite[Algorithm~4]{curtis12}, we replace $\gamma_\mu$ with $0.7$.  Second, in the line search algorithms, rather than set the trust region radii as in \eqref{sandy} and \eqref{sandy-AL} where $\delta$ appears as a constant value, we defined a dynamic sequence, call it $\{\delta_k\}$, that depended on the step-size sequence $\{\alpha_k\}$.  In this manner, $\delta_k$ replaced $\delta$ in \eqref{sandy} and \eqref{sandy-AL} for all $k$.  We initialized $\delta_0 \gets 1$.  Then, for all $k$, if $\alpha_k = 1$, then we set $\delta_{k+1} \gets \tfrac53\delta_k$, and if $\alpha_k < 1$, then we set $\delta_{k+1} \gets \tfrac12\delta_k$.  Third, to simplify our implementation, we effectively ignored the imposed bounds on the multiplier estimates by setting $Y \gets \infty$ and $Y_1 \gets \infty$.  This choice implies that we always chose $\alpha_y \gets 1$ in \eqref{y-combo}.  Fourth, we initialized the target values as
\begin{align}
              t_1 &\gets \max\{10^2,\min\{10^4,\|c_k\|_\infty\}\} \\
  \words{and} T_1 &\gets \max\{10^0,\min\{10^2,\|F\sub{L}(x_k,y_k)\|_\infty\}\}.
\end{align}
Finally, in \aallssafe\ and \aaltrsafe, we safeguard the steering procedure by shutting it off whenever the penalty parameter was smaller than a prescribed tolerance.  Specifically, we considered the \textbf{while} condition in line~\ref{while-2} of Algorithm~\ref{alg-aal} and line~19 of \cite[Algorithm~4]{curtis12} to be satisfied whenever $\mu_k \leq 10^{-4}$.

\subsubsection{Results on \CUTEst{} test problems} \label{sec:matlab-cutest}

We tested our \Matlab{} algorithms on the subset of problems from the \CUTEst{}~\cite{gould2013cutest} collection that have at least one general constraint and at most $1000$ variables and $1000$ constraints.  This set contains $383$ test problems.  However, the results that we present in this section are only for those problems for which at least one of our six solvers obtained a successful result, i.e., where \eqref{terminate1} or \eqref{terminate2} was satisfied.  This led to a set of $323$ problems that are represented in the numerical results in this section.

To illustrate the performance of our \Matlab{} software, we use performance profiles as introduced by Dolan and Mor\'{e}~\cite{DolM02} to provide a visual comparison of different measures of performance.  Consider a performance profile that measures performance in terms of required iterations until termination.  For such a profile, if the graph associated with an algorithm passes through the point $(\alpha,0.\beta)$, then, on $\beta\%$ of the problems, the number of iterations required by the algorithm was less than $2^\alpha$ times the number of iterations required by the algorithm that required the fewest number of iterations.  At the extremes of the graph, an algorithm with a higher value on the vertical axis may be considered a more efficient algorithm, whereas an algorithm on top at the far right of the graph may be considered more reliable.  Since, for most problems, comparing values in the performance profiles for large values of $\alpha$ is not enlightening, we truncated the horizontal axis at 16 and simply remark on the numbers of failures for each algorithm.

Figures~\ref{fig:ppAALIter_LS} and \ref{fig:ppAALFunc_LS} show the results for the three line search variants, namely \balls, \aalls, and \aallssafe.  The numbers of failures for these algorithms were 25, 3, and 16, respectively.  The same conclusion may be drawn from both profiles: the steering variants (with and without safeguarding) were both more efficient and more reliable than the basic algorithm, where efficiency is measured by either the number of iterations (Figure~\ref{fig:ppAALIter_LS}) or the number of function evaluations (Figure~\ref{fig:ppAALFunc_LS}) required.  We display the profile for the number of function evaluations required since, for a line search algorithm, this value is always at least as large as the number of iterations, and will be strictly greater whenever backtracking is required to satisfy \eqref{dec-L-ls} (yielding $\alpha_k < 1$).  From these profiles, one may observe that unrestricted steering (in \aalls) yielded superior performance to restricted steering (in \aallssafe) in terms of both efficiency and reliability; this suggests that safeguarding the steering mechanism may diminish its potential benefits.
    
\begin{figure}[ht]
  \begin{minipage}[b]{0.45\linewidth}
    \centering
    \includegraphics[scale=0.40,clip=true,trim=80 60 300 240]{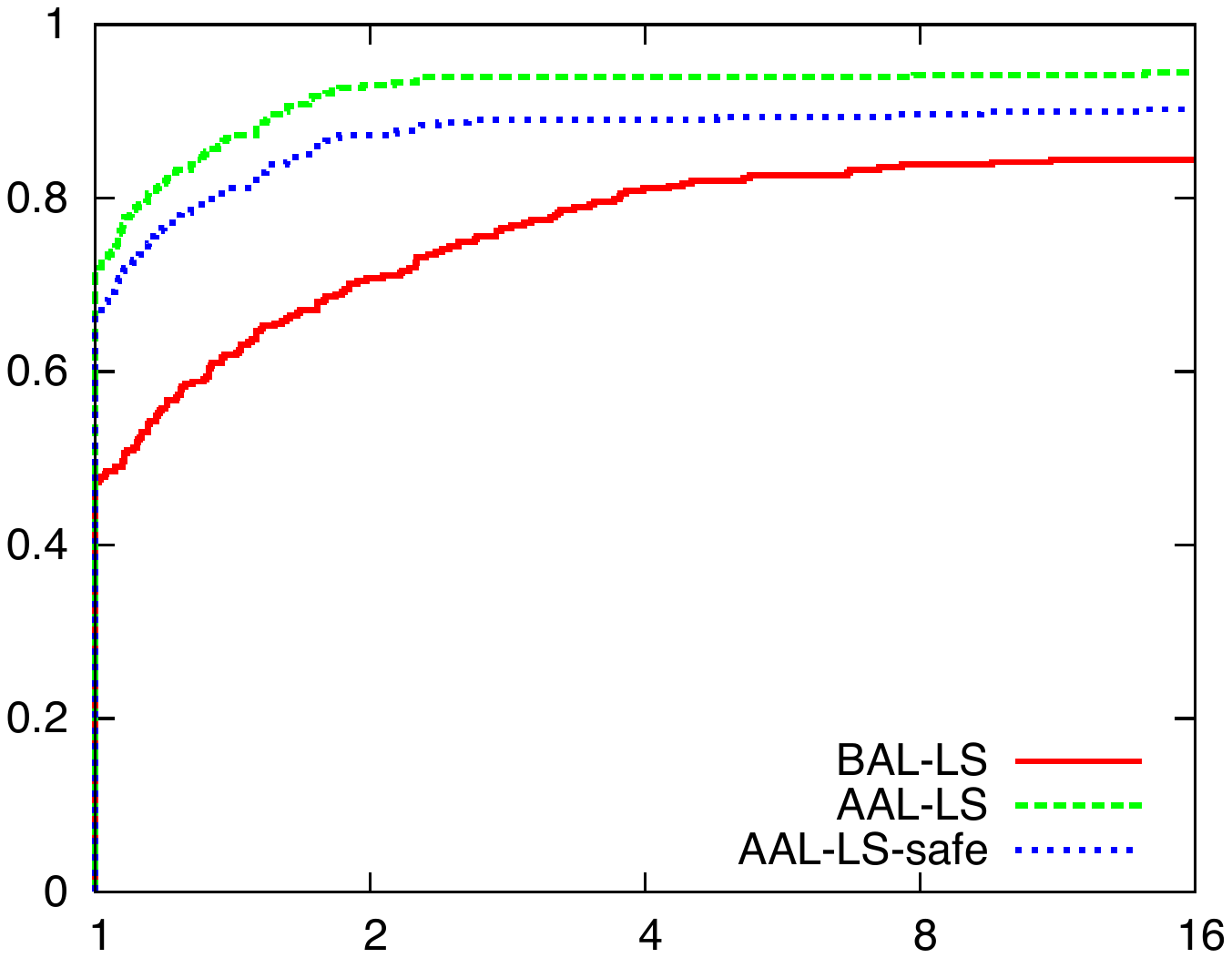}
    \caption{Performance profile for iterations: line search algorithms on the \CUTEst{} set.}
    \label{fig:ppAALIter_LS}
  \end{minipage}
  \hspace{1.0cm}
  \begin{minipage}[b]{0.45\linewidth}
    \centering
    \includegraphics[scale=0.40,clip=true,trim=80 60 300 240]{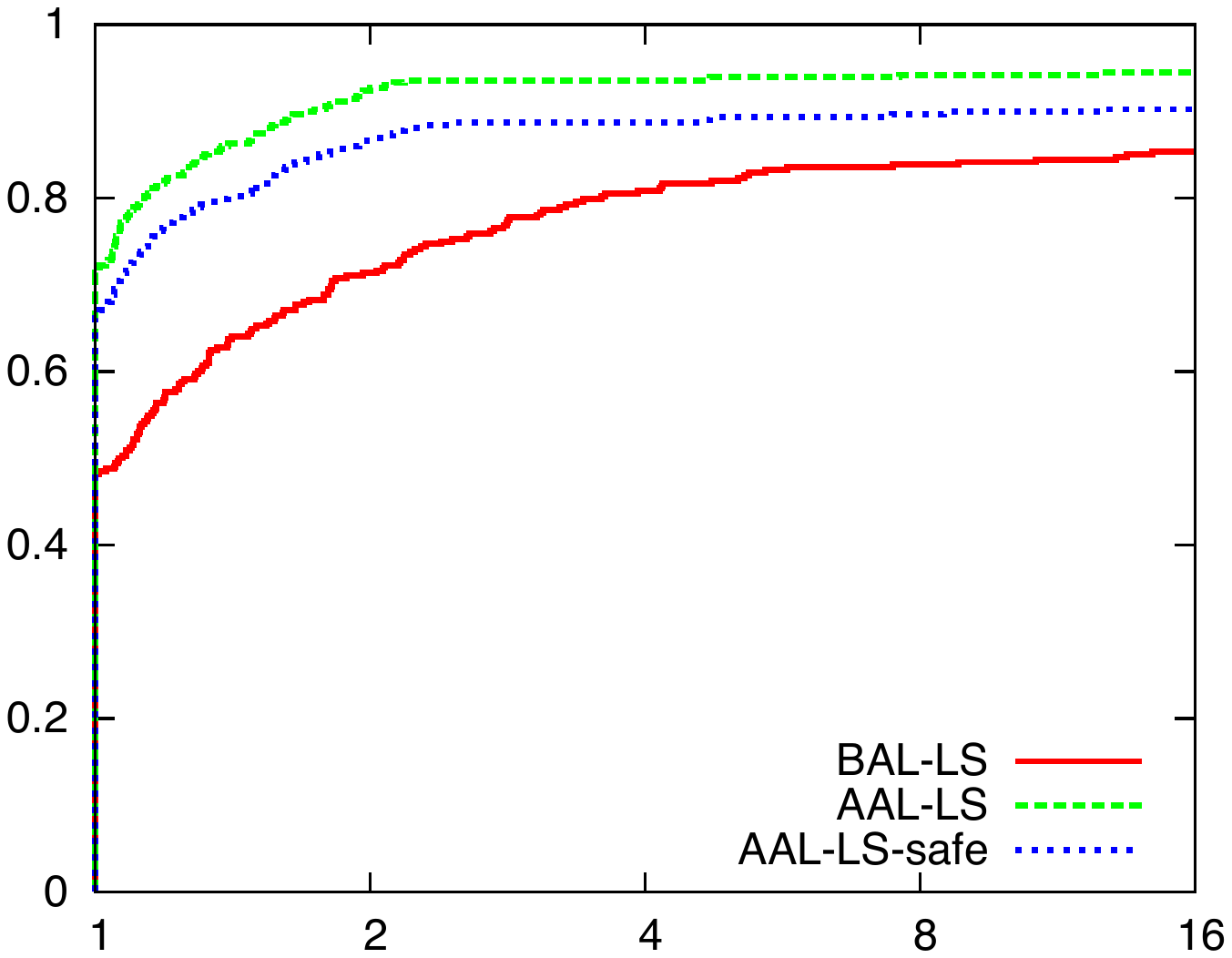}
    \caption{Performance profile for function evaluations: line search algorithms on the \CUTEst{} set.}
    \label{fig:ppAALFunc_LS}
  \end{minipage}
\end{figure}

Figures~\ref{fig:ppAALIter_TR} and \ref{fig:ppAALFunc_TR} show the results for the three trust region variants, namely \baltr, \aaltr, and \aaltrsafe, the numbers of failures for which were 30, 12, and 20, respectively.  Again, as for the line search algorithms, the same conclusion may be drawn from both profiles: the steering variants (with and without safeguarding) are both more efficient and more reliable than the basic algorithm, where now we measure efficiency by either the number of iterations (Figure~\ref{fig:ppAALIter_TR}) or the number of gradient evaluations (Figure~\ref{fig:ppAALFunc_TR}) required before termination.  We observe the number of gradient evaluations here (as opposed to the number of function evaluations) since, for a trust region algorithm, this value is never larger than the number of iterations, and will be strictly smaller whenever a step is rejected and the trust-region radius is decreased because of insufficient decrease in the AL function.  These profiles also support the other observation that was made by the results for our line search algorithms, i.e., that unrestricted steering may be superior to restricted steering in terms of efficiency and reliability.

\begin{figure}[ht]
  \begin{minipage}[b]{0.45\linewidth}
    \centering
    \includegraphics[scale=0.40,clip=true,trim=80 60 300 240]{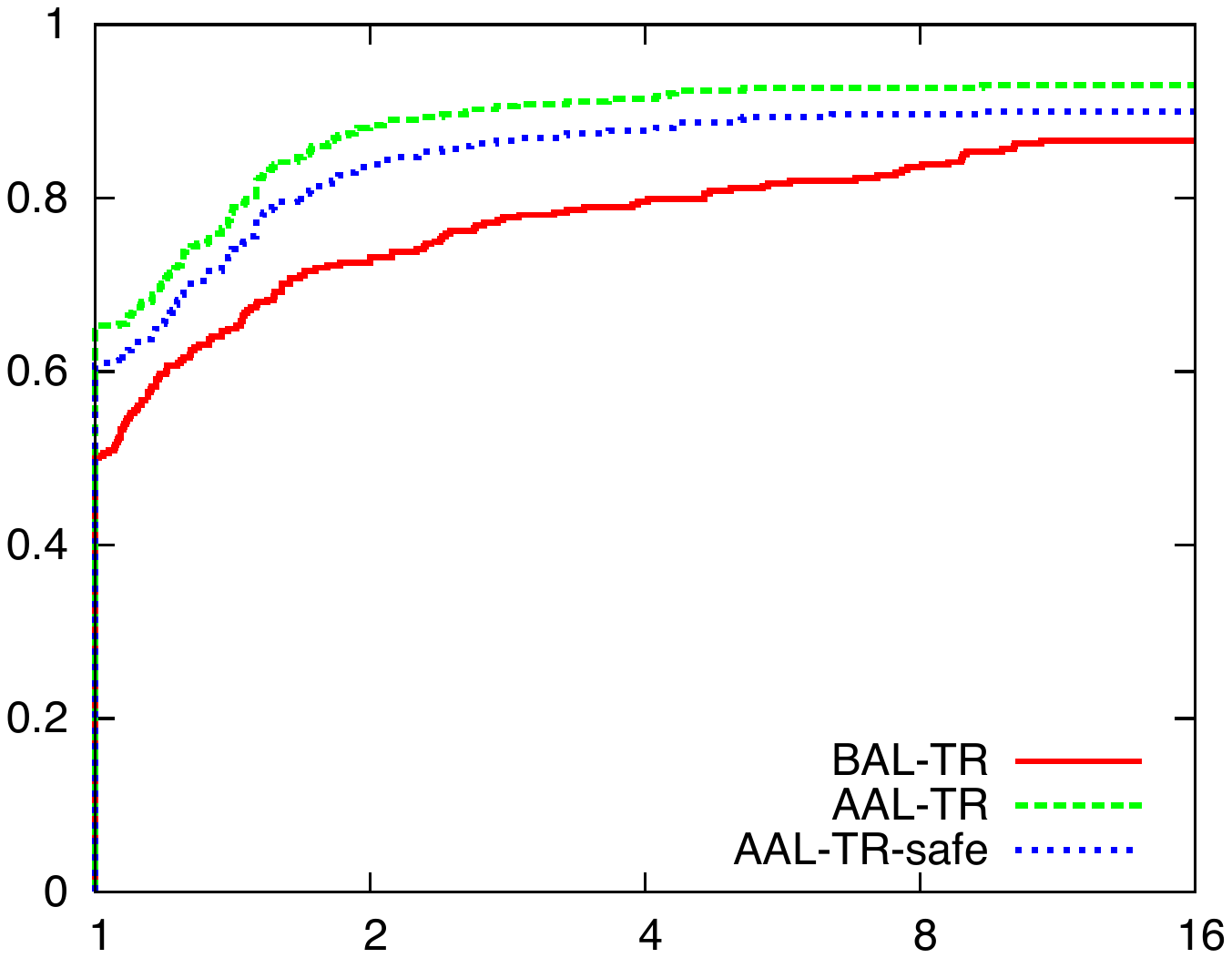}
    \caption{Performance profile for iterations: trust region algorithms on the \CUTEst{} set.}
    \label{fig:ppAALIter_TR}
  \end{minipage}
  \hspace{1.0cm}
  \begin{minipage}[b]{0.45\linewidth}
    \centering
    \includegraphics[scale=0.40,clip=true,trim=80 60 300 240]{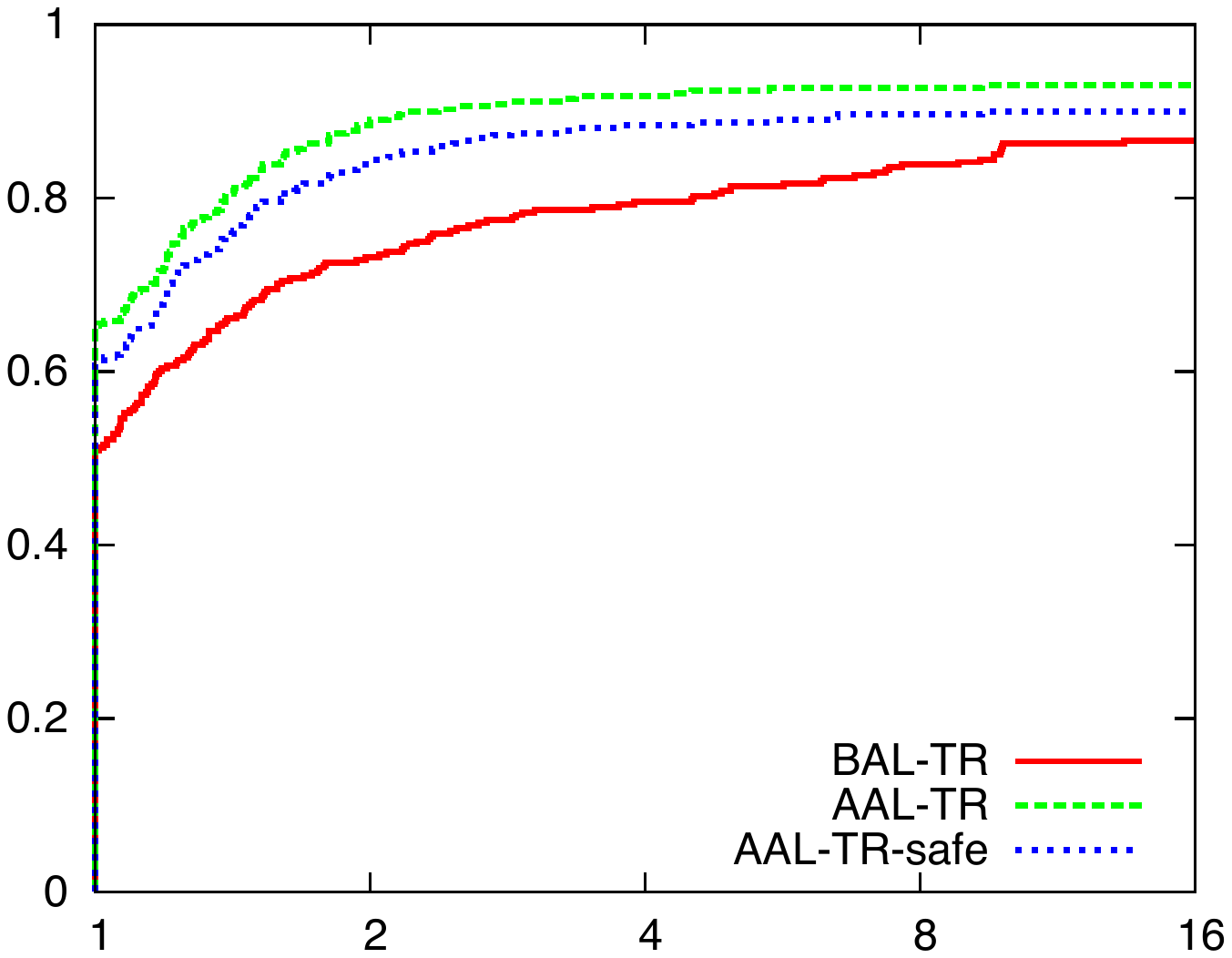}
     \caption{Performance profile for gradient evaluations: trust region algorithms on the \CUTEst{} set.}  
    \label{fig:ppAALFunc_TR}
  \end{minipage}
\end{figure}

The performance profiles in Figures~\ref{fig:ppAALIter_LS}--\ref{fig:ppAALFunc_TR} suggest that steering has practical benefits, and that safeguarding the procedure may limit its potential benefits.  However, to be more confident in these claims, one should observe the final penalty parameter values typically produced by the algorithms.  These observations are important since one may be concerned whether the algorithms that employ steering yield final penalty parameter values that are often significantly smaller than those yielded by basic AL algorithms.  To investigate this possibility in our experiments, we collected the final penalty parameter values produced by all six algorithms; the results are in Table~\ref{tab-muvals-AAL}.  The column titled $\mu\sub{final}$ gives a range for the final value of the penalty parameter.  (For example, the value 27 in the $\balls$ column indicates that the final penalty parameter value computed by our basic line search AL algorithm fell in the range $[10^{-2},10^{-1})$ for 27 of the problems.)

\begin{table}[ht]
\centering
\caption{Numbers of \CUTEst{} problems for which the final penalty parameter values were in the given ranges.}
\label{tab-muvals-AAL}
\begin{tabular}{|lcccccc|}
  \hline
  \strt $\mu\sub{final}$     & \balls & \aalls & \aallssafe & \baltr & \aaltr & \aaltrsafe \\ \hline 
  \strt $1$                  & 139    & 87     & 87         & 156    & 90     & 90         \\ 
  \strt $[10^{-1},1)$        & 43     & 33     & 33         & 35     & 46     & 46         \\ 
  \strt $[10^{-2},10^{-1})$  & 27     & 37     & 37         & 28     & 29     & 29         \\ 
  \strt $[10^{-3}, 10^{-2})$ & 17     & 42     & 42         & 19     & 49     & 49         \\ 
  \strt $[10^{-4}, 10^{-3})$ & 22     & 36     & 36         & 18     & 29     & 29         \\ 
  \strt $[10^{-5}, 10^{-4})$ & 19     & 28     & 42         & 19     & 25     & 39         \\ 
  \strt $[10^{-6}, 10^{-5})$ & 15     & 19     & 11         & 9      & 11     & 9          \\ 
  \strt $(0, 10^{-6})$       & 46     & 46     & 40         & 44     & 49     & 37         \\ 
  \hline
\end{tabular}
\end{table}

We remark on two observations about the data in Table~\ref{tab-muvals-AAL}.  First, as may be expected, the algorithms that employ steering typically reduce the penalty parameter below its initial value on some problems on which the other algorithms do not reduce it at all.  This, in itself, is not a major concern, since a reasonable reduction in the penalty parameter may cause an algorithm to locate a stationary point more quickly.  Second, we remark that the number of problems for which the final penalty parameter was very small (say, less than $10^{-4}$) was similar for all algorithms, even those that employed steering.  This suggests that while steering was able to aid in guiding the algorithms toward constraint satisfaction, the algorithms did not reduce the value to such a small value that feasibility became the only priority.  Overall, our conclusion from Table~\ref{tab-muvals-AAL} is that steering typically decreases the penalty parameter more than does a traditonal updating scheme, but one should not expect that the final penalty parameter value will be reduced unnecessarily small due to steering; rather, steering can have the intended benefit of improving efficiency and reliability by guiding a method toward constraint satisfaction more quickly.

\subsubsection{Results on \COPS{} test problems} \label{sec:cops}

We also tested our \Matlab{} software on the large-scale constrained problems available in the \COPS{}~\cite{BonDM98} collection.  This test set was designed to provide difficult test cases for nonlinear optimization software; the problems include examples from fluid dynamics, population dynamics, optimal design, mesh smoothing, and optimal control.  For our purposes, we solved the smallest versions of the \AMPL{} models \cite{AMPL,AMPL2} provided in the collection.  All of our solvers failed to solve the problems named
{\it chain}, 
{\it dirichlet}, 
{\it henon}, 
{\it lane\_emden}, 
and
{\it robot1}, 
so these problems were excluded.  The remaining set consisted of the following 17 problems:
{\it bearing}, 
{\it camshape}, 
{\it catmix}, 
{\it channel}, 
{\it elec}, 
{\it gasoil}, 
{\it glider}, 
{\it marine}, 
{\it methanol}, 
{\it minsurf}, 
{\it pinene}, 
{\it polygon}, 
{\it rocket}, 
{\it steering}, 
{\it tetra}, 
{\it torsion}, and 
{\it triangle}.
Since the size of this test set is relatively small, we have decided to display pair-wise comparisons of algorithms in the manner suggested in~\cite{morales2002numerical}.  That is, for a performance measure of interest (e.g., number of iterations required until termination), we compare solvers, call them $A$ and $B$, on problem $j$ with the logarithmic \emph{outperforming factor}
\begin{equation}
	r^j_{AB} := -\log_2 ({\textrm{m}_A^j}/{\textrm{m}_B^j}),\ \ \text{where}\ \ \begin{cases} \textrm{m}_A^j & \text{is the measure for $A$ on problem $j$} \\ \textrm{m}_B^j & \text{is the measure for $B$ on problem $j$.} \end{cases}
\end{equation}
Therefore, if the measure of interest is iterations required, then $r^j_{AB} = p$ would indicate that solver $A$ required $2^{-p}$ the iterations required by solver $B$.  For all plots, we focus our attention on the range $p \in [-2,2]$.

The results of our experiments are given in Figures~\ref{fig:ppAALIter_LS_cops}--\ref{fig:ppAALFunc_TR_cops}.  For the same reasons as discussed in \S\ref{sec:matlab-cutest}, we display results for iterations and function evaluations for the line search algorithms, and display results for iterations and gradient evaluations for the trust region algorithms.  In addition, here we ignore the results for \aallssafe\ and \aaltrsafe\ since, as in the results in \S\ref{sec:matlab-cutest}, we did not see benefits in safeguarding the steering mechanism.  In each figure, a positive (negative) bar indicates that the algorithm whose name appears above (below) the horizontal axis yielded a better value for the measure on a particular problem.  The results are displayed according to the order of the problems listed in the previous paragraph.  In Figures~\ref{fig:ppAALIter_LS_cops} and \ref{fig:ppAALFunc_LS_cops} for the line search algorithms, the red bars for problems {\it catmix} and {\it polygon} indicate that \aalls\ failed on the former and \balls\ failed on the latter; similarly, in Figures~\ref{fig:ppAALIter_TR_cops} and \ref{fig:ppAALFunc_TR_cops} for the trust region algorithms, the red bar for {\it catmix} indicates that \aaltr\ failed on it.

The results in Figures~\ref{fig:ppAALIter_LS_cops} and \ref{fig:ppAALFunc_LS_cops} indicate that \aalls\ more often outperforms \balls\ in terms of iterations and functions evaluations, though the advantage is not overwhelming.  On the other hand, it is clear from Figures~\ref{fig:ppAALIter_TR_cops} and \ref{fig:ppAALFunc_TR_cops} that, despite the one failure, \aaltr{} is generally superior to \baltr{}.  We conclude from these results that steering was beneficial on this test set, especially in terms of the trust region methods.

\begin{figure}[ht]
  \begin{minipage}[b]{0.45\linewidth}
    \centering
    \includegraphics[scale=0.40,clip=true,trim=70 190 10 200]{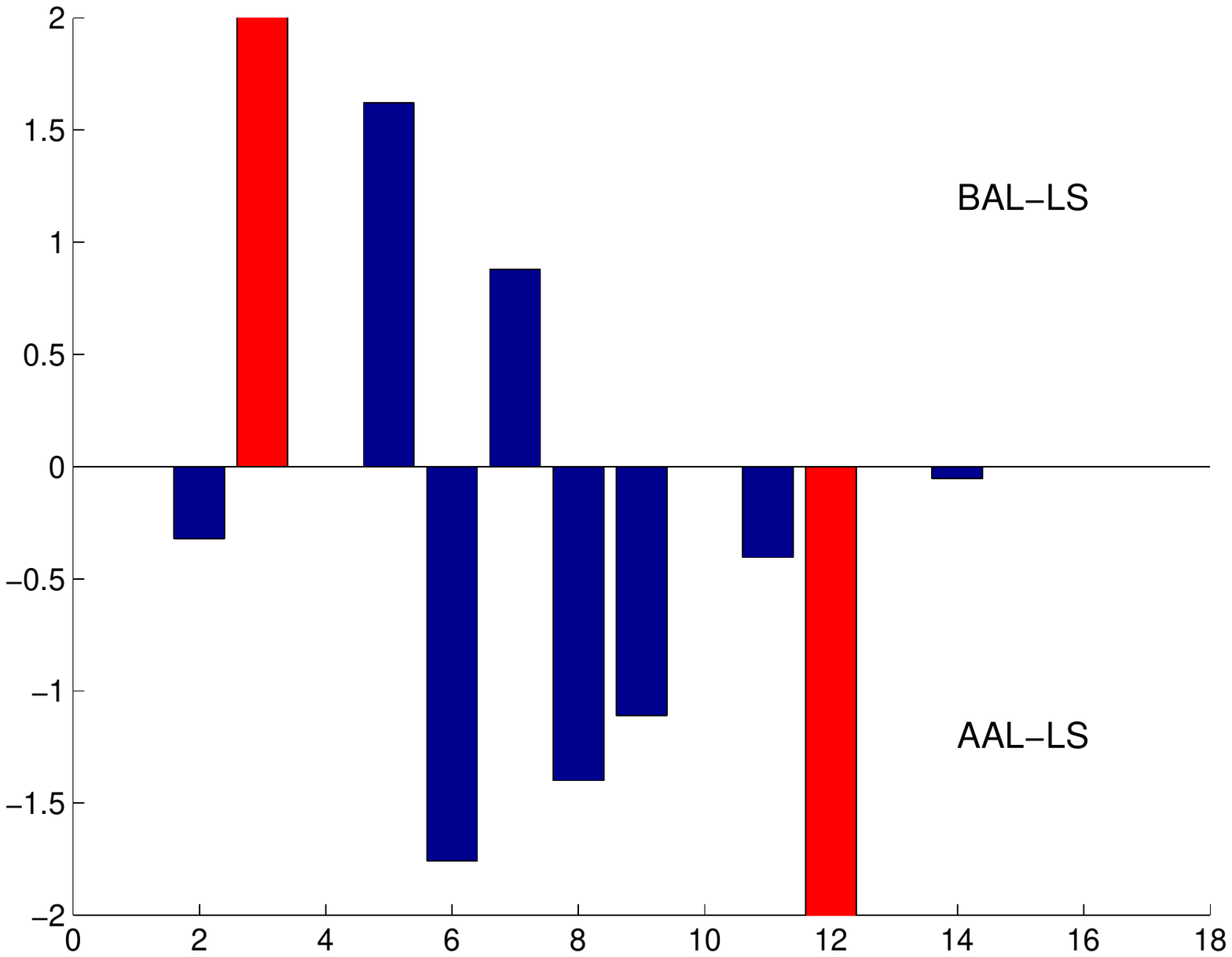}
    \caption{Outperforming factors for iterations: line search algorithms on the \COPS{} set.}
    \label{fig:ppAALIter_LS_cops}
  \end{minipage}
  \hspace{1.0cm}
  \begin{minipage}[b]{0.45\linewidth}
    \centering
     \includegraphics[scale=0.40,clip=true,trim=70 190 10 200]{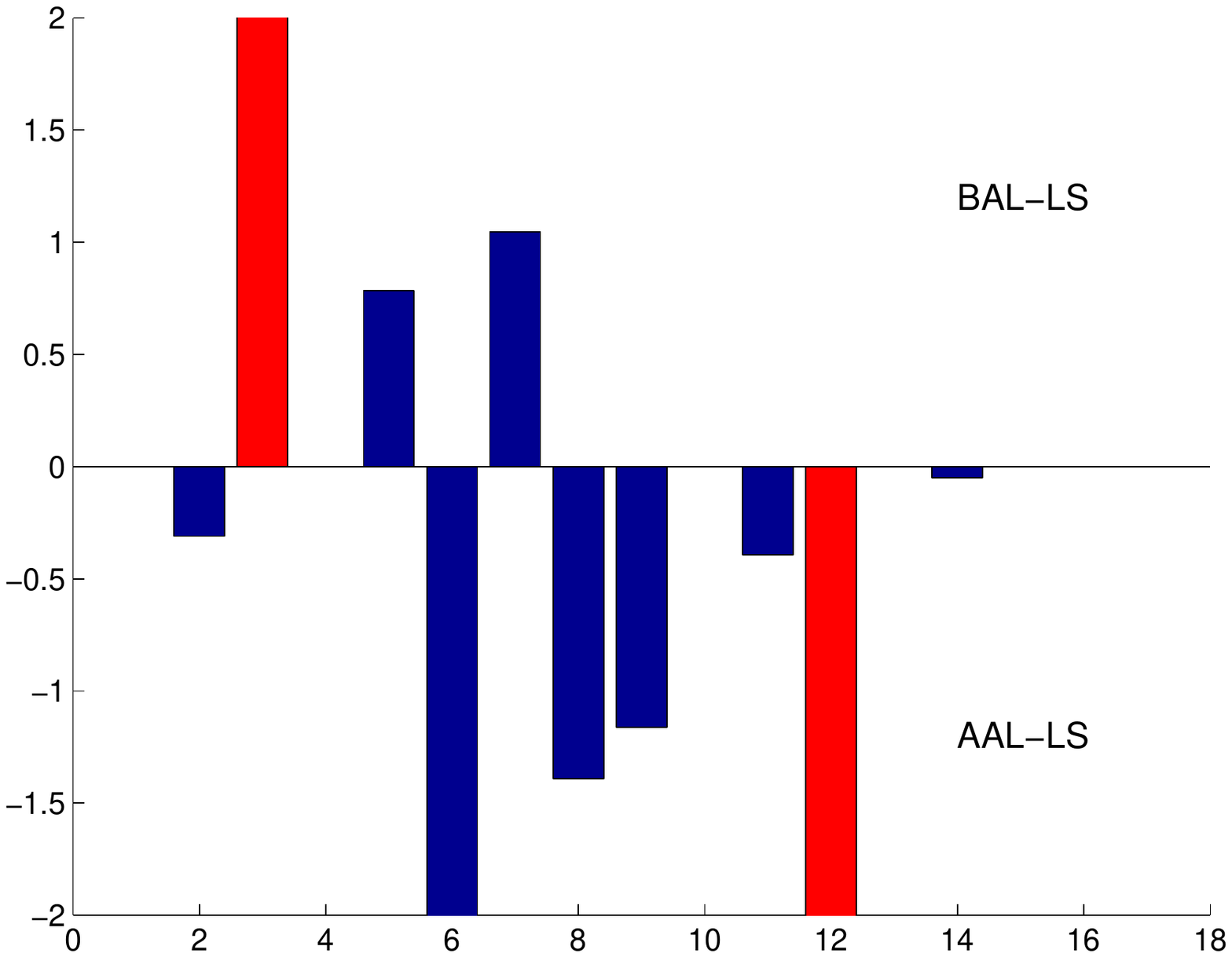}
    \caption{Outperforming factors for function evaluations: line search algorithms on the \COPS{} set.}
    \label{fig:ppAALFunc_LS_cops}
  \end{minipage}
\end{figure}

\begin{figure}[ht]
  \begin{minipage}[b]{0.45\linewidth}
    \centering
    \includegraphics[scale=0.40,clip=true,trim=70 190 10 200]{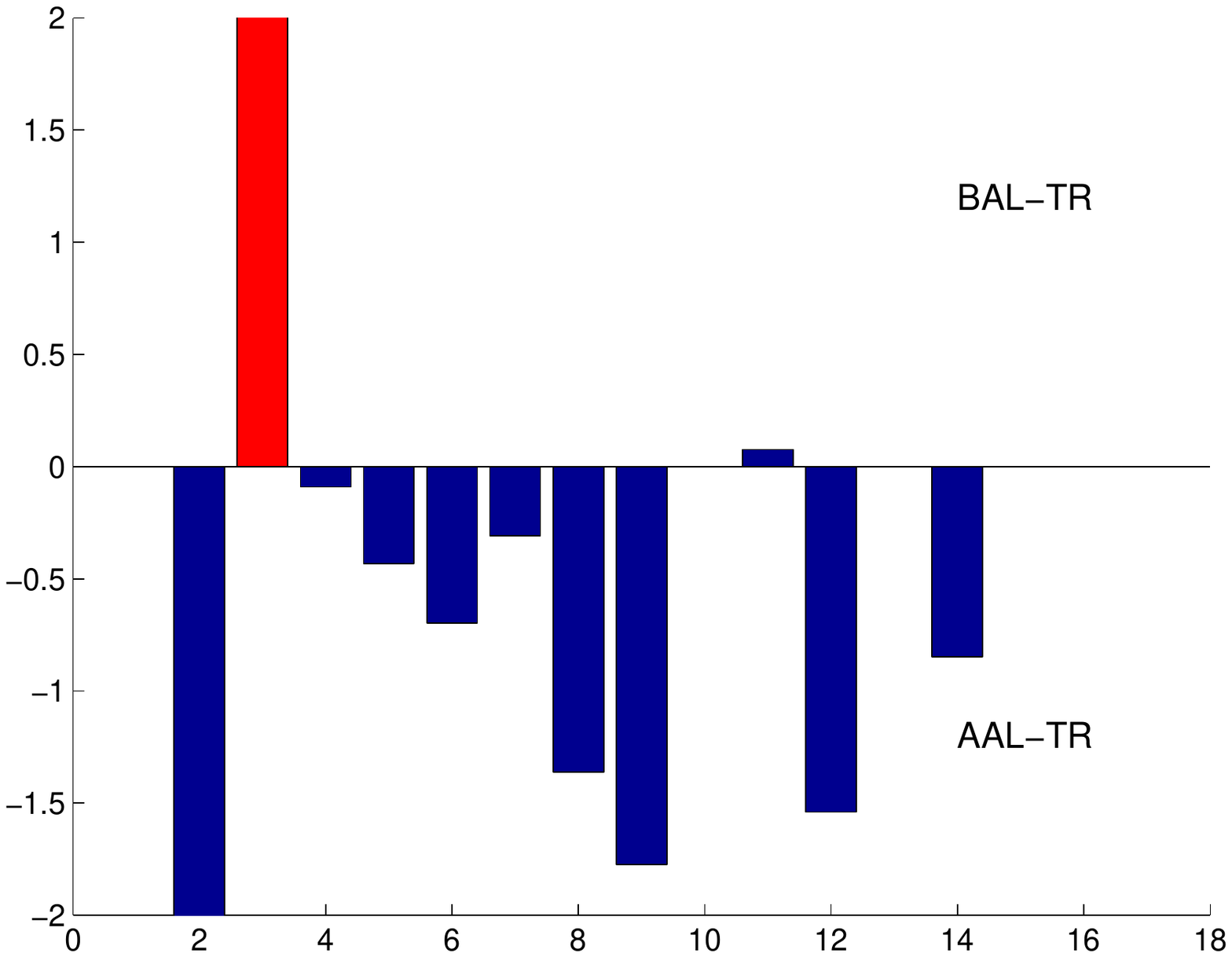}
    \caption{Outperforming factors for iterations: trust region algorithms on the \COPS{} set.}
    \label{fig:ppAALIter_TR_cops}
  \end{minipage}
  \hspace{1.0cm}
  \begin{minipage}[b]{0.45\linewidth}
    \centering
    \includegraphics[scale=0.40,clip=true,trim=70 190 10 200]{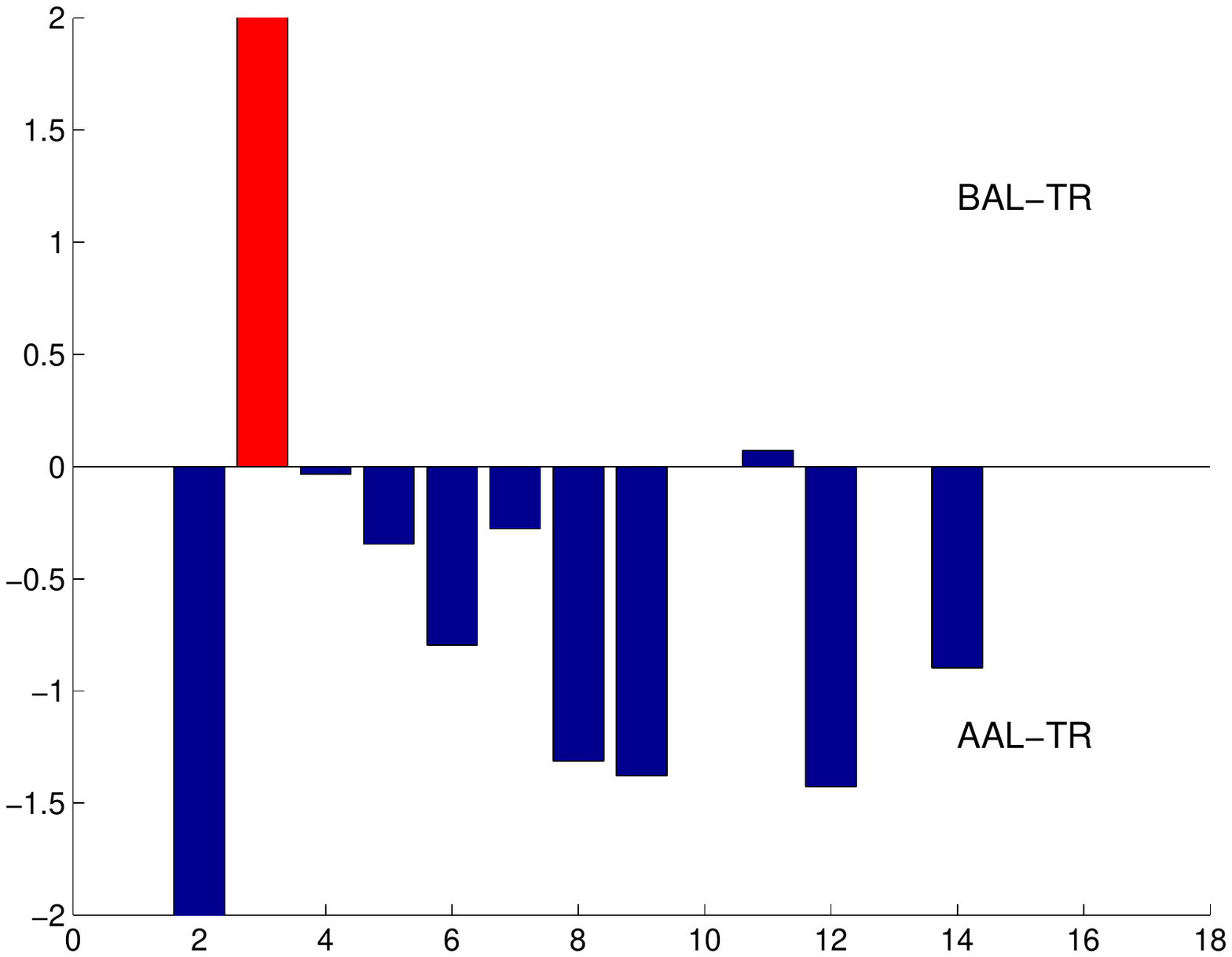}
     \caption{Outperforming factors for gradient evaluations: trust region algorithms on the \COPS{} set.}  
    \label{fig:ppAALFunc_TR_cops}
  \end{minipage}
\end{figure}

\subsubsection{Results on optimal power flow (OPF) test problems} \label{sec:opf}

As a third and final set of experiments for our \Matlab{} software, we tested our algorithms on a collection of
optimal power flow (OPF) problems modeled in \AMPL{} using data sets obtained from {\sc MATPOWER}~\cite{zimmerman2011matpower}.
OPF problems represent a challenging set of nonconvex problems. The active and reactive power flow and the network balance equations give
rise to equality constraints involving nonconvex functions while the inequality constraints are linear and result from placing operating limits on
quantities such as flows, voltages, and various control variables.  The control variables include the voltages
at generator buses and the active-power output of the generating units. The state variables consist of the
voltage magnitudes and angles at each node as well as reactive and active flows in each link. Our test set was comprised of $28$ problems modeled on systems having 14 to 662 nodes from the IEEE test set. In particular, there are
seven IEEE systems, each modeled in four different ways: (i) in Cartesian coordinates; (ii) in polar coordinates;
(iii) with basic approximations to the sin and cos functions in the problem functions; and (iv) with
linearized constraints based on DC power 
flow equations (in place of AC power 
flow). It should be noted
that while linearizing the constraints in formulation (iv) led to a set of linear optimization problems, we still find it interesting
to investigate the possible effect that steering may have in this context. All of the test problems were solved
by all of our algorithm variants.

%

We provide outperforming factors in the same manner as in \S\ref{sec:cops}.  Figures~\ref{fig:ppAALIter_LS_opf} and \ref{fig:ppAALFunc_LS_opf} reveal that \aalls\ typically outperforms \balls\ in terms of both iterations and function evaluations, and Figures~\ref{fig:ppAALIter_TR_opf} and \ref{fig:ppAALFunc_TR_opf} reveal that \aaltr{} more often than not outperforms \baltr\ in terms of iterations and gradient evaluations.  Interestingly, these results suggest more benefits for steering in the line search algorithm than in the trust region algorithm, which is the opposite of that suggested by the results in~\S\ref{sec:cops}.  However, in any case, we believe that we have presented convincing numerical evidence that steering often has an overall beneficial effect on the performance of our \Matlab{} solvers.

\begin{figure}[ht]
  \begin{minipage}[b]{0.45\linewidth}
    \centering
    \includegraphics[scale=0.40,clip=true,trim=70 190 10 200]{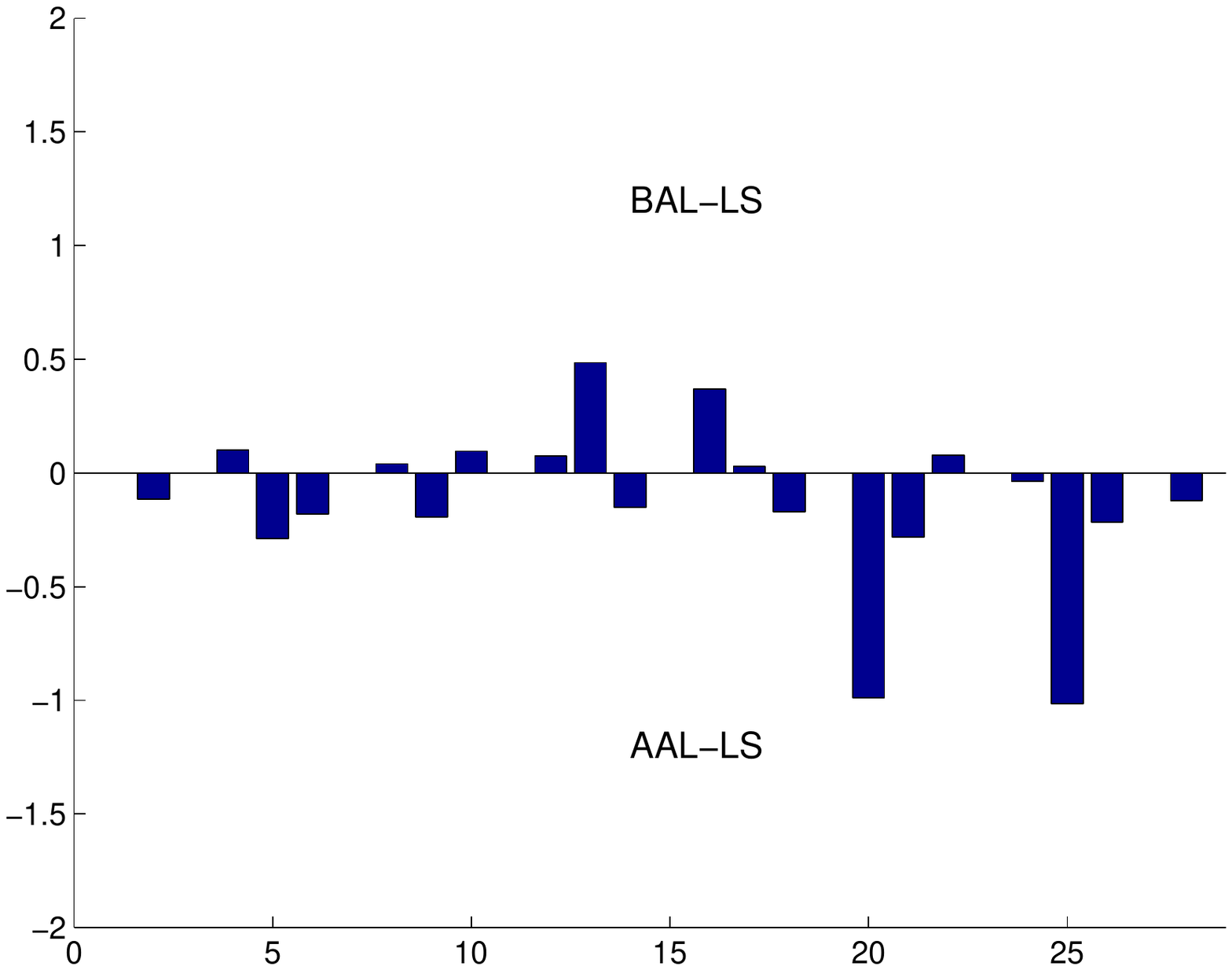}
    \caption{Outperforming factors for iterations: line search algorithms on OPF tests.}
    \label{fig:ppAALIter_LS_opf}
  \end{minipage}
  \hspace{1.0cm}
  \begin{minipage}[b]{0.45\linewidth}
    \centering
    \includegraphics[scale=0.40,clip=true,trim=70 190 10 200]{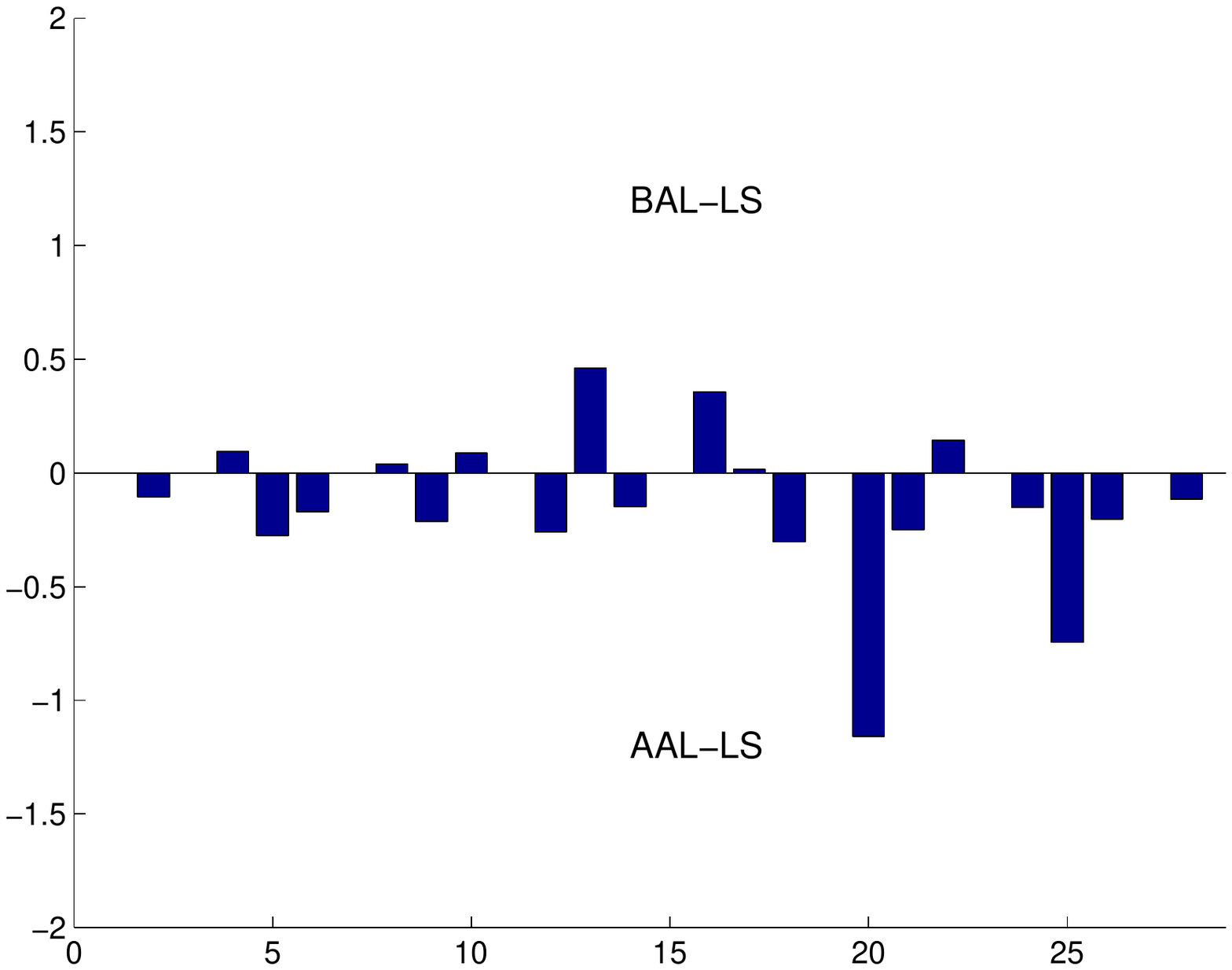}
    \caption{Outperforming factors for function evaluations: line search algorithms on OPF tests.}
    \label{fig:ppAALFunc_LS_opf}
  \end{minipage}
\end{figure}

\begin{figure}[ht]
  \begin{minipage}[b]{0.45\linewidth}
    \centering
    \includegraphics[scale=0.40,clip=true,trim=70 190 10 200]{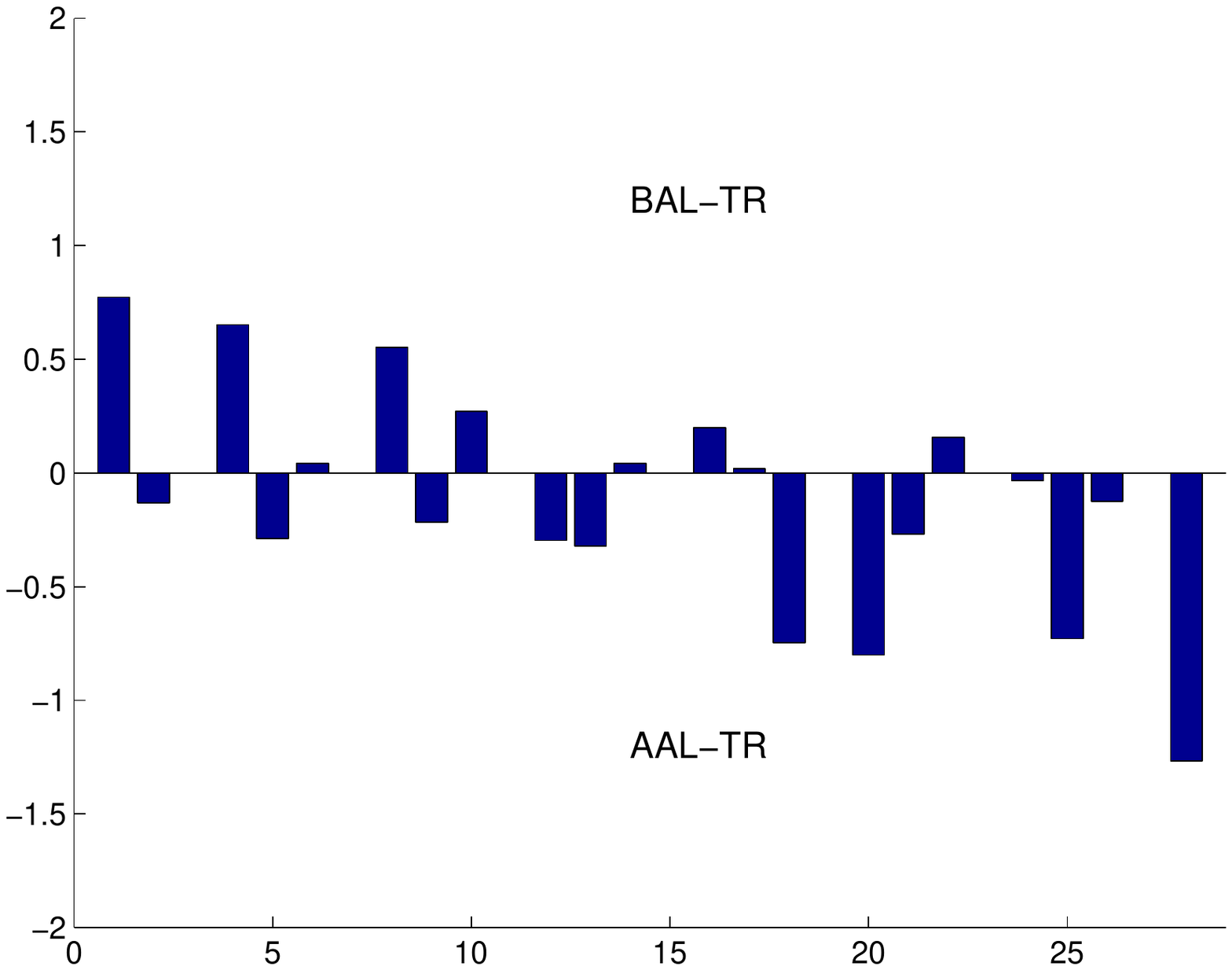}
    \caption{Outperforming factors for iterations: trust region algorithms on OPF tests.}
    \label{fig:ppAALIter_TR_opf}
  \end{minipage}
  \hspace{1.0cm}
  \begin{minipage}[b]{0.45\linewidth}
    \centering
    \includegraphics[scale=0.40,clip=true,trim=70 190 10 200]{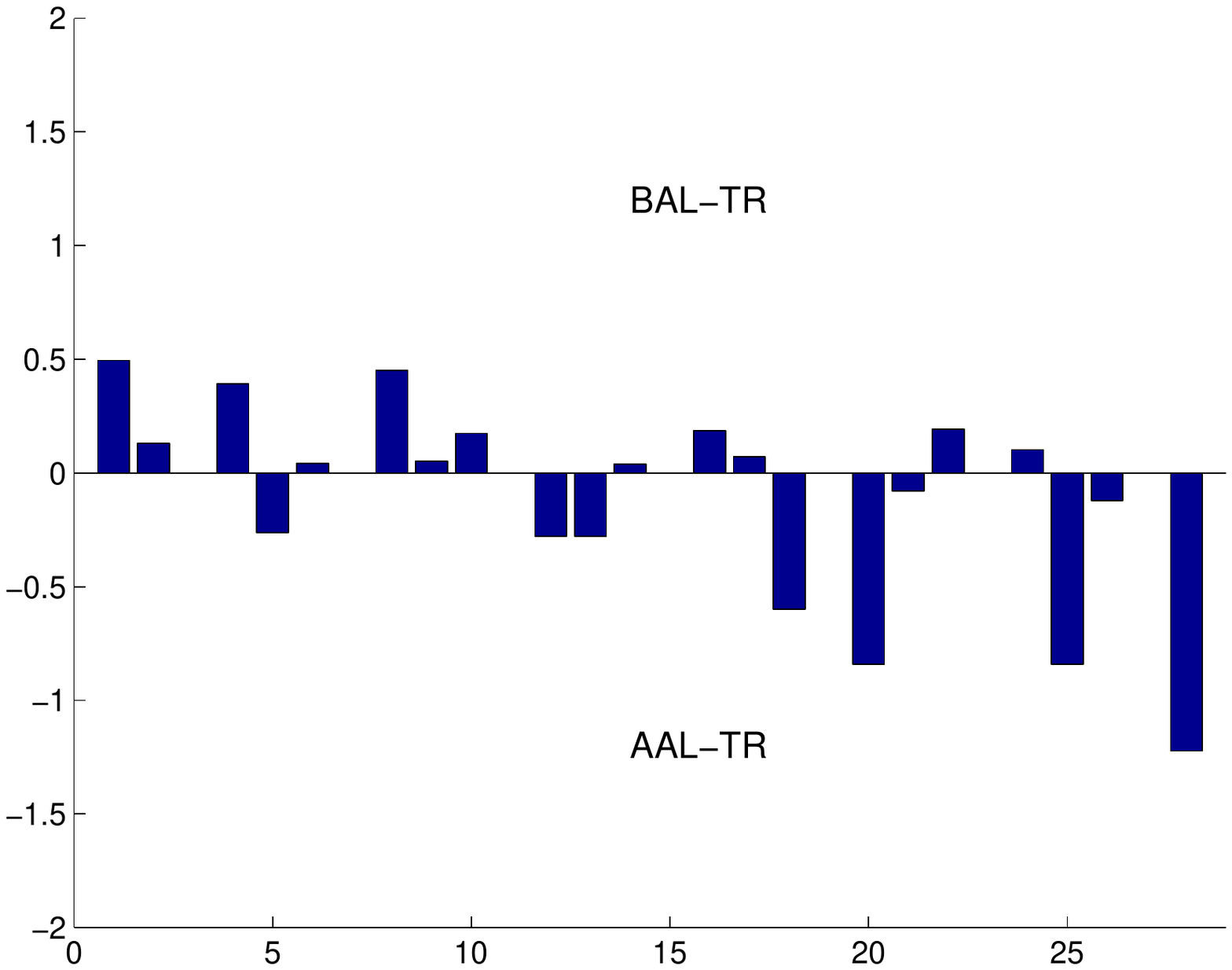}
     \caption{Outperforming factors for gradient evaluations: trust region algorithms on OPF tests.}
    \label{fig:ppAALFunc_TR_opf}
  \end{minipage}
\end{figure}

\subsection{An implementation of \lancelot{} that uses steering}

\subsubsection{Implementation details} \label{sec:Lancelot-imp}

The results for our \Matlab{} software in the previous section illustrate that our adaptive line search AL algorithm and the adaptive trust region AL algorithm from \cite{curtis12} are often more efficient and reliable than basic AL algorithms that employ traditional penalty parameter and Lagrange multiplier updates.  Recall, however, that our adaptive methods are different from their basic counterparts in two key ways.  First, the steering conditions~\eqref{conds} are used to dynamically decrease the penalty parameter during the optimization process for the AL function.  Second, our mechanisms for updating the Lagrange multiplier estimate are different than the basic algorithm outlined in \cite[Algorithm~1]{curtis12} since they use
optimality measures
for both the Lagrangian and the AL functions
(see line~\ref{crunchy} of Algorithm~\ref{alg-aal}) rather than only that for the AL function.  We believe this strategy is more adaptive since it allows for updates to the Lagrange multipliers when the primal estimate is still far from a first-order stationary point for the AL function subject to the bounds. 

In this section, we isolate the effect of the first of these differences by incorporating a steering strategy in the \lancelot{} \cite{ConGT92a,ConnGoulToin96:mp} package that is available in the \galahad\ library \cite{GoulOrbaToin03:toms}.  Specifically, we made three principle enhancements in \lancelot{}.  First, along the lines of the model $q$ in \cite{curtis12} and the convexified model $\tilde{q}$ defined in this paper, we defined the model $\hat{q} : \Re^n \to \Re$ of the AL function given by
\[
  \hat{q}(s;x,y,\mu) = s^T \nabla_x \ell\big(x,y+ c(x)/\mu\big) + \half s^T \big( \nabla_{xx} \ell(x,y) + J(x)\T J(x)/\mu \big) s
\]
as an alternative to the Newton model $q\sub{N} : \Re^n \to \Re$, originally used in \lancelot{},
\[
  q\sub{N}(s;x,y,\mu) = s^T \nabla_x \ell(x,y+ c(x)/\mu) + \half s^T ( \nabla_{xx} \ell(x,y+ c(x)/\mu) + J(x)^T J(x)/\mu ) s.
\]
As in our adaptive algorithms, the purpose of employing such a model was to ensure that $\hat{q} \to q_v$ (pointwise) as $\mu \to 0$, which was required to ensure that our steering procedure was well-defined; see \eqref{models-converge}.  Second, we added routines to compute generalized Cauchy points \cite{ConGT88} for both the constraint violation measure model $q_v$ and $\hat{q}$ during the loop in which $\mu$ was decreased until the steering test \eqref{cond2b} was satisfied; recall the \textbf{while} loop starting on line~\ref{while-2} of Algorithm~\ref{alg-aal}.  Third, we used the value for $\mu$ determined in the steering procedure to compute a generalized Cauchy point for the Newton model $q\sub{N}$, which was the model employed to compute the search direction.  For each of the models just discussed, the generalized Cauchy point was computed using either an efficient sequential search along the piece-wise Cauchy arc \cite{ConnGoulToin88:mc} or via a backtracking Armijo search along the same arc \cite{More88}.  We remark that this third enhancement would not have been needed if the model $\hat{q}$ were used to compute the search directions.  However, in our experiments, it was revealed that using the Newton model typically led to better performance, so the results in this section were obtained using this third enhancement.  In our implementation, the user was allowed to control which model was used via control parameters.  We also added control parameters that allowed the user to restrict the number of times that the penalty parameter may be reduced in the steering procedure in a given iteration, and that disabled steering once the penalty parameter was reduced below a given tolerance (as in the safeguarding procedure implemented in our \Matlab{} software).

The new package was tested with three different control parameter settings.  We refer to algorithm with the first setting, which did not allow any steering to occur, simply as \Lancelot{}.  The second setting allowed steering to be used initially, but turned it off whenever $\mu \leq 10^{-4}$ (as in our safeguarded \Matlab{} algorithms).  We refer to this variant as \LancelotSS{}.  The third setting allowed for steering to be used without any safeguards or restrictions; we refer to this variant as \LancelotS{}.  As in our \Matlab{} software, the penalty parameter was decreased by a factor of $0.7$ until the steering test \eqref{cond2b} was satisfied.  All other control parameters were set to their default \Lancelot\ values.  The new package will be re-branded as \lancelot{} in the next official release, \galahad~2.6.

\galahad\ was compiled with gfortran-4.7 with optimization -O and using Intel MKL BLAS. The code was executed on a single core of an Intel Xeon E5620 (2.4GHz) CPU with 23.5 GiB of RAM.

\subsubsection{Results on \CUTEst{} test problems}

We tested \Lancelot{}, \LancelotS{}, and \LancelotSS{} on the subset of \CUTEst{} problems that have at least one general constraint and at most $10,\!000$ variables and $10,\!000$ constraints.  This amounted to $457$ test problems.  The results are displayed as performance profiles in
Figures~\ref{fig:ppLanIter} and~\ref{fig:ppLanGrad}, which were created from the 364 of these problems that were solved by at least one of the algorithms.  As in the previous sections, since the algorithms are trust region methods, we use the number of iterations and gradient evaluations required as the performance measures of interest.

We can make two important observations from these profiles.  First, it is clear that \LancelotS{} and \LancelotSS{} yielded similar performance in terms of iterations and gradient evaluations, which suggests that safeguarding the steering mechanism is not necessary in practice.  Second, \LancelotS{} and \LancelotSS{} were both more efficient and reliable than~\Lancelot{} on these tests, thus showing the positive influence that steering can have on performance.

\begin{figure}[ht]
\begin{minipage}[b]{0.45\linewidth}
\centering
  \includegraphics[scale=0.40,angle=-90]{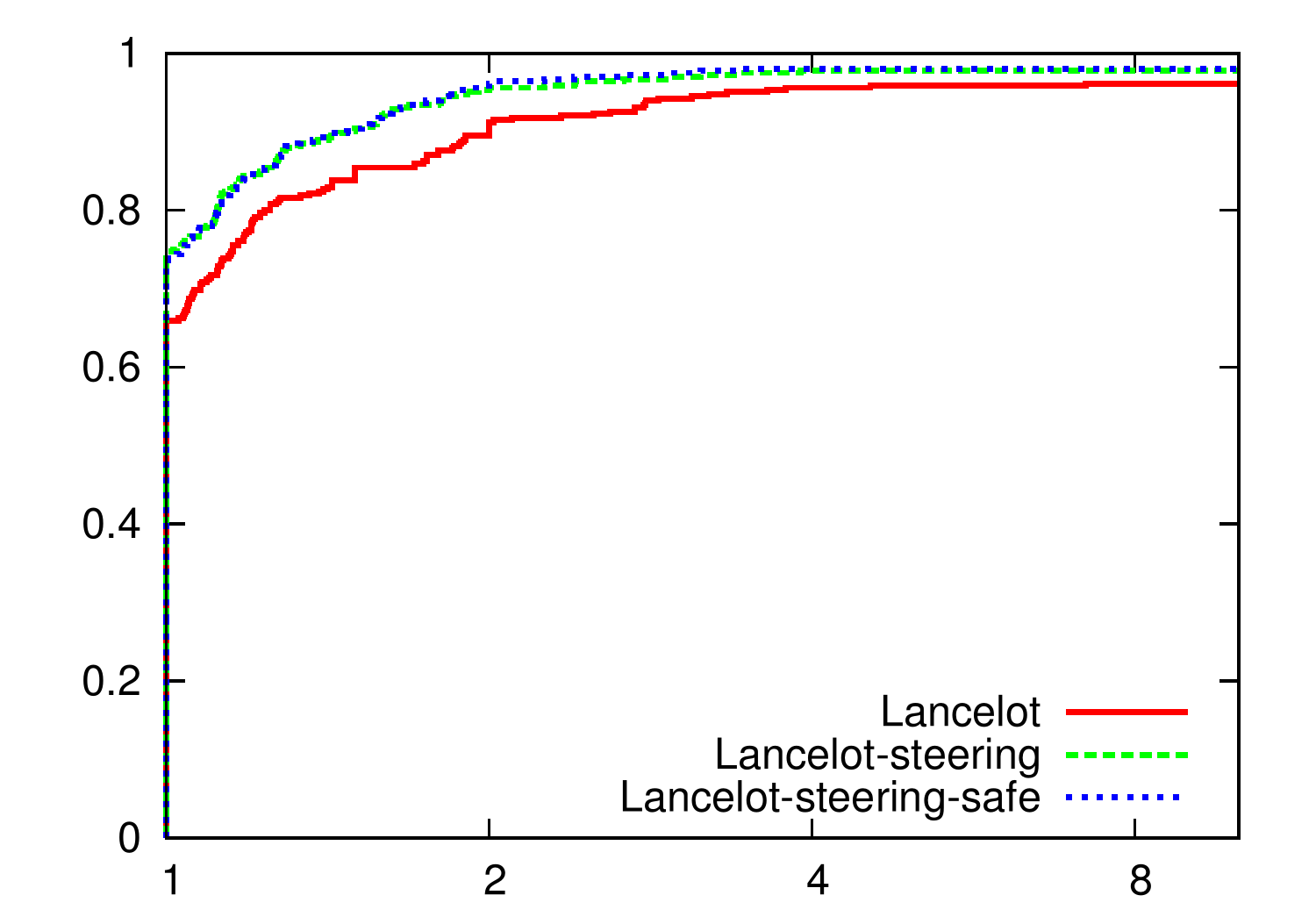}
\caption{Performance profile for iterations: \lancelot{} algorithms on the \CUTEst{} set.}
  \label{fig:ppLanIter}
\end{minipage}
\hspace{1.0cm}
\begin{minipage}[b]{0.45\linewidth}
\centering
  \includegraphics[scale=0.40,angle=-90]{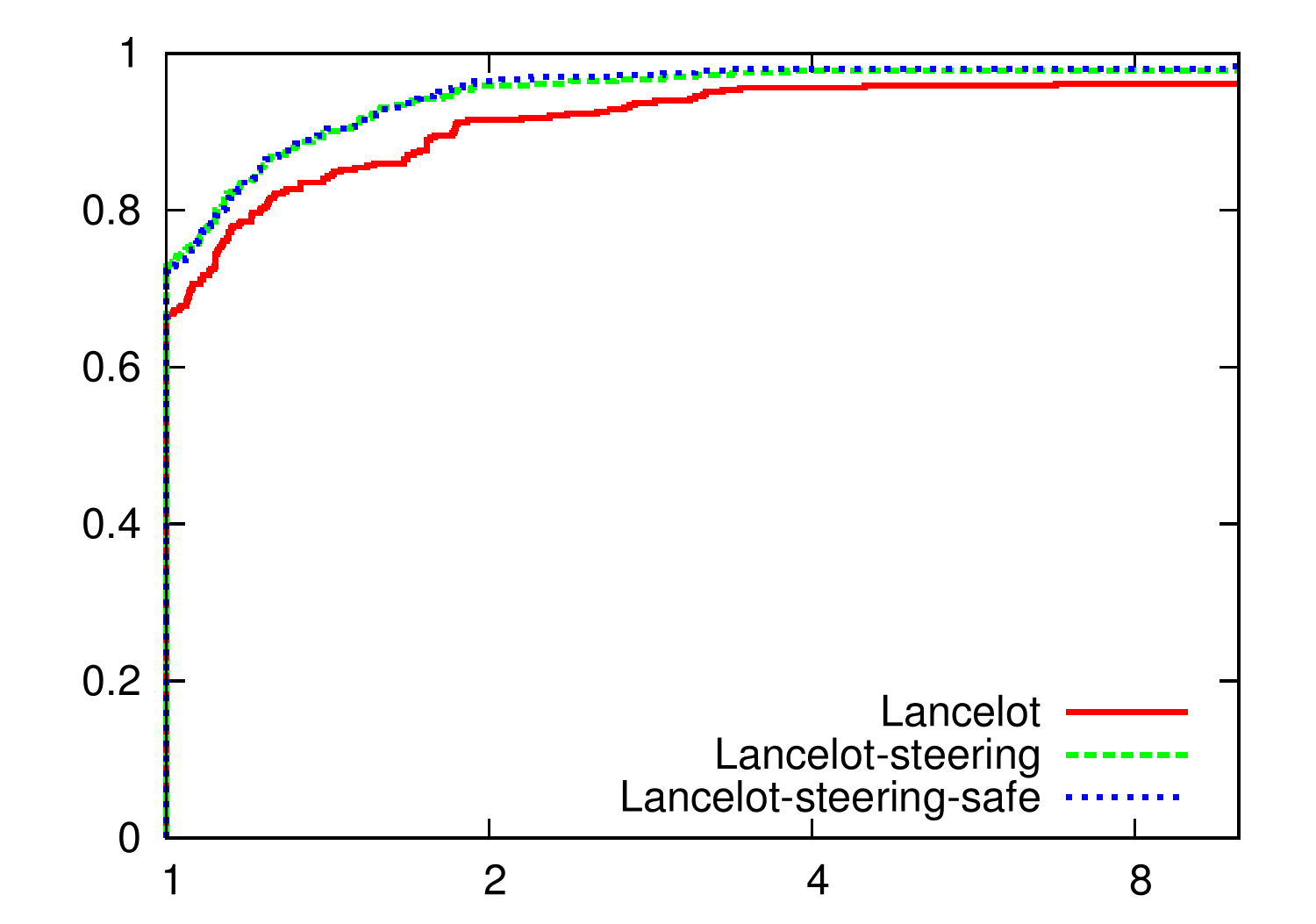}
  \caption{Performance profile for gradient evaluations: \lancelot{} algorithms on the \CUTEst{} set.}
  \label{fig:ppLanGrad}
\end{minipage}
\end{figure}

%

As in \S\ref{sec:matlab-cutest}, it is important to observe the final penalty parameter values yielded by \LancelotS{} and \LancelotSS{} as opposed to those yielded by \Lancelot{}.  For these experiments, we collected this information; see Table~\ref{tab-muvals}.

\begin{table}[ht]
\centering
\caption{Numbers of \CUTEst{} problems for which the final penalty parameter values were in the given ranges.}
\label{tab-muvals}
\begin{tabular}{|lccc|}
  \hline
  \strt $\mu\sub{final}$     & \Lancelot & \LancelotS & \LancelotSS \\ \hline 
  \strt $1$                  & 14        & 1          & 1           \\ 
  \strt $[10^{-1},1)$        & 77        & 1          & 1           \\ 
  \strt $[10^{-2},10^{-1})$  & 47        & 93         & 93          \\ 
  \strt $[10^{-3}, 10^{-2})$ & 27        & 45         & 45          \\ 
  \strt $[10^{-4}, 10^{-3})$ & 18        & 28         & 28          \\ 
  \strt $[10^{-5}, 10^{-4})$ & 15        & 22         & 22          \\ 
  \strt $[10^{-6}, 10^{-5})$ & 12        & 21         & 14          \\ 
  \strt $(0, 10^{-6})$       & 19        & 18         & 25          \\ 
  \hline
\end{tabular}
\end{table}

We make a few remarks about the results in Table~\ref{tab-muvals}.  First, as may have been expected, the \LancelotS{} and \LancelotSS{} algorithms typically reduced the penalty parameter below its initial value, even when \Lancelot{} did not reduce it at all throughout an entire run.  Second, the number of problems for which the final penalty parameter was less than $10^{-4}$ was $171$ for \Lancelot{} and $168$ for \LancelotS{}.  Combining this fact with the previous observation leads us to conclude that steering tended to reduce the penalty parameter from its initial value of $1$, but, overall, it did not decrease it much more aggressively than \Lancelot{}.  Third, it is interesting to compare the final penalty parameter values for \LancelotS{} and \LancelotSS{}.  Of course, these values were equal in any run in which the final penalty parameter was greater than or equal to $10^{-4}$, since this was the threshold value below which safeguarding was activated.  Interestingly, however, \LancelotSS{} actually produced \emph{smaller} values of the penalty parameter compared to \LancelotS{} when the final penalty parameter was smaller than $10^{-4}$.  We initially found this observation to be somewhat counterintuitive, but we believe that it can be explained by observing the penalty parameter updating strategy used by \Lancelot{}.  (Recall that once safeguarding was activated in \LancelotSS{}, the updating strategy became the same used in \Lancelot{}.)  In particular, the decrease factor for the penalty parameter used in \Lancelot{} is $0.1$, whereas the decrease factor used in steering the penalty parameter was $0.7$.  Thus, we believe that \LancelotS{} reduced the penalty parameter more gradually once it was reduced below $10^{-4}$ while \LancelotSS{} could only reduce it in the typical aggressive manner.  (We remark that to (potentially) circumvent this inefficiency in \Lancelot{}, one could implement a different strategy in which the penalty parameter decrease factor is increased as the penalty parameter decreases, but in a manner that still ensures that the penalty parameter converges to zero when infinitely many decreases occur.)  Overall, our conclusion from Table~\ref{tab-muvals} is that steering typically decreases the penalty parameter more than a traditional updating scheme, but the difference is relatively small and we have implemented steering in a way that improves the overall efficiency and reliability of the method.

\section{Conclusion} \label{sec-conclusions}

In this paper, we explored the numerical performance of adaptive updates
to the Lagrange multiplier vector and penalty parameter in AL methods.
Specific to the penalty parameter updating scheme is the use of steering
conditions that guide the iterates toward the feasible region and toward
dual feasibility in a balanced manner.  Similar conditions were first
introduced in~\cite{ByrNW08} for exact penalty functions, but have been
adapted in \cite{curtis12} and this paper to be appropriate for AL-based
methods.  Specifically, since AL methods are not exact (in that, in
general, the trial steps do not satisfy linearized feasibility for any
positive value of the penalty parameter), we allowed for a relaxation of
the linearized constraints.  This relaxation was based on obtaining a
target level of infeasibility that is driven to zero at a modest, but
acceptable, rate.  This approach is in the spirit of AL algorithms since
feasibility and linearized feasibility are only obtained in the limit.
It should be noted that, like other AL algorithms, our adaptive methods
can be implemented matrix-free, i.e., they only require matrix-vector
products.  This is of particular importance when solving large problems
that have sparse derivative matrices.

As with steering strategies designed for exact penalty functions, our
steering conditions proved to yield more efficient and
reliable algorithms than a traditional updating strategy.  This conclusion was
made by performing a variety of numerical tests that involved our own
\Matlab{} implementations and a simple modification of the well-known AL
software \lancelot{}.  To test the potential for the penalty parameter
to be reduced too quickly, we also implemented safeguarded variants of
our steering algorithms.  Across the board, our results indicate that
safeguarding was not necessary and would typically degrade performance
when compared to the unrestricted steering approach.  We feel confident
that these tests clearly show that although our theoretical global
convergence guarantee is weaker than some algorithms (i.e., we cannot
prove that the penalty parameter will remain bounded under a suitable
constraint qualification), this should not be a concern in practice.
Finally, we suspect that the steering strategies described in this paper
would also likely improve the performance of other AL-based methods such
as~\cite{birgin2014practical,kovcvara2003pennon}.



\medskip
\noindent
\emph{Acknowledgments.} 
We would like to thank Sven Leyffer and Victor Zavala from
the Argonne National Laboratory for providing us with the
\AMPL{}~\cite{AMPL,AMPL2} files 
required to test the optimal power flow problems described in \S\ref{sec:opf}. 

\bibliographystyle{gOMS}
\bibliography{al_references}


\appendices

\section{Well-posedness}\label{subsec-wellposed}

Our goal in this appendix is to prove that Algorithm~\ref{alg-aal} is well-posed under Assumption~\ref{ass-posed0}.  Since this assumption is assumed to hold throughout the remainder of this appendix, we do not refer to it explicitly in the statement of each lemma and proof.

\subsection{Preliminary results}

Our proof of the well-posedness of Algorithm~\ref{alg-aal} relies on showing that it will either terminate finitely or will produce an infinite sequence of iterates $\{(x_k,y_k,\mu_k)\}$.  In order to show this, we first require that the \textbf{while} loop that begins at line~\ref{while-1} of Algorithm~\ref{alg-aal} terminates finitely.  Since the same loop appears in the AL trust region method in \cite{curtis12} and the proof of the result in the case of that algorithm is the same as that for Algorithm~\ref{alg-aal}, we need only refer to the result in \cite{curtis12} in order to state the following lemma for Algorithm~\ref{alg-aal}.

\iftoggle{fullpaper}
{
\begin{lemma} \label{lem-is-crit}
  Suppose $l \leq x \leq u$ and let $v$ be any vector in $\Re^n$.  If there exists a scalar $\xi_s > 0$ such that $P[x - \xi_sv] = x$, then $P[x - \xi v] = x$ for all $\xi > 0$.
\end{lemma}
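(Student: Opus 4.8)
The key observation is that the projection $P$ onto the box $\{z : l \leq z \leq u\}$ acts independently on each coordinate, so the claim reduces to a coordinatewise statement: it suffices to show that for each $i \in \{1,\dots,n\}$, if $[P(x-\xi_s v)]_i = x_i$ for some $\xi_s > 0$, then $[P(x-\xi v)]_i = x_i$ for all $\xi > 0$. I would fix such an $i$ and split into three cases according to the sign of $v_i$.

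If $v_i = 0$, then $x_i - \xi v_i = x_i$ for every $\xi$, and since $l_i \leq x_i \leq u_i$ we get $[P(x - \xi v)]_i = x_i$ directly. If $v_i > 0$, then for every $\xi > 0$ we have $x_i - \xi v_i < x_i \leq u_i$; in particular $x_i - \xi_s v_i < x_i \leq u_i$, so the only way the defining formula for $P$ can return $x_i$ at $\xi_s$ is through the first branch, which forces $x_i = l_i$ (and is consistent, since $x_i - \xi_s v_i = l_i - \xi_s v_i \leq l_i$). Having established $x_i = l_i$, for any $\xi > 0$ we again have $x_i - \xi v_i = l_i - \xi v_i \leq l_i$, so $[P(x-\xi v)]_i = l_i = x_i$. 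The case $v_i < 0$ is symmetric: the hypothesis forces $x_i = u_i$, and then $x_i - \xi v_i \geq u_i$ for all $\xi > 0$, giving $[P(x-\xi v)]_i = u_i = x_i$. Combining the three cases over all $i$ yields $P(x - \xi v) = x$ for all $\xi > 0$.

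I do not anticipate any real obstacle here; the argument is an elementary case analysis on the componentwise definition of $P$. The only point requiring a small amount of care is the deduction that $v_i > 0$ forces $x_i$ to sit on the lower bound $l_i$ (and $v_i < 0$ on the upper bound $u_i$), which is where the hypothesis $l \leq x \leq u$ is used to rule out the "interior" and "wrong-endpoint" branches of the projection formula.
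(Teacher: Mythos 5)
Your proof is correct and is essentially the same argument as the paper's: an elementary componentwise case analysis using the box structure of $P$, with the paper organizing the cases by the position of $x_i$ relative to $l_i$ and $u_i$ (deducing the sign of $v_i$) while you organize them by the sign of $v_i$ (deducing that $x_i$ must sit on the appropriate bound). The two are logically interchangeable and equally complete.
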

}
\iftoggle{fullpaper}
{
\begin{proof}
  Since $l \leq x \leq u$, it follows the definition of the projection operator $P$ and the facts that $\xi_s > 0$ and $P[x - \xi_sv] = x$ that
  \begin{align*}
    v_i &\geq 0 \words{if $x_i = l_i$;} \mgap
    v_i \leq 0 \words{if $x_i = u_i$;}\words{and}
    v_i = 0 \words{otherwise.}
  \end{align*}
  It follows that $P[x-\xi v] = x$ for all $\xi > 0$, as desired.\qed
\end{proof}
}

\begin{lemma}[\protect{\cite[Lemma~3.2]{curtis12}}] \label{lem-wd-while-1}
  If line~\ref{while-1} is reached, then $F\sub{AL}(x_k,y_k,\mu) \neq 0$ for all sufficiently small $\mu > 0$.
\end{lemma}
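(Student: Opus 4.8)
The plan is to track $F\sub{AL}(x_k,y_k,\mu)$ as a function of the penalty parameter $\mu$ and to combine a continuity argument with the scale-invariance of projected-gradient stationarity recorded in Lemma~\ref{lem-is-crit}. First I would record, from the formula for $\Grad_x\Lscr$ and the definition of $\pi$, that
\[
  \Grad_x\Lscr(x_k,y_k,\mu) = \mu\big(g(x_k) - J(x_k)\T y_k\big) + J(x_k)\T c_k = \mu\,\Grad_x\ell(x_k,y_k) + J(x_k)\T c_k,
\]
so that $F\sub{AL}(x_k,y_k,\mu) = P\big(x_k - \mu\,\Grad_x\ell(x_k,y_k) - J(x_k)\T c_k\big) - x_k$. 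Since the box projection $P$ is continuous and $\mu\mapsto\Grad_x\Lscr(x_k,y_k,\mu)$ is affine, the map $\mu\mapsto F\sub{AL}(x_k,y_k,\mu)$ is continuous on $[0,\infty)$, and at $\mu=0$ it equals $P(x_k-J(x_k)\T c_k)-x_k = F\sub{FEAS}(x_k)$.

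Next I would use the hypothesis that line~\ref{while-1} is reached, which means that the termination tests in lines~\ref{term-kkt-m1} and~\ref{term-isp-m1} both evaluated to false: $F\sub{OPT}(x_k,y_k)\neq 0$, and it is not the case that both $\twonorm{c_k}>0$ and $F\sub{FEAS}(x_k)=0$. I would then split into two cases. If $\twonorm{c_k}>0$, the second fact forces $F\sub{FEAS}(x_k)\neq 0$; since $F\sub{AL}(x_k,y_k,0)=F\sub{FEAS}(x_k)\neq 0$ and $F\sub{AL}(x_k,y_k,\cdot)$ is continuous, there is $\bar\mu>0$ with $F\sub{AL}(x_k,y_k,\mu)\neq 0$ for all $\mu\in(0,\bar\mu)$, which is the claim. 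If instead $c_k=0$, then $F\sub{OPT}(x_k,y_k)=\big(F\sub{L}(x_k,y_k),-c_k\big)=\big(F\sub{L}(x_k,y_k),0\big)$, so $F\sub{OPT}(x_k,y_k)\neq 0$ forces $F\sub{L}(x_k,y_k)\neq 0$; and with $c_k=0$ the display above collapses to $F\sub{AL}(x_k,y_k,\mu)=P\big(x_k-\mu\,\Grad_x\ell(x_k,y_k)\big)-x_k$, whose $\mu\to 0$ limit is the unhelpful value $0$, so continuity alone says nothing.

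The main obstacle is exactly this last case, and it is dispatched by Lemma~\ref{lem-is-crit} applied with $v:=\Grad_x\ell(x_k,y_k)$ (the iterates remain feasible for the bounds by construction, so $l\le x_k\le u$): since $P(x_k-v)-x_k=F\sub{L}(x_k,y_k)\neq 0$, the contrapositive of the lemma yields $P(x_k-\mu v)\neq x_k$ for \emph{every} $\mu>0$, hence $F\sub{AL}(x_k,y_k,\mu)\neq 0$ for all $\mu>0$ --- even more than required. Combining the two cases proves the lemma. The only routine points to verify are the continuity of $P$ (coordinatewise it is a median of continuous functions, hence continuous) and the bookkeeping that ``line~\ref{while-1} is reached'' is precisely the conjunction of the negations of the two preceding \textbf{if}-tests; no quantitative estimate is needed.
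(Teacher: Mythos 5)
Your proposal is correct and uses the same two ingredients as the paper's own proof: the observation that $F\sub{AL}(x_k,y_k,\mu)\to F\sub{FEAS}(x_k)$ as $\mu\to 0$, a case split on whether $c_k=0$, and Lemma~\ref{lem-is-crit} to handle the $c_k=0$ case. The only difference is cosmetic --- you argue directly by cases on the negated termination tests, whereas the paper assumes a sequence $\xi_h\to 0$ with $F\sub{AL}(x_k,y_k,\xi_h)=0$ and derives a contradiction with lines~\ref{term-kkt} and~\ref{term-isp}.
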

\iftoggle{fullpaper}
{
\begin{proof}
  Suppose that line~\ref{while-1} is reached and, to reach a contradiction, suppose also that there exists an infinite positive sequence $\{\xi_h\}_{h\geq0}$ such that $\xi_h \to 0$ and
  \begin{equation} \label{muj}
    F\sub{AL}(x_k,y_k,\xi_h) = P\big[x_k-\xi_h(g_k-J_k\T y_k) - J_k\T c_k\big]-x_k = 0 \words{for all $h\geq0$.}
  \end{equation}
  It follows from~\eqref{muj} and the fact that $\xi_h \to 0$ that
  \begin{equation*} \label{lim-is-zero}
    F\sub{FEAS}(x_k) = P[x_k- J_k\T c_k]-x_k = 0.
  \end{equation*}
  If $c_k \neq 0$, then Algorithm~\ref{alg-aal} would have terminated in line~\ref{term-isp}; hence, since line~\ref{while-1} is reached, we must have $c_k = 0$.  We then may conclude from \eqref{muj}, the fact that $\{\xi_h\}$ is positive for all $h$, and Lemma~\ref{lem-is-crit} that $F\sub{L}(x_k,y_k) = 0$.  Combining this with~\eqref{F-zero} and the fact that $c_k=0$, it follows that $F\sub{OPT}(x_k,y_k) = 0$ so that $(x_k,y_k)$ is a first-order optimal point for \eqref{nep}.  However, under these conditions, Algorithm~\ref{alg-aal} would have terminated in line~\ref{term-kkt}.  Hence, we have a contradiction to the existence of the sequence $\{\xi_h\}$.  \qed 
\end{proof}
}

Next, since the Cauchy steps employed in Algorithm~\ref{alg-aal} are similar to those employed in the method in \cite{curtis12}, we may state the following lemma showing that Algorithms~\ref{alg-rCk} and~\ref{alg-sCk} are well defined when called in lines~\ref{line-rCk}, \ref{line-sCk-1}, and~\ref{line-sCk-2} of Algorithm~\ref{alg-aal}.  It should be noted that a slight difference between Algorithm~\ref{alg-sCk} and the similar procedure in \cite{curtis12} is the use of the convexified model $\tilde{q}$ in \eqref{c2}.  However, we claim that this difference does not affect the veracity of the result.

\begin{lemma}[\protect{\cite[Lemma~3.3]{curtis12}}]\label{lem-algo}
  The following hold true:
  \begin{itemize}
    \item[(i)] The computation of $(\betaC_k,\rCk,\eps_k,\Gamma_k)$ in line~\ref{line-rCk} is well defined and yields $\Gamma_k \in(1,2]$ and $\eps_k\in[0,\eps_r)$.
    \item[(ii)] The computation of $(\alphaC_k,\sCk)$ in lines~\ref{line-sCk-1} and~\ref{line-sCk-2} is well defined. 
  \end{itemize}
\end{lemma}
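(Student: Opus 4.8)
Since Algorithms~\ref{alg-rCk} and~\ref{alg-sCk} differ from their counterparts in \cite{curtis12} only in the appearance of the convexified model $\tilde q$ in the Cauchy condition~\eqref{c2}, the plan is to reproduce the proof of \cite[Lemma~3.3]{curtis12} and to check that this single difference is immaterial. Two elementary facts about the Euclidean projection $P$ onto $[l,u]$ are used throughout. (P1)~For any direction $d$ and any $\beta>0$, the vector $s:=P(x_k-\beta d)-x_k$ satisfies $s\T d\le-\twonorm{s}^2/\beta$ (test the variational inequality for $P$ at the feasible point $x_k$) and $\twonorm{s}\le\beta\twonorm{d}$ (nonexpansiveness, using $Px_k=x_k$). (P2)~The map $\beta\mapsto\twonorm{P(x_k-\beta d)-x_k}$ is nondecreasing on $[0,\infty)$ and equals $0$ at $\beta=0$. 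Assumption~\ref{ass-posed0} makes $J_k$ and $\Hess_{xx}\ell(x_k,y_k)$ finite, so the matrix norms below are finite.

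For part~(i) I would argue in four short steps. \emph{(a)~$l_k$ exists.} By (P2), $\twonorm{P(x_k-\gamma^{j}J_k\T c_k)-x_k}$ is nonincreasing in $j$ and tends to $0$, so a smallest $j\ge0$ with this quantity $\le\TRk$ exists when $\TRk>0$; when $\TRk=0$ we have $F\sub{FEAS}(x_k)=0$, so $j=0$ works and is smallest. \emph{(b)~$\Gamma_k\in(1,2]$.} It is $\le2$ by construction; if $l_k>0$ (so $\TRk>0$), minimality of $l_k$ forces $\twonorm{P(x_k-\gamma^{l_k-1}J_k\T c_k)-x_k}>\TRk$, whence the averaged quantity exceeds $1$, and if $l_k=0$ then $\Gamma_k=2$. \emph{(c)~The loop at line~\ref{line-rCk-while} terminates.} Since $\betaC_k$ only decreases from $\gamma^{l_k}$, (P2) gives $\twonorm{\rCk}\le\TRk$ throughout, so a failure of~\eqref{c1} is always a failure of its model-decrease part; writing $\delmod_v(\rCk;x_k)=-\rCk\T J_k\T c_k-\half\twonorm{J_k\rCk}^2$ and using (P1), condition~\eqref{c1} reduces to $(1-\eps_r)\bigl(-\rCk\T J_k\T c_k\bigr)\ge\half\twonorm{J_k\rCk}^2$, which is implied once $\betaC_k\le 2(1-\eps_r)/\twonorm{J_k}^2$ and so holds after finitely many multiplications by $\gamma$ (the case $\rCk=0$ being trivial, as then~\eqref{c1} already holds). \emph{(d)~$\eps_k\in[0,\eps_r)$.} It starts at $0$ and is only replaced by a maximum with further quantities, so $\eps_k\ge0$; each such quantity is $-\delmod_v(\rCk;x_k)/(\rCk\T J_k\T c_k)$ for some $\rCk\ne0$ (necessarily nonzero, since $\rCk=0$ satisfies~\eqref{c1}) that failed~\eqref{c1} with its norm part intact, i.e.\ with $\delmod_v(\rCk;x_k)<-\eps_r\,\rCk\T J_k\T c_k$; since $\rCk\T J_k\T c_k<0$ by (P1), dividing reverses this inequality into $-\delmod_v(\rCk;x_k)/(\rCk\T J_k\T c_k)<\eps_r$, so $\eps_k<\eps_r$ is maintained.

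For part~(ii), the argument for Algorithm~\ref{alg-sCk} is the direct analogue. The call in line~\ref{line-sCk-1} follows the loop at line~\ref{while-1} and the call in line~\ref{line-sCk-2} follows enough reductions of $\mu_k$ in line~\ref{steering-decrease} that $F\sub{AL}(x_k,y_k,\mu_k)\neq0$ (Lemma~\ref{lem-wd-while-1}), so in both cases the restrictions $\mu_k>0$, $\Theta_k=\TRALk>0$, $\eps_k\ge0$ hold; it remains to show the \textbf{while} loop of Algorithm~\ref{alg-sCk} terminates. The requirement $\twonorm{\sCk}\le\TRALk$ in~\eqref{c2} holds for all small enough $\alphaC_k$ by (P2), so only the decrease inequality is eventually at issue. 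With $H_k:=\mu_k\Hess_{xx}\ell(x_k,y_k)+J_k\T J_k$ and $\sCk\neq0$, one has $\Deltait\tilde q(\sCk;x_k,y_k,\mu_k)=-\Grad_x\Lscr(x_k,y_k,\mu_k)\T\sCk-\max\{\half\sCk\T H_k\sCk,\,0\}$, and $\max\{\half\sCk\T H_k\sCk,0\}\le\half\twonorm{H_k}\twonorm{\sCk}^2$ together with $-\Grad_x\Lscr(x_k,y_k,\mu_k)\T\sCk\ge\twonorm{\sCk}^2/\alphaC_k$ from (P1) shows that the ratio of these two quantities is at most $\half\alphaC_k\twonorm{H_k}$, hence below $1-\half(\eps_k+\eps_r)>0$ (using $\eps_k+\eps_r<2$ from part~(i)) once $\alphaC_k\le(2-\eps_k-\eps_r)/\twonorm{H_k}$, which occurs after finitely many multiplications by $\gamma$. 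This is precisely where $\tilde q$ enters: the term $\max\{\cdot,0\}$ only diminishes the predicted decrease relative to the plain quadratic model of \cite{curtis12}, and the bound just derived absorbs that, so the proof of \cite[Lemma~3.3]{curtis12} carries over unchanged.

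I expect the one genuinely delicate point to be step~(d): one must notice that once $\betaC_k$ has dropped to $\gamma^{l_k}$ the norm part of~\eqref{c1} can never afterwards fail, so that the running maximum defining $\eps_k$ is taken only over iterates whose sole defect is the model-decrease inequality, and then observe that this defect, after the sign-reversing division by $\rCk\T J_k\T c_k<0$, is exactly what keeps $\eps_k$ strictly below $\eps_r$. Everything else is the routine finite-termination argument for a backtracking projected-gradient Cauchy point.
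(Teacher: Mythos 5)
Your proof is correct and reaches the same conclusions by essentially the same route as the paper's, but it is more self-contained in one respect: where the paper disposes of the finite termination of both backtracking loops by citing \cite[Theorem~4.2]{Mor88}, you prove termination directly from the projection inequalities (P1)--(P2), obtaining explicit thresholds $\betaC_k \le 2(1-\eps_r)/\twonorm{J_k}^2$ and $\alphaC_k \le (2-\eps_k-\eps_r)/\twonorm{H_k}$ (with $H_k := \mu_k\Hess_{xx}\ell(x_k,y_k)+J_k\T J_k$) below which \eqref{c1} and \eqref{c2} must hold.  You also make explicit a point the paper compresses into one line when asserting $\eps_k\in[0,\eps_r)$: because $\betaC_k$ starts at $\gamma^{l_k}$ and only decreases, the trust-region half of \eqref{c1} can never fail after that point, so every update of $\eps_k$ is driven solely by a violation of the decrease inequality, and the sign-reversing division by $\rCk\T J_k\T c_k<0$ is exactly what keeps the running maximum strictly below $\eps_r$.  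Your handling of the convexified model in part~(ii)---bounding $\max\{\half\sCk\T H_k\sCk,0\}\le\half\twonorm{H_k}\twonorm{\sCk}^2$ and observing that the $\max$ only shrinks the predicted decrease---is precisely the observation that justifies the paper's claim that the convexification ``does not affect the veracity of the result.''

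One small imprecision: you assert that $\Theta_k=\TRALk>0$ at line~\ref{line-sCk-2}, but that call is executed on every pass of the \textbf{while} loop at line~\ref{while-2}, including passes where the freshly decreased $\mu_k$ happens to give $F\sub{AL}(x_k,y_k,\mu_k)=0$; Lemma~\ref{lem-wd-while-1} guarantees nonvanishing only for all sufficiently small $\mu$, not for every intermediate value encountered along the way.  The paper therefore treats $\Theta_k=0$ as a separate (trivial) case: then $F\sub{AL}(x_k,y_k,\mu_k)=0$, the first trial step is $\sCk=0$, and \eqref{c2} holds immediately, so Algorithm~\ref{alg-sCk} returns with $\alphaC_k=1$.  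Your argument for part~(ii), which assumes $\sCk\neq0$, should be supplemented by this degenerate case; nothing else in your proof needs to change.
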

\iftoggle{fullpaper}
{
\begin{proof}
  We start by proving part (i).  Consider the call to Algorithm~\ref{alg-rCk} made during line~\ref{line-rCk} of Algorithm~\ref{alg-aal}.  If $\theta_k = 0$, then, since $\delta_k > 0$ by construction, we must have $F\sub{FEAS}(x_k) = 0$, which in turn implies that Algorithm~\ref{alg-rCk} trivially computes $l_k = 0$, $\Gamma_k = 2$, $\betaC_k = 1$, and~$\eps_k = 0$.  In this case, (i) clearly holds, so now let us suppose that $\theta_k > 0$.  If $l_k = 0$, then $\Gamma_k = 2$; otherwise, if $l_k > 0$, then it follows that either $\Gamma_k = 2$ or   
  $$
    2 > \Gamma_k = \half\left(1 + \frac{\twonorm{P\big[x_k-\gamma^{l_k-1}J_k\T c_k\big]-x_k}}{\theta_k}\right) > \half\left(1 + \frac{\theta_k}{\theta_k}\right) = 1.
  $$
  Next, the \textbf{while} loop at line~\ref{line-rCk-while} terminates finitely as shown by~\cite[Theorem~4.2]{Mor88} since $\eps_r\in(0,1)$. The fact that $\eps_k\in[0,\eps_r)$ holds since $\eps_r\in(0,1)$ by choice, $\eps_k$ is initialized to zero in Algorithm~\ref{alg-rCk}, and every time $\eps_k$ is updated we have $-\delmod_v(\rCk;x_k) / \rCk\T J_k\T c_k < \eps_r$ (by the condition of the \textbf{while} loop).

  Now consider part (ii).  Regardless of how line~\ref{line-sCk-1} or~\ref{line-sCk-2} is reached in Algorithm~\ref{alg-aal}, we have that $\mu_k > 0$ and $\Theta_k \geq 0$ by construction, and from part (i) we have that $\Gamma_k \in (1,2]$ and $\eps_k\in[0,\eps_r)$.  If $\Theta_k = 0$, then, since $\delta_k > 0$ by construction, it follows that $F\sub{AL}(x_k,y_k,\mu_k) = 0$, and therefore Algorithm~\ref{alg-sCk} terminates with $\alphaC_k = 1$ and $\sCk=0$.  Thus, we may continue supposing that $\Theta_k > 0$.  We may now use the fact that
  $$
  0 < \eps_r/2 \leq (\eps_k+\eps_r)/2 < \eps_r < 1
  $$
  and~\cite[Theorem~4.2]{Mor88} to say that Algorithm~\ref{alg-sCk} will terminate finitely with $(\alphaC_k,\sCk)$ satisfying~\eqref{c2}. \qed
\end{proof}
}

\newcommand{\lemmodels}{Let $(\betaC_k,\rCk,\eps_k,\Gamma_k) \gets
 \text{\sc Cauchy\_feasibility}(x_k,\TRk)$ with $\TRk$ defined
 by~\eqref{sandy} and, as quantities dependent on the penalty parameter
 $\mu > 0$, let $(\alphaC_k(\mu),\sCk(\mu)) \gets \text{\sc
   Cauchy\_AL}(x_k,y_k,\mu,\Theta_k(\mu),\eps_k)$ with
 $\TRALk(\mu):=\Gamma_k\delta\twonorm{F\sub{AL}(x_k,y_k,\mu)}$ (see
 \eqref{sandy-AL}). Then, the following hold true:

  \begin{subequations}\label{lemma1}
    \begin{align}
      \lim_{\mu\to 0} \left( \max_{\twonorm{s}\leq 2\theta_k} 
        |\tilde{q}(s;x_k,y_k,\mu)-\qv(s;x_k) |\right) &= 0, 
         \label{models-converge} \\
      \lim_{\mu\to 0} \Grad_x \Lscr(x_k,y_k,\mu) &= J_k^Tc_k, 
         \label{grads-converge} \\
      \lim_{\mu\to 0} \sCk(\mu) &= \rCk, \label{cauchys-converge} \\
      \words{and} \lim_{\mu\to 0} \delmodv(\sCk(\mu);x_k) &= \delmodv(\rCk;x_k).
          \label{modvals-converge}
    \end{align}
  \end{subequations}
}

\iftoggle{arxivpaper}{
  The next result, similar to \cite[Lemma~3.4]{curtis12}, highlights
  critical relationships between $\qv$ and $\tilde{q}$ as $\mu \to 0$.
  Indeed, much of the proof follows exactly the same logic as
  \cite[Lemma~3.4]{curtis12}, but we provide a complete proof to account
  for our present use of the convexified model $\tilde{q}$ and the
  differences in the trust region radii for the subproblems employed in
  the algorithms.  This result is crucial for showing that the steering
  condition~\eqref{cond2b} is satisfied for all sufficient small $\mu$
  (see Lemma~\ref{keylemma2}).

  \begin{lemma}\label{lem-models}
   \lemmodels
   \end{lemma}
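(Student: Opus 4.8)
The plan is to establish the four limits in the order \eqref{grads-converge}, \eqref{models-converge}, \eqref{cauchys-converge}, \eqref{modvals-converge}, following the skeleton of \cite[Lemma~3.4]{curtis12} but taking care of the convexifying $\max$ in $\tilde q$ and of the inflation factor $\Gamma_k$ in the trust-region radius \eqref{sandy-AL}. Limit \eqref{grads-converge} is immediate from the closed form $\Grad_x\Lscr(x_k,y_k,\mu)=\mu\big(g_k-J_k\T\pi(x_k,y_k,\mu)\big)=\mu(g_k-J_k\T y_k)+J_k\T c_k$: with $k$ fixed, letting $\mu\to0$ leaves $J_k\T c_k$. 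For \eqref{models-converge}, I would write $\tilde q(s;x_k,y_k,\mu)-\qv(s;x_k)$ as the sum of a constant term $\mu\,\ell(x_k,y_k)$, a linear term $(\Grad_x\Lscr(x_k,y_k,\mu)-J_k\T c_k)\T s$, and a quadratic term; since $J_k\T J_k\succeq0$ the $\max\{\cdot,0\}$ inside $\qv$ is redundant, so the quadratic term equals $\max\{\half s\T(\mu\Hess_{xx}\ell(x_k,y_k)+J_k\T J_k)s,\,0\}-\half s\T J_k\T J_k s$, which by $|\max\{a,0\}-\max\{b,0\}|\le|a-b|$ is bounded in absolute value by $\tfrac\mu2|s\T\Hess_{xx}\ell(x_k,y_k)s|$. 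Over $\twonorm s\le2\theta_k$ each of the three terms is $O(\mu)$ uniformly (using \eqref{grads-converge} for the linear one), which gives \eqref{models-converge}. Throughout, if $\theta_k:=\TRk=\delta\twonorm{F\sub{FEAS}(x_k)}=0$ then $F\sub{FEAS}(x_k)=0$, Algorithm~\ref{alg-rCk} returns $\rCk=0$, and $\twonorm{\sCk(\mu)}\le\TRALk(\mu)\to0$, so \eqref{cauchys-converge}--\eqref{modvals-converge} hold trivially; hence I would assume $\theta_k>0$ from here on.

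The crux is \eqref{cauchys-converge}, and the plan is to show that for all sufficiently small $\mu$ the backtracking loop in Algorithm~\ref{alg-sCk} stops at exactly $\alphaC_k(\mu)=\betaC_k$; then \eqref{cauchys-converge} follows from $\sCk(\mu)=P(x_k-\betaC_k\Grad_x\Lscr(x_k,y_k,\mu))-x_k\to P(x_k-\betaC_k J_k\T c_k)-x_k=\rCk$ via continuity of $P$ and \eqref{grads-converge}. Let $l_k$ be the exponent chosen in Algorithm~\ref{alg-rCk} and let $m_k\ge l_k$ be the integer with $\betaC_k=\gamma^{m_k}$; for integers $j\ge0$ set $\hat r_j:=P(x_k-\gamma^j J_k\T c_k)-x_k$ and $\tilde r_j(\mu):=P(x_k-\gamma^j\Grad_x\Lscr(x_k,y_k,\mu))-x_k$, so $\tilde r_j(\mu)\to\hat r_j$, and by \eqref{grads-converge} also $F\sub{AL}(x_k,y_k,\mu)\to F\sub{FEAS}(x_k)$, hence $\TRALk(\mu)\to\Gamma_k\theta_k$. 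From the construction in Algorithm~\ref{alg-rCk} I would record: (a) $\hat r_{m_k}=\rCk$ satisfies \eqref{c1} and $\twonorm{\rCk}\le\theta_k$; (b) for $l_k\le j<m_k$, $\hat r_j$ violates \eqref{c1}, and since $\eps_k$ was updated to the running maximum of $-\delmodv(\rCk;x_k)/\rCk\T J_k\T c_k$, one has $\delmodv(\hat r_j;x_k)\le\eps_k b_j$ where $b_j:=-\hat r_j\T J_k\T c_k$; (c) for $0\le j<l_k$, the map $\beta\mapsto\twonorm{P(x_k-\beta J_k\T c_k)-x_k}$ being nondecreasing on $[0,\infty)$ together with the formula for $\Gamma_k$ gives $\twonorm{\hat r_j}\ge\twonorm{\hat r_{l_k-1}}>\Gamma_k\theta_k$ (using $\twonorm{\hat r_{l_k-1}}>\theta_k$ by minimality of $l_k$). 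Moreover, since $\theta_k>0$ no $\hat r_j$ vanishes, so by the variational inequality for the projection $b_j\ge\twonorm{\hat r_j}^2/\gamma^j>0$ for every $j$, and $-\rCk\T J_k\T c_k\ge\delmodv(\rCk;x_k)>0$ by the identity $\delmodv(r;x)=-r\T J\T c-\half\twonorm{Jr}^2$ and Lemma~\ref{keylemma1}(i).

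To conclude \eqref{cauchys-converge}, I would test \eqref{c2} along $\alpha=\gamma^j$ for $j=0,\dots,m_k$ (finitely many, so one threshold on $\mu$ suffices for all). If $j<l_k$: $\twonorm{\tilde r_j(\mu)}\to\twonorm{\hat r_j}>\Gamma_k\theta_k=\lim_{\mu\to0}\TRALk(\mu)$, so the trust-region bound in \eqref{c2} is violated for $\mu$ small. If $l_k\le j<m_k$: since $\twonorm{\hat r_j}\le\theta_k<2\theta_k$, combining \eqref{models-converge} with continuity of $\qv$ gives $\Deltait\tilde q(\tilde r_j(\mu);x_k,y_k,\mu)\to\delmodv(\hat r_j;x_k)$ while $\tilde r_j(\mu)\T\Grad_x\Lscr(x_k,y_k,\mu)\to\hat r_j\T J_k\T c_k$; by (b), $b_j>0$, and $\eps_k<\eps_r$ (Lemma~\ref{lem-algo}(i)) the limiting relation $\delmodv(\hat r_j;x_k)\le\eps_k b_j<\tfrac{\eps_k+\eps_r}{2}b_j$ is strict, so the model inequality in \eqref{c2} fails for $\mu$ small. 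If $j=m_k$: $\twonorm{\tilde r_{m_k}(\mu)}\to\twonorm{\rCk}\le\theta_k<\Gamma_k\theta_k$, so the trust-region bound holds for $\mu$ small, and \eqref{models-converge} together with \eqref{c1} gives $\Deltait\tilde q(\tilde r_{m_k}(\mu);x_k,y_k,\mu)\to\delmodv(\rCk;x_k)\ge-\eps_r\rCk\T J_k\T c_k>-\tfrac{\eps_k+\eps_r}{2}\rCk\T J_k\T c_k$, strict because $-\rCk\T J_k\T c_k>0$ and $\eps_k<\eps_r$, so \eqref{c2} holds for $\mu$ small. Hence $\alphaC_k(\mu)=\gamma^{m_k}=\betaC_k$, which yields \eqref{cauchys-converge}; and \eqref{modvals-converge} is then immediate from \eqref{cauchys-converge} and the continuity of $\qv$.

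The main obstacle is the index-matching argument above --- ruling out that \eqref{c2} is satisfied prematurely at some $\alpha>\betaC_k$ --- and this is precisely where the two devices inherited from \cite{curtis12} are needed: the inflation factor $\Gamma_k>1$ in \eqref{sandy-AL} guarantees that every step rejected by the norm test of Algorithm~\ref{alg-rCk} ($j<l_k$) is still rejected by the strictly larger norm test of Algorithm~\ref{alg-sCk}, and the accumulated slack $\eps_k<\eps_r$ guarantees that the model condition fails with a strict margin at every step Algorithm~\ref{alg-rCk} rejected for violating \eqref{c1} ($l_k\le j<m_k$). The convexifying $\max\{\cdot,0\}$ in $\tilde q$, by contrast, is harmless for \eqref{models-converge} precisely because $J_k\T J_k\succeq0$ makes the matching $\max$ in $\qv$ redundant; everything else reduces to continuity-in-$\mu$ arguments with $x_k$ and $y_k$ held fixed.
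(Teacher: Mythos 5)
Your proposal is correct and follows essentially the same route as the paper's proof: the same closed-form computation for \eqref{grads-converge}, the same $O(\mu)$ decomposition of $\tilde{q}-\qv$ over the ball of radius $2\theta_k$ (your use of the $1$-Lipschitz property of $t\mapsto\max\{t,0\}$ is a marginally cleaner packaging of the same bound), and the same index-matching argument for \eqref{cauchys-converge}, in which $\Gamma_k>1$ rules out termination at $j<l_k$ via the norm test and the accumulated slack $\eps_k<\eps_r$ rules out premature satisfaction of the decrease condition at $l_k\le j<l_\beta$ while guaranteeing strict satisfaction at $j=l_\beta$. The degenerate case $F\sub{FEAS}(x_k)=0$ and the final deduction of \eqref{modvals-converge} by continuity also coincide with the paper's treatment.
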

\begin{proof}
  Since $x_k$ and $y_k$ are fixed during iteration $k$, for ease of
  exposition we often drop these quantities from function dependencies
  for the purposes of this proof.  From the definitions of $\qv$ and
  $\tilde{q}$, it follows that for some constants $M_1 > 0$ and $M_2 >
  0$ independent of $\mu$ we have
  \begin{align*}
    \max_{\twonorm{s}\leq 2\theta_k} |\tilde{q}(s;\mu)-\qv(s)| &= \max_{\twonorm{s}\leq 2\theta_k} |\mu \ell_k + \mu \Grad_x \ell_k\T s + \max\{\tfrac{\mu}{2}s\T \Grad_{xx} \ell_k s, -\half\twonorm{J_k s}^2\}| \\
  &\leq \mu M_1 + \max\{\mu M_2, -\half\twonorm{J_k s}^2\}.
  \end{align*}
  Since the right-hand side of this expression vanishes as $\mu \to 0$, we have \eqref{models-converge}.  Further,
  \begin{equation*}
    \Grad_x\Lscr(x_k,y_k,\mu) - J_k^Tc_k = \mu(g_k - J_k^Ty_k),
  \end{equation*}
  which implies that \eqref{grads-converge} holds.

  We now show that~\eqref{cauchys-converge} holds.  Our proof considers two cases depending on the value $F\sub{FEAS}(x_k)$.  Throughout consideration of these cases, it should be observed that all quantities in Algorithm~\ref{alg-rCk} are unaffected by $\mu$, so they can be considered as fixed quantities.

  \begin{enumerate}
    \item[\textbf{Case 1:}] If $F\sub{FEAS}(x_k) = 0$, then $\TRk = \delta\twonorm{F\sub{FEAS}(x_k)} = 0$, from which it follows that $\rCk = 0$ and $\delmodv(\rCk) = 0$.  Furthermore, from \eqref{grads-converge}, we have $\TRALk(\mu) \to 0$ as $\mu\to0$, which means $\sCk(\mu) \to 0 = \rCk$, as desired.
    \item[\textbf{Case 2:}] Now suppose that $F\sub{FEAS}(x_k) \neq 0$.  In the following arguments, we define the following functions of a nonnegative integer $l$ and positive scalar $\mu$:
    \begin{equation*}
      r_k(l) = P(x_k-\gamma^{l} J_k \T c_k) - x_k \wordss{and} s_k(l,\mu) = P(x_k-\gamma^{l} \Grad_x \Lscr(\mu)) - x_k.
    \end{equation*}
    We also define $l_\beta \geq 0$ to be the integer such that $\betaC_k = \gamma^{l_\beta}$ (see Algorithm~\ref{alg-rCk}), which implies that
    \begin{equation} \label{r-need}
      \rCk=r_k(l_\beta).
    \end{equation}
    
    We have as a consequence of~\eqref{grads-converge} that
    \begin{equation*}
      \lim_{\mu\to 0} s_k(l,\mu) = r_k(l) \words{for any} l \geq 0.
    \end{equation*}
    In particular, this implies with \eqref{r-need} that
    \begin{equation} \label{s-to-r}
      \lim_{\mu\to 0} s_k(l_\beta,\mu) = r_k(l_\beta) = \rCk.
    \end{equation}
    Thus, \eqref{cauchys-converge} follows as long as
    \begin{equation} \label{same-cs}
      \sCk(\mu) = s_k(l_\beta,\mu) \wordss{for all sufficiently small} \mu>0.
    \end{equation}
    Since the computation of $\sCk(\mu)$ (via the $\text{\sc Cauchy\_AL}$ routine stated as Algorithm~\ref{alg-sCk}) computes a nonnegative integer $l_{\alpha,\mu}$ such that
    \begin{equation*}
      \sCk(\mu) = P(x_k - \gamma^{l_{\alpha,\mu}}\Grad_x\Lscr(\mu)) - x_k,
    \end{equation*}
    it follows that \eqref{same-cs} can be proved by showing that $l_{\alpha,\mu} = l_\beta$ for all sufficiently small $\mu > 0$.  As a preliminary result in the proof of this fact, we first show that for $l_k$ computed in Algorithm~\ref{alg-rCk} we have
    \begin{equation} \label{l-bound}
       \min\{l_\beta,l_{\alpha,\mu}\} \geq l_k \wordss{for all sufficiently small} \mu > 0.
    \end{equation}
    Indeed, if $l_k = 0$, then~\eqref{l-bound} holds trivially.  Thus,
    let us suppose that $l_k > 0$.  According to the procedures in
    Algorithm~\ref{alg-rCk}, it is clear that $l_\beta \geq l_k$.
    Hence, we may turn our attention to $l_{\alpha,\mu}$.  From the
    definition of $\Theta_k(\mu)$ (in the statement of this lemma),
    \eqref{grads-converge}, the manner in which $\Gamma_k$ is set in
    Algorithm~\ref{alg-rCk}, the fact that $\theta_k > 0$, and since
    $\twonorm{P(x_k - \gamma^{l_k-1}J_k\T c_k) - x_k} > \theta_k$ due to
    the manner in which $l_k$ is set in Algorithm~\ref{alg-rCk}, we have
    that
    \begin{align*}
      \lim_{\mu \to 0} \Theta_k(\mu)
        &=\lim_{\mu \to 0} \Gamma_k \delta\twonorm{F\sub{AL}(x_k,y_k,\mu)} = \Gamma_k \delta\twonorm{F\sub{FEAS}(x_k)} = \Gamma_k \theta_k \\
        &= \min\left\{2, \half\left(1 + \frac{\twonorm{P(x_k-\gamma^{l_k-1}J_k\T c_k)-x_k}}{\theta_k}\right)\right\}\theta_k\\ 
        &= \min\left\{2\theta_k, \half\left(\theta_k + \twonorm{P(x_k-\gamma^{l_k-1}J_k\T c_k)-x_k}\right)\right\} \\
        &\in \big(\theta_k,\twonorm{P(x_k-\gamma^{l_k-1}J_k\T c_k)-x_k}\big).
    \end{align*}
    Along with~\eqref{grads-converge}, this implies that for all sufficiently small $\mu > 0$ we have
    \begin{equation*}
      \lim_{\mu \to 0} \twonorm{P(x_k-\gamma^{l_k-1}\Grad_x\Lscr(\mu))-x_k} = \twonorm{P(x_k-\gamma^{l_k-1}J_k\T c_k)-x_k} > \Theta_k(\mu).
    \end{equation*}
    This shows that $l_{\alpha,\mu} \geq l_k$ holds for all sufficiently small $\mu > 0$.  Consequently, we have~\eqref{l-bound}.

  \smallskip
    Having ensured that \eqref{l-bound} holds, we proceed to prove that
    $l_{\alpha,\mu} = l_\beta$ for all sufficiently small $\mu > 0$.  It
    follows from the definition of $l_\beta$, \eqref{r-need}, the
    procedures of Algorithm~\ref{alg-rCk} (e.g., the manner in which
    $\eps_k$ is set), and part (i) of Lemma~\ref{lem-algo} that
    \begin{equation}\label{ratio-ineq-0}
      \begin{aligned}
        -\frac{\delmodv(\rCk)}{\rCk\T J_k\T c_k} = -\frac{\delmodv\big(r_k(l_\beta)\big)}{r_k(l_\beta)\T J_k\T c_k} &\geq \epsilon_r \\
        \wordss{and} -\frac{\delmodv\big(r_k(l)\big)}{r_k(l)\T J_k\T c_k} \leq \epsilon_k &< \epsilon_r \words{for all integers $l_k \leq l < l_\beta$.}
      \end{aligned}
    \end{equation}
    (Here, it is important to note that~\cite[Theorem~12.1.4]{ConGT00a}
    can be invoked to ensure that all denominators
    in~\eqref{ratio-ineq-0} are negative.)  It follows
    from~\eqref{grads-converge}, \eqref{s-to-r},
    \eqref{models-converge}, \eqref{ratio-ineq-0}, and part (i) of
    Lemma~\ref{lem-algo} that
    \begin{equation}\label{eqn-yes}
      \lim_{\mu\to 0} -\frac{\Deltait\tilde{q}\big(s_k(l_\beta,\mu)\big)}{s_k(l_\beta,\mu)^T \Grad_x\Lscr(\mu)} = -\frac{\delmodv(\rCk)}{\rCk\T J_k\T c_k} \geq \eps_r > \frac{\eps_k+\eps_r}{2}
    \end{equation}
    and
    \begin{equation} \label{eqn-no}
      \lim_{\mu\to 0} -\frac{\Deltait\tilde{q}\big(s_k(l,\mu)\big)}{s_k(l,\mu)^T \Grad_x\Lscr(\mu)} = -\frac{\delmodv\big(r_k(l)\big)}{r_k(l)\T J_k\T c_k} \leq \eps_k < \frac{\eps_k+\eps_r}{2} \wordss{for all integers $l_k \leq l < l_\beta$.}
    \end{equation}
    It now follows from~\eqref{l-bound}, \eqref{eqn-yes},
    \eqref{eqn-no}, and~\eqref{c2} that $l_{\alpha,\mu} = l_\beta$ for
    all sufficiently small $\mu > 0$.  As previously mentioned, this
    proves~\eqref{cauchys-converge}.
  \end{enumerate}

  Finally, we notice that \eqref{modvals-converge} follows from 
  \eqref{cauchys-converge} and continuity of the model $\qv$.
 \end{proof}
}{
  The next result highlights critical relationships between 
  $\qv$ and $\tilde{q}$ as $\mu \to 0$.

\begin{lemma}[\protect{\cite[Lemma~\ref{lem-models}]{CJJR-arxiv}}]\label{lem-models}
 \lemmodels
 \end{lemma}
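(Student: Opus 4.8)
The plan is to fix $x_k$ and $y_k$ once and for all (they do not change within iteration $k$), suppress them from the function arguments throughout, and treat the four limits in order; the first two are routine algebra, the third is the crux, and the fourth is a corollary.

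For \eqref{grads-converge} I would use the identity $\Grad_x\Lscr(x_k,y_k,\mu) = \mu(g_k - J_k\T y_k) + J_k\T c_k$, so the difference with $J_k\T c_k$ is $\mu(g_k - J_k\T y_k)$, which tends to $0$. Substituting this together with the expansion $\qv(s) = v_k + (J_k\T c_k)\T s + \half\twonorm{J_k s}^2$ into the definition of $\tilde{q}$, and using $\tfrac12 s\T(\mu\Hess_{xx}\ell_k + J_k\T J_k)s = \tfrac{\mu}{2}s\T\Hess_{xx}\ell_k s + \half\twonorm{J_k s}^2$ together with the elementary identity $\max\{a+b,0\} - b = \max\{a,-b\}$, the difference telescopes to
\[
  \tilde{q}(s;\mu) - \qv(s) = \mu\ell_k + \mu\Grad_x\ell_k\T s + \max\Big\{\tfrac{\mu}{2}s\T\Hess_{xx}\ell_k s,\ -\half\twonorm{J_k s}^2\Big\}.
\]
Because $f$ and $c$ are twice continuously differentiable, over the compact ball $\twonorm{s}\leq 2\theta_k$ each term on the right is bounded in absolute value by a constant multiple of $\mu$ (for the $\max$-term this uses that its second entry is nonpositive); taking the supremum over $s$ and letting $\mu\to0$ gives \eqref{models-converge}.

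For \eqref{cauchys-converge} I would split on whether $F\sub{FEAS}(x_k)$ vanishes. If $F\sub{FEAS}(x_k)=0$ then $\theta_k = \delta\twonorm{F\sub{FEAS}(x_k)} = 0$, so Algorithm~\ref{alg-rCk} returns $\rCk = 0$, while \eqref{grads-converge} and continuity of $P$ force $F\sub{AL}(x_k,y_k,\mu)\to F\sub{FEAS}(x_k)=0$ and hence $\TRALk(\mu)=\Gamma_k\delta\twonorm{F\sub{AL}(x_k,y_k,\mu)}\to 0$, so $\sCk(\mu)\to 0 = \rCk$. Otherwise $F\sub{FEAS}(x_k)\neq 0$; here I would introduce the Cauchy-arc points $r_k(l) := P(x_k - \gamma^l J_k\T c_k) - x_k$ and $s_k(l,\mu) := P(x_k - \gamma^l\Grad_x\Lscr(x_k,y_k,\mu)) - x_k$, note that continuity of $P$ and \eqref{grads-converge} give $s_k(l,\mu)\to r_k(l)$ for every fixed $l$, and let $l_\beta$ be the exponent with $\betaC_k = \gamma^{l_\beta}$, so $\rCk = r_k(l_\beta)$. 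Since Algorithm~\ref{alg-sCk} returns $\sCk(\mu) = s_k(l_{\alpha,\mu},\mu)$ for the backtracking exponent $l_{\alpha,\mu}$ it selects, the whole claim reduces to showing $l_{\alpha,\mu} = l_\beta$ for all sufficiently small $\mu$.

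To pin down $l_{\alpha,\mu} = l_\beta$ I would argue in two steps. First, I would show $\min\{l_\beta, l_{\alpha,\mu}\}\geq l_k$ for small $\mu$, where $l_k$ is the integer chosen in Algorithm~\ref{alg-rCk}: the explicit formula for $\Gamma_k$ and \eqref{grads-converge} give $\Theta_k(\mu)\to\Gamma_k\theta_k$, which lies strictly between $\theta_k$ and $\twonorm{P(x_k - \gamma^{l_k-1}J_k\T c_k) - x_k}$, so the first $l_k$ backtracks are rejected by the AL routine as well. Second, for $l_k\leq l < l_\beta$ the maintenance of $\eps_k$ in Algorithm~\ref{alg-rCk} gives $-\delmodv(r_k(l);x_k)/(r_k(l)\T J_k\T c_k)\leq\eps_k < \eps_r$, while the exit condition of its inner loop gives $-\delmodv(\rCk;x_k)/(\rCk\T J_k\T c_k)\geq\eps_r$; passing to the limit with \eqref{models-converge}, \eqref{grads-converge}, and $s_k(l,\mu)\to r_k(l)$ converts these into $-\Deltait\tilde{q}(s_k(l,\mu);x_k,y_k,\mu)/(s_k(l,\mu)\T\Grad_x\Lscr(x_k,y_k,\mu)) < (\eps_k+\eps_r)/2$ for $l_k\leq l < l_\beta$ and $> (\eps_k+\eps_r)/2$ for $l=l_\beta$, which is exactly the acceptance test \eqref{c2}. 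Hence $l_{\alpha,\mu} = l_\beta$ for small $\mu$, which proves \eqref{cauchys-converge}; and \eqref{modvals-converge} then follows at once from continuity of $\qv$. I expect the main obstacle to be precisely this last argument --- running the two interleaved backtracking searches in tandem and verifying that the strict ratio inequalities survive the limit $\mu\to0$ --- including the standard but essential point that every denominator above is negative (which I would justify via a Cauchy-point result such as \cite[Theorem~12.1.4]{ConGT00a}) and checking that the convergence $s_k(l,\mu)\to r_k(l)$ together with \eqref{models-converge} is uniform enough on the relevant ball to preserve strictness.
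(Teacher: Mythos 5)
Your proposal is correct and follows essentially the same route as the paper's proof: the identical algebraic decomposition of $\tilde{q}-\qv$ via the $\max$ identity, the same two-case split on $F\sub{FEAS}(x_k)$, the same auxiliary arc points $r_k(l)$ and $s_k(l,\mu)$, the same two-step argument ($\min\{l_\beta,l_{\alpha,\mu}\}\geq l_k$ via the limit of $\Theta_k(\mu)$, then the ratio inequalities separated strictly by $(\eps_k+\eps_r)/2$), and even the same citation for negativity of the denominators. No substantive differences.
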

}

\newcommand{\lemkeylemmaone}{
  Let $\Omega$ be any scalar value such that
  \begin{equation}\label{the-alpha-and-the-omega}
    \Omega \geq \max\{\twonorm{\mu_k\Hess_{xx}\ell(x_k,y_k) + J_k\T J_k},\twonorm{J_k\T J_k}\}.
  \end{equation}
  Then, the following hold true:
  \begin{itemize}
    \item[(i)] For some $\kappa_4 \in (0,1)$, the Cauchy step for subproblem~\eqref{prob-normal} yields
    \begin{equation}\label{decrease-feas}
      \delmodv(\rCk;x_k) \geq \kappa_4 \twonorm{F\sub{FEAS}(x_k)}^2 \min\left\{\delta,\frac{1}{1+\Omega}\right\}.
    \end{equation}
    \item[(ii)] For some $\kappa_5 \in (0,1)$, the Cauchy step for subproblem~\eqref{subprob-al} yields
    \begin{equation}\label{decrease-AL}
      \Deltait\tilde{q}(\sCk;x_k,y_k,\mu_k) \geq \kappa_5 \twonorm{F\sub{AL}(x_k,y_k,\mu_k)}^2 \min\left\{\delta,\frac{1}{1+\Omega}\right\}.
    \end{equation}
  \end{itemize}
}

We also need the following lemma related to Cauchy decreases in the models $\qv$ and $\tilde{q}$.  
\iftoggle{arxivpaper}{
The conclusions of the lemma are similar to~\cite[Lemma~3.5]{curtis12}, but here we account for the convexified model $\tilde{q}$ and other differences in the subproblems employed here as opposed to those in \cite{curtis12}.
\begin{lemma}\label{keylemma1}
 \lemkeylemmaone
 \end{lemma}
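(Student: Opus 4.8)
The plan is to prove both parts by a single argument, applied to the feasibility subproblem~\eqref{prob-normal} for part~(i) and to the augmented Lagrangian subproblem~\eqref{subprob-al} for part~(ii): this is the direct analogue of~\cite[Lemma~3.5]{curtis12}, and the only genuinely new ingredient is that the convexifying $\max\{\cdot,0\}$ in $\tilde q$ does not disturb the standard Cauchy-decrease estimate.

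First I would reduce each model decrease to the decrease of a plain quadratic whose curvature is bounded by $\Omega$. For the feasibility model, $\delmodv(r;x_k) = \qv(0;x_k) - \qv(r;x_k) = -(J_k\T c_k)\T r - \half r\T(J_k\T J_k)r \geq -(J_k\T c_k)\T r - \half\Omega\twonorm{r}^2$, using $\Omega \geq \twonorm{J_k\T J_k}$ from~\eqref{the-alpha-and-the-omega}. For the AL model, writing $H := \mu_k\Hess_{xx}\ell(x_k,y_k) + J_k\T J_k$ and using $0 \leq \max\{\half s\T H s, 0\} \leq \half\Omega\twonorm{s}^2$ (valid since $\Omega \geq \twonorm{H}$), we get $\Deltait\tilde q(s;x_k,y_k,\mu_k) = -\Grad_x\Lscr(x_k,y_k,\mu_k)\T s - \max\{\half s\T H s,0\} \geq -\Grad_x\Lscr(x_k,y_k,\mu_k)\T s - \half\Omega\twonorm{s}^2$. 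So in both cases the decrease dominates that of a quadratic whose gradient at the origin is $J_k\T c_k = \Grad_x v(x_k)$, respectively $\Grad_x\Lscr(x_k,y_k,\mu_k)$, and whose curvature is at most $\Omega$; the corresponding projected-gradient stationarity measures at $x_k$ are $\twonorm{F\sub{FEAS}(x_k)}$, respectively $\twonorm{F\sub{AL}(x_k,y_k,\mu_k)}$, and $\rCk$ from~\eqref{def-cauchy} and $\sCk$ from~\eqref{def-cauchy2} lie on the respective projected-gradient arcs.

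Next I would invoke the classical estimate for a backtracking projected-gradient (Cauchy) step; see~\cite[Theorem~4.2]{Mor88} and~\cite[Theorem~12.1.4]{ConGT00a}, as already used in the proof of Lemma~\ref{lem-algo}. For a quadratic model with gradient $g$ at the origin, projected-gradient measure $\phi_k := \twonorm{P(x_k-g)-x_k}$, curvature at most $\Omega$, trust-region radius $\Delta_k$, backtracking ratio $\gamma\in(0,1)$, and a sufficient-decrease constant bounded away from $0$ and $1$, the step $P(x_k-\alpha^C g)-x_k$ returned by the backtracking loop yields a decrease at least (a positive constant depending only on $\gamma$ and the sufficient-decrease constant) times $\phi_k\min\{\Delta_k,\,\phi_k/(1+\Omega)\}$. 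For part~(i), $g = J_k\T c_k$, $\phi_k = \twonorm{F\sub{FEAS}(x_k)}$, $\Delta_k = \TRk = \delta\twonorm{F\sub{FEAS}(x_k)}$ by~\eqref{sandy}, and the sufficient-decrease constant in~\eqref{c1} is $\eps_r\in(0,1)$; substituting $\Delta_k = \delta\phi_k$ gives~\eqref{decrease-feas} for a suitable $\kappa_4\in(0,1)$. For part~(ii), $g = \Grad_x\Lscr(x_k,y_k,\mu_k)$, $\phi_k = \twonorm{F\sub{AL}(x_k,y_k,\mu_k)}$, $\Delta_k = \TRALk = \Gamma_k\delta\twonorm{F\sub{AL}(x_k,y_k,\mu_k)}$ by~\eqref{sandy-AL}, and the sufficient-decrease constant in~\eqref{c2} is $(\eps_k+\eps_r)/2$, which lies in $[\eps_r/2,\eps_r)\subset(0,1)$ by Lemma~\ref{lem-algo}(i); since $\Gamma_k\in(1,2]$ gives $\Delta_k \geq \delta\phi_k$, we obtain~\eqref{decrease-AL} for a suitable $\kappa_5\in(0,1)$ (replacing $\Gamma_k\delta$ by the smaller value $\delta$ inside the minimum).

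I expect the one point requiring care---and which I would write out in full---to be the justification that this classical estimate applies to $\tilde q$ despite the convexifying $\max\{\cdot,0\}$. The estimate is driven entirely by the quadratic bound on the model along the projected-gradient arc together with the backtracking loop testing a sufficient-decrease inequality whose constant is bounded away from $0$ and $1$; the inequality $\max\{\half s\T H s,0\} \leq \half\Omega\twonorm{s}^2$ used above supplies precisely the required bound for $\tilde q$ with curvature $\Omega$, and Lemma~\ref{lem-algo}(ii) guarantees that the loop in Algorithm~\ref{alg-sCk} terminates. Hence no modification of the argument in~\cite[Lemma~3.5]{curtis12} is needed beyond replacing the relevant Hessian norm by $\Omega$; the remaining steps (tracking the constant and checking $\kappa_4,\kappa_5\in(0,1)$) are routine.
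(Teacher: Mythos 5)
Your proposal is correct and follows essentially the same route as the paper's proof: bound the (convexified) curvature term of each model by $\Omega$ along the projected-gradient arc, invoke Mor\'e's Cauchy-decrease theorem with the sufficient-decrease constants $\eps_r$ from~\eqref{c1} and $(\eps_k+\eps_r)/2 \geq \eps_r/2$ from~\eqref{c2}, and substitute the trust-region radii \eqref{sandy} and \eqref{sandy-AL} (using $\Gamma_k \in (1,2]$ to replace $\Gamma_k\delta$ by $\delta$). The paper packages the curvature bound as $\Sigma_k \leq 1+\Omega$ via an explicit ratio $\omega_k$, but this is only a cosmetic difference from your surrogate-quadratic phrasing, and your treatment of the $\max\{\cdot,0\}$ term matches the paper's observation that it is nonnegative and bounded by $\half\Omega\twonorm{s}^2$.
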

\begin{proof}
  Let $\Sigma_k := 1 +\sup\{|\omega_k(r)|:0<\twonorm{r}\leq\TRk\}$, where
  \begin{equation*}
    \omega_k(r) = \frac{-\delmodv(r;x_k)-r\T J_k\T c_k}{\twonorm{r}^2} \words{for all} r \in \Re^n.
  \end{equation*}
  In fact, using \eqref{the-alpha-and-the-omega}, the Cauchy-Schwartz inequality, and standard norm inequalities, we have that
  \begin{equation*}
    \omega_k(r) = \frac{r\T J_k\T J_k r}{2\twonorm{r}^2} \leq \Omega \words{for all} r \in \Re^n.
  \end{equation*}
  Hence, $\Sigma_k \leq 1+\Omega$.  The requirement \eqref{c1} and \cite[Theorem 4.4]{Mor88} then yield, for some $\bar\kappa_4 \in (0,1)$, that
  \begin{equation*}
    \delmodv(\rCk;x_k) \geq \epsilon_r \bar\kappa_4 \twonorm{F\sub{FEAS}(x_k)} \min\left\{\TRk,\frac{1}{\Sigma_k}\twonorm{F\sub{FEAS}(x_k)}\right\}.
  \end{equation*}
  which, with \eqref{sandy}, implies that \eqref{decrease-feas} follows with $\kappa_4 := \epsilon_r \bar\kappa_4$.

  We now show~\eqref{decrease-AL} in a similar manner.  Let $\bar{\Sigma}_k := 1 + \sup\{|\bar{\omega}_k(s)|:0<\twonorm{s}\leq\TRALk\}$ where
  \begin{equation*}
    \bar{\omega}_k(s) := \frac{-\Deltait\tilde{q}(s;x_k,y_k,\mu_k)-s\T \Grad_x \Lscr(x_k,y_k,\mu_k)}{\twonorm{s}^2} \words{for all} s \in \Re^n.
  \end{equation*}
  Using \eqref{the-alpha-and-the-omega}, we have in a similar manner as above that
  \begin{align*}
    \bar{\omega}_k(s)
      &=    \frac{\max\{\mu_k s\T\Hess_{xx} \ell(x_k,y_k,\mu_k) s +s\T J_k\T J_k s,0\}}{2\twonorm{s}^2} \\
      &\leq \frac{\twonorm{\mu_k \Hess_{xx} \ell(x_k,y_k,\mu_k)+J_k\T J_k}\twonorm{s}^2}{2\twonorm{s}^2} \leq \Omega.
  \end{align*}
  Thus, $\bar{\Sigma}_k \leq 1+\Omega$.  The requirement \eqref{c2} and \cite[Theorem 4.4]{Mor88} then yield, for some $\bar\kappa_5 \in (0,1)$, that
  \begin{equation*}
    \Deltait\tilde{q}(\sCk;x_k,y_k,\mu_k) \geq \frac{\epsilon_k+\epsilon_r}{2}\bar\kappa_5 \twonorm{F\sub{AL}(x_k,y_k,\mu_k)} \min\left\{\TRALk,\frac{1}{\bar{\Sigma}_k}\twonorm{F\sub{AL}(x_k,y_k,\mu_k)} \right\},
  \end{equation*}
  which, with \eqref{sandy-AL} and Lemma~\ref{lem-algo}(i), implies that \eqref{decrease-AL} follows with $\kappa_5 := \tfrac12 \epsilon_r \bar\kappa_5$.
\end{proof}
}{
\begin{lemma}[\protect{\cite[Lemma~\ref{keylemma1}]{CJJR-arxiv}}]\label{keylemma1}
 \lemkeylemmaone
 \end{lemma}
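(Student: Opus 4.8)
The plan is to obtain both inequalities from the classical lower bound on the reduction in a model attained along the projected-gradient (generalized Cauchy) arc (see \cite[Theorem~4.4]{Mor88}), after first verifying that, for each of the two models, the ratio of the model's second-order term to the squared step length is bounded above by $\Omega$.

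For part~(i), I would first observe that $\qv(\cdot;x_k)$ is the quadratic with gradient $J_k\T c_k$ at the origin and Hessian $J_k\T J_k$, so that, defining
\[
  \omega_k(r) := \frac{-\delmodv(r;x_k) - r\T J_k\T c_k}{\twonorm{r}^2} = \frac{r\T J_k\T J_k r}{2\twonorm{r}^2}
\]
for each nonzero $r$, the Cauchy--Schwarz inequality, standard norm inequalities, and~\eqref{the-alpha-and-the-omega} give $\omega_k(r) \leq \half\twonorm{J_k\T J_k} \leq \Omega$. Hence $\Sigma_k := 1 + \sup\{\,|\omega_k(r)| : 0 < \twonorm{r} \leq \TRk\,\} \leq 1+\Omega$. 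Since $\rCk$ satisfies the fraction-of-Cauchy-decrease requirement~\eqref{c1} with constant $\eps_r \in (0,1)$, \cite[Theorem~4.4]{Mor88} yields a constant $\bar\kappa_4 \in (0,1)$ with
\[
  \delmodv(\rCk;x_k) \geq \eps_r\bar\kappa_4\,\twonorm{F\sub{FEAS}(x_k)}\,\min\!\left\{\TRk,\ \tfrac{1}{\Sigma_k}\twonorm{F\sub{FEAS}(x_k)}\right\}.
\]
Substituting $\TRk = \delta\twonorm{F\sub{FEAS}(x_k)}$ from~\eqref{sandy} and $\Sigma_k \leq 1+\Omega$ then gives~\eqref{decrease-feas} with $\kappa_4 := \eps_r\bar\kappa_4$.

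For part~(ii), I would repeat this reasoning for $\tilde{q}(\cdot;x_k,y_k,\mu_k)$, whose gradient at the origin is $\Grad_x\Lscr(x_k,y_k,\mu_k)$ and whose second-order term is $\max\{\half s\T(\mu_k\Hess_{xx}\ell(x_k,y_k)+J_k\T J_k)s,\,0\}$. Setting
\[
  \bar\omega_k(s) := \frac{-\Deltait\tilde{q}(s;x_k,y_k,\mu_k) - s\T\Grad_x\Lscr(x_k,y_k,\mu_k)}{\twonorm{s}^2} = \frac{\max\{\half s\T(\mu_k\Hess_{xx}\ell(x_k,y_k)+J_k\T J_k)s,\,0\}}{\twonorm{s}^2},
\]
the $\max\{\cdot,0\}$ can only decrease this ratio relative to its unconvexified counterpart, so $\bar\omega_k(s) \leq \half\twonorm{\mu_k\Hess_{xx}\ell(x_k,y_k)+J_k\T J_k} \leq \Omega$ by~\eqref{the-alpha-and-the-omega}, whence $\bar\Sigma_k := 1 + \sup\{\,|\bar\omega_k(s)| : 0 < \twonorm{s} \leq \TRALk\,\} \leq 1+\Omega$. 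Because $\sCk$ satisfies~\eqref{c2} with the strictly positive coefficient $(\eps_k+\eps_r)/2 \geq \eps_r/2$ (recall $\eps_k \in [0,\eps_r)$ and $\Gamma_k \in (1,2]$ by Lemma~\ref{lem-algo}(i)), \cite[Theorem~4.4]{Mor88} again produces a constant $\bar\kappa_5 \in (0,1)$ with
\[
  \Deltait\tilde{q}(\sCk;x_k,y_k,\mu_k) \geq \tfrac{\eps_k+\eps_r}{2}\,\bar\kappa_5\,\twonorm{F\sub{AL}(x_k,y_k,\mu_k)}\,\min\!\left\{\TRALk,\ \tfrac{1}{\bar\Sigma_k}\twonorm{F\sub{AL}(x_k,y_k,\mu_k)}\right\}.
\]
Using $\TRALk = \Gamma_k\delta\twonorm{F\sub{AL}(x_k,y_k,\mu_k)} \geq \delta\twonorm{F\sub{AL}(x_k,y_k,\mu_k)}$ (since $\Gamma_k > 1$), together with $\bar\Sigma_k \leq 1+\Omega$ and $\eps_k \geq 0$, this yields~\eqref{decrease-AL} with $\kappa_5 := \half\eps_r\bar\kappa_5$.

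The only step that requires any care is the treatment of the convexifying $\max\{\cdot,0\}$ in $\tilde{q}$, namely checking that it does not interfere with the hypotheses behind \cite[Theorem~4.4]{Mor88}. This turns out to be harmless: the max is nonnegative and bounded above by $\half\twonorm{\mu_k\Hess_{xx}\ell(x_k,y_k)+J_k\T J_k}\twonorm{s}^2$, so $\tilde{q}$ remains sandwiched between its linearization and a uniformly bounded convex quadratic majorant, and the fraction-of-Cauchy-decrease condition~\eqref{c2} is imposed directly in terms of $\tilde{q}$ itself. Everything else is a routine transcription of the trust-region Cauchy-decrease machinery, with the appearance of $\delta$ in both bounds coming from the scaled trust-region radii~\eqref{sandy} and~\eqref{sandy-AL}.
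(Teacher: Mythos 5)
Your proposal is correct and follows essentially the same route as the paper's proof: both parts bound the curvature ratios $\omega_k$ and $\bar\omega_k$ by $\Omega$ via \eqref{the-alpha-and-the-omega}, invoke \cite[Theorem~4.4]{Mor88} with the fraction-of-Cauchy-decrease conditions \eqref{c1} and \eqref{c2}, and then substitute the trust-region radii \eqref{sandy} and \eqref{sandy-AL} together with $\Gamma_k>1$ and $\eps_k\geq 0$ to arrive at $\kappa_4=\eps_r\bar\kappa_4$ and $\kappa_5=\tfrac12\eps_r\bar\kappa_5$. Your extra remark on why the convexifying $\max\{\cdot,0\}$ does not interfere with the hypotheses of the cited theorem is a welcome clarification but does not change the argument.
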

}

\newcommand{\lemkeylemmatwo}{
   The \textbf{while} loop that begins at line~\ref{while-2} of 
   Algorithm~\ref{alg-aal} terminates finitely.
}

The next lemma shows that the \textbf{while} loop at
line~\ref{while-2}, which is responsible for
ensuring that our adaptive steering conditions in \eqref{conds} are
satisfied, terminates finitely.
\iftoggle{arxivpaper}{
The proof of this is similar to the second part of~\cite[Theorem~3.6]{curtis12}
\begin{lemma}\label{keylemma2}
 \lemkeylemmatwo
 \end{lemma}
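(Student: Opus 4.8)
The plan is to show that the loop on line~\ref{while-2} exits after finitely many reductions of $\mu_k$ by establishing two facts: that $F\sub{AL}(x_k,y_k,\mu)\neq0$ for all sufficiently small $\mu>0$, and that for all sufficiently small $\mu>0$ the Cauchy steps $r_k=\rCk$ and $s_k=\sCk$ — which are admissible outputs of the step computations the loop performs — satisfy all of~\eqref{conds}, in particular the steering condition~\eqref{cond2b}. Since reaching line~\ref{while-2} means line~\ref{while-1} was reached, the first fact is immediate from Lemma~\ref{lem-wd-while-1}. For the second, I would first record the easy inclusions. The step $\rCk$ meets~\eqref{cond2}: $l\leq x_k+\rCk\leq u$ and $\twonorm{\rCk}\leq\TRk$ hold by construction (see~\eqref{c1}), and $\delmodv(\rCk;x_k)\geq\kappa_2\delmodv(\rCk;x_k)\geq0$ since $\kappa_2\in(0,1)$ and, by Lemma~\ref{keylemma1}(i), $\delmodv(\rCk;x_k)\geq0$. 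Likewise $\sCk$ meets~\eqref{cond1}: $l\leq x_k+\sCk\leq u$ and $\twonorm{\sCk}\leq\TRALk$ hold by construction (see~\eqref{c2}), and since $F\sub{AL}(x_k,y_k,\mu)\neq0$, Lemma~\ref{keylemma1}(ii) gives $\Deltait\tilde{q}(\sCk;x_k,y_k,\mu)>0$, whence $\Deltait\tilde{q}(\sCk;x_k,y_k,\mu)\geq\kappa_1\Deltait\tilde{q}(\sCk;x_k,y_k,\mu)>0$ because $\kappa_1\in(0,1)$. (A single value of the scalar $\Omega$ from Lemma~\ref{keylemma1} works throughout the loop, e.g.\ any value dominating $\twonorm{J_k\T J_k}$ and $\mu_k\twonorm{\Hess_{xx}\ell(x_k,y_k)}+\twonorm{J_k\T J_k}$ for the value of $\mu_k$ on entry to the loop.)

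The crux is~\eqref{cond2b}, namely $\delmodv(\sCk;x_k)\geq\min\{\kappa_3\delmodv(\rCk;x_k),\,v_k-\half(\kappa_t t_j)^2\}$, and here I would split on $F\sub{FEAS}(x_k)$. If $F\sub{FEAS}(x_k)=0$, then $\TRk=0$ forces $\rCk=0$ and hence $\delmodv(\rCk;x_k)=0$; moreover $c_k=0$ (otherwise the algorithm would have stopped on line~\ref{term-isp}), so $v_k=0$ and the right-hand side of~\eqref{cond2b} equals $\min\{0,-\half(\kappa_t t_j)^2\}=-\half(\kappa_t t_j)^2<0$, while $\delmodv(\sCk(\mu);x_k)\to\delmodv(\rCk;x_k)=0$ as $\mu\to0$ by~\eqref{modvals-converge}; thus~\eqref{cond2b} holds once $\mu$ is small. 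If instead $F\sub{FEAS}(x_k)\neq0$, then $\delmodv(\rCk;x_k)>0$ by Lemma~\ref{keylemma1}(i), so because $\kappa_3\in(0,1)$ the right-hand side of~\eqref{cond2b} is at most $\kappa_3\delmodv(\rCk;x_k)<\delmodv(\rCk;x_k)$; invoking~\eqref{cauchys-converge}--\eqref{modvals-converge} again, $\delmodv(\sCk(\mu);x_k)\to\delmodv(\rCk;x_k)$, so for all sufficiently small $\mu$ we obtain $\delmodv(\sCk(\mu);x_k)>\kappa_3\delmodv(\rCk;x_k)$, giving~\eqref{cond2b}. Combining this with the first fact, for all sufficiently small $\mu>0$ both termination tests of the loop are met by $(\rCk,\sCk)$; since each pass multiplies $\mu_k$ by $\gamma_\mu\in(0,1)$, after finitely many passes $\mu_k$ falls below the relevant threshold and the loop terminates.

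I expect the main obstacle to be the treatment of~\eqref{cond2b}, since it requires controlling $\delmodv(\sCk;x_k)$ as $\mu\to0$, which is exactly what~\eqref{cauchys-converge}--\eqref{modvals-converge} of Lemma~\ref{lem-models} supply, and it requires the degenerate case $F\sub{FEAS}(x_k)=0$ — where the first argument of the $\min$ vanishes — to be handled by using the termination check on line~\ref{term-isp} to conclude $v_k=0$, so that the second argument is strictly negative. The remaining ingredients — admissibility of the Cauchy steps and positivity of the predicted reductions — follow by bookkeeping from Lemmas~\ref{lem-wd-while-1} and~\ref{keylemma1} and from $\{\kappa_1,\kappa_2,\kappa_3\}\subset(0,1)$.
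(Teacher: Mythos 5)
Your proposal is correct and follows essentially the same route as the paper's proof: invoke Lemma~\ref{lem-wd-while-1} for the $F\sub{AL}\neq0$ part, use \eqref{modvals-converge} to pass $\delmodv(\sCk(\mu);x_k)\to\delmodv(\rCk;x_k)$, and split on whether the steering-step reduction is positive or zero (equivalently, whether $F\sub{FEAS}(x_k)\neq0$), resolving the degenerate case via the line~\ref{term-isp} termination test so that $v_k=0$ makes the right-hand side of \eqref{cond2b} strictly negative. The extra bookkeeping you include for \eqref{cond1}--\eqref{cond2} appears in the paper's proof of Theorem~\ref{thm-wd} rather than in this lemma, but it is harmless here.
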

\begin{proof}
  Since Lemma~\ref{lem-wd-while-1} ensures that the latter condition in
  the \textbf{while} loop is satisfied for all sufficiently small $\mu_k
  > 0$, it suffices to show that $s_k = \sCk$ and $r_k = \rCk$ satisfy
  \eqref{cond2b} for all sufficiently small $\mu_k > 0$.  To see this,
  we may borrow notation from Lemma~\ref{lem-models}---i.e., to consider
  $s_k = \sCk$ as a quantity dependent on a parameter $\mu>0$---and
  observe that \eqref{modvals-converge} implies
  \begin{equation}\label{tired}
    \lim_{\mu\to 0} \delmodv(s_k(\mu);x_k) = \lim_{\mu\to 0} 
       \delmodv(\sCk(\mu);x_k) = \delmodv(\rCk;x_k) = \delmodv(r_k;x_k).
  \end{equation}
  If $\delmodv(r_k;x_k) > 0$, then~\eqref{tired} implies that \eqref{cond2b} 
  is satisfied for sufficiently small $\mu_k>0$.  Otherwise,
  \begin{equation}\label{snore}
    \delmodv(r_k;x_k) = \delmodv(\rCk;x_k)= 0,
  \end{equation}
  which along with~\eqref{decrease-feas} implies that
  $F\sub{FEAS}(x_k)=0$.  We may now consider two cases depending on
  whether $x_k$ is feasible for \eqref{nep}.  If $c_k \neq 0$, then
  Algorithm~\ref{alg-aal} would have terminated in line~\ref{term-isp},
  meaning that the \textbf{while} loop at line~\ref{while-2} would not have
  been reached.  On the other hand, if $c_k = 0$, then  \eqref{snore} implies
  \begin{equation}\label{snore2}
    \min\{\kappa_3\delmodv(r_k;x_k), v_k - \half(\kappa_tt_j)^2\} 
       = -\half(\kappa_tt_j)^2 < 0.
  \end{equation}
  This last strict inequality follows since $t_j > 0$ by construction
  and $\kappa_t \in (0,1)$ by choice.  Therefore, we can deduce
  that~\eqref{cond2b} will be satisfied for sufficiently small $\mu_k >
  0$ by observing~\eqref{tired}, \eqref{snore} and \eqref{snore2}.
\end{proof} 
}{
\begin{lemma}[\protect{\cite[Lemma~\ref{keylemma2}]{CJJR-arxiv}}]\label{keylemma2}
 \lemkeylemmatwo
 \end{lemma}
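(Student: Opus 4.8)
The plan is to show that the \textbf{while} loop on line~\ref{while-2} cannot cycle indefinitely by exhibiting that, once $\mu_k$ is sufficiently small, both conditions controlling the loop are met when $s_k = \sCk$ and $r_k = \rCk$. Since each pass of the loop multiplies $\mu_k$ by the fixed factor $\gamma_\mu \in (0,1)$, it suffices to verify that there is a threshold $\bar\mu > 0$ below which the loop body is no longer executed; the loop then terminates after finitely many iterations.

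First I would dispose of the secondary exit condition: by Lemma~\ref{lem-wd-while-1}, once line~\ref{while-1} has been passed we have $F\sub{AL}(x_k,y_k,\mu) \neq 0$ for all sufficiently small $\mu > 0$, so the clause ``or $F\sub{AL}(x_k,y_k,\mu_k) = 0$'' fails for small $\mu_k$. It remains to show that the steering condition~\eqref{cond2b} holds for $r_k = \rCk$ and $s_k = \sCk$ when $\mu_k$ is small. Borrowing the $\mu$-dependent notation from Lemma~\ref{lem-models}, namely writing $\sCk(\mu)$ for the Cauchy step computed by {\sc Cauchy\_AL}, I would invoke~\eqref{modvals-converge} to conclude that $\delmodv(\sCk(\mu);x_k) \to \delmodv(\rCk;x_k)$ as $\mu \to 0$, so that the left-hand side of~\eqref{cond2b} converges to $\delmodv(r_k;x_k)$.

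The argument then splits on whether $\delmodv(r_k;x_k) = \delmodv(\rCk;x_k)$ is positive or zero. If it is positive, then since $\kappa_3 \in (0,1)$ the target $\min\{\kappa_3 \delmodv(r_k;x_k),\, v_k - \half(\kappa_t t_j)^2\}$ is strictly less than $\delmodv(r_k;x_k)$, and the convergence just established forces~\eqref{cond2b} to hold for all sufficiently small $\mu_k$. If instead $\delmodv(\rCk;x_k) = 0$, then part~(i) of Lemma~\ref{keylemma1} (inequality~\eqref{decrease-feas}) forces $\twonorm{F\sub{FEAS}(x_k)} = 0$. Here I would use the fact that the loop on line~\ref{while-2} is only reached past the termination tests on lines~\ref{term-kkt-m1} and~\ref{term-isp-m1}: if $c_k \neq 0$ then line~\ref{term-isp} would have fired (contradiction), so $c_k = 0$, hence $v_k = 0$ and the target reduces to $-\half(\kappa_t t_j)^2$, which is strictly negative because $t_j > 0$ and $\kappa_t \in (0,1)$. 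Since the left-hand side of~\eqref{cond2b} tends to $0 > -\half(\kappa_t t_j)^2$, condition~\eqref{cond2b} again holds for all sufficiently small $\mu_k$. In both cases the loop exits, so it terminates finitely.

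The main obstacle is the zero-reduction case: one must rule out the pathological situation in which the steering model predicts no feasibility improvement, and the key is recognizing that this can only occur at a stationarity point of $v$ subject to the bounds (via Lemma~\ref{keylemma1}(i)), at which point the termination logic of Algorithm~\ref{alg-aal} guarantees either prior termination (if infeasible) or a strictly negative right-hand side in~\eqref{cond2b} (if feasible). The remainder is a straightforward limiting argument resting on Lemma~\ref{lem-models}.
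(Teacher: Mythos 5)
Your proposal is correct and follows essentially the same route as the paper's proof: dispose of the $F\sub{AL}=0$ clause via Lemma~\ref{lem-wd-while-1}, use the limit~\eqref{modvals-converge} from Lemma~\ref{lem-models}, and split on whether $\delmodv(\rCk;x_k)$ is positive or zero, handling the zero case exactly as the paper does via~\eqref{decrease-feas}, the prior termination test on line~\ref{term-isp}, and the strict negativity of $-\half(\kappa_t t_j)^2$. No gaps.
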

}

\newcommand{\lemisdescent}{
  At line~\ref{line-alpha} of Algorithm~\ref{alg-aal}, the search direction 
  $s_k$ is a strict descent direction for $\Lscr(\cdot,y_k,\mu_k)$ from $x_k$.  
  In particular,
  \begin{equation} \label{eq-descent}
    \Grad_x\Lscr(x_k,y_k,\mu_k)^T s_k \leq 
      -\Deltait\tilde{q}(s_k;x_k,y_k,\mu_{k}) \leq 
        -\kappa_1 \Deltait\tilde{q}(\sCk;x_k,y_k,\mu_{k})<0.
  \end{equation}
}

The final lemma of this section shows that $s_k$ is a strict descent direction
for the AL function.  The conclusion of this lemma is the primary
motivation for our use of the convexified model $\tilde{q}$.
\iftoggle{arxivpaper}{
\begin{lemma}\label{lem-is-descent}
 \lemisdescent
 \end{lemma}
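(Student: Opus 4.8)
The plan is to read the inequality chain \eqref{eq-descent} directly off the structure of the convexified model $\tilde{q}$, combined with the Cauchy-decrease condition \eqref{cond1} that is in force at line~\ref{line-alpha}. Write $H_k := \mu_k \Hess_{xx}\ell(x_k,y_k) + J_k\T J_k$, so that by the definition of $\tilde{q}$ and of $\Deltait\tilde{q}$,
\[
  \Deltait\tilde{q}(s;x_k,y_k,\mu_k) = -\Grad_x\Lscr(x_k,y_k,\mu_k)\T s - \max\{\half s\T H_k s,\,0\}
\]
for every $s \in \Re^n$. The key observation---and precisely the reason the convexified model is used in place of the second-order model of \cite{curtis12}---is that the $\max$ term is nonnegative, so that rearranging yields
\[
  \Grad_x\Lscr(x_k,y_k,\mu_k)\T s \leq -\Deltait\tilde{q}(s;x_k,y_k,\mu_k)
\]
for every $s$, and in particular for $s = s_k$. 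This already gives the first inequality of \eqref{eq-descent}. (Were the $\max$ absent, the quadratic term could be negative and this step would fail.)

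It then remains to establish the strict negativity on the right-hand side. First I would observe that line~\ref{line-alpha} is reached only after the \textbf{while} loop beginning at line~\ref{while-2} has terminated, which it does after finitely many iterations by Lemma~\ref{keylemma2}; hence upon reaching line~\ref{line-alpha} we have both $F\sub{AL}(x_k,y_k,\mu_k) \neq 0$ and a search direction $s_k$ satisfying \eqref{cond1}. The condition \eqref{cond1} states exactly that $\Deltait\tilde{q}(s_k;x_k,y_k,\mu_k) \geq \kappa_1 \Deltait\tilde{q}(\sCk;x_k,y_k,\mu_k) > 0$ (and the strict positivity of $\Deltait\tilde{q}(\sCk;x_k,y_k,\mu_k)$ is in any case guaranteed by part~(ii) of Lemma~\ref{keylemma1}, using any $\Omega$ as in \eqref{the-alpha-and-the-omega}, since $F\sub{AL}(x_k,y_k,\mu_k) \neq 0$ makes the bound \eqref{decrease-AL} strictly positive). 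Consequently $-\Deltait\tilde{q}(s_k;x_k,y_k,\mu_k) \leq -\kappa_1\Deltait\tilde{q}(\sCk;x_k,y_k,\mu_k) < 0$, and chaining this with the inequality from the first paragraph gives the full chain \eqref{eq-descent}; in particular $\Grad_x\Lscr(x_k,y_k,\mu_k)\T s_k < 0$, i.e., $s_k$ is a strict descent direction for $\Lscr(\cdot,y_k,\mu_k)$ from $x_k$.

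There is no genuinely hard step here: the entire content is the sign bookkeeping of the first paragraph, where the nonnegativity of the convexifying $\max$ term is what converts a Cauchy-type decrease guarantee on the \emph{model} into a strict descent property of the \emph{gradient}. The only point requiring care is that condition \eqref{cond1} genuinely holds once line~\ref{line-alpha} is reached, which is why I would explicitly invoke the finite termination of the line~\ref{while-2} loop (Lemma~\ref{keylemma2}) before drawing the conclusion.
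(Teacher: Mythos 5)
Your proposal is correct and follows essentially the same route as the paper's proof: both rearrange the identity $\Deltait\tilde{q}(s_k;x_k,y_k,\mu_k) = -\Grad_x\Lscr(x_k,y_k,\mu_k)\T s_k - \max\{\half s_k\T(\mu_k\Hess_{xx}\ell(x_k,y_k)+J_k\T J_k)s_k,0\}$, use the nonnegativity of the $\max$ term to obtain the first inequality in \eqref{eq-descent}, and then chain with \eqref{cond1} for the strict negativity. Your additional remarks on why \eqref{cond1} is in force at line~\ref{line-alpha} are sound but not part of the paper's (shorter) argument.
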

\begin{proof}
  From the definition of $\tilde{q}$, we find
  \begin{align*}
        \Deltait\tilde{q}(s_k;x_k,y_k,\mu_k)
       &= \tilde{q}(0;x_k,y_k,\mu_k)-\tilde{q}(s_k;x_k,y_k,\mu_k)\\
       &= -\Grad_x\Lscr(x_k,y_k,\mu_k)\T s_k- \max\{\half s_k\T(\mu_k\Hess_{xx}\ell(x_k,y_k) + J_k^TJ_k)s_k,0\}\\
    &\leq -\Grad_x\Lscr(x_k,y_k,\mu_k)\T s_k.
  \end{align*}
  It follows from this inequality and~\eqref{cond1} that
  \begin{equation*}
    \Grad_x\Lscr(x_k,y_k,\mu_k)\T s_k \leq -\Deltait\tilde{q}(s_k;x_k,y_k,\mu_k) \leq -\kappa_1 \Deltait\tilde{q}(\sCk;x_k,y_k,\mu_k) < 0,
  \end{equation*}
  as desired.
\end{proof}
}{
\begin{lemma}[\protect{\cite[Lemma~\ref{lem-is-descent}]{CJJR-arxiv}}]\label{lem-is-descent}
 \lemisdescent
 \end{lemma}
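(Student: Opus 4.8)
The plan is to read the result directly off the definition of the convexified model $\tilde{q}$ together with the Cauchy-decrease requirement~\eqref{cond1}; Assumption~\ref{ass-posed0} is only needed so that $\Grad_x\Lscr(x_k,y_k,\mu_k)$ and $\Hess_{xx}\ell(x_k,y_k)$ are well defined. First I would evaluate $\tilde{q}(\cdot;x_k,y_k,\mu_k)$ at $s=0$ and at $s=s_k$: the linear and quadratic terms vanish at the origin, so $\tilde{q}(0;x_k,y_k,\mu_k)=\Lscr(x_k,y_k)$, while at $s_k$ the value is $\Lscr(x_k,y_k)+\Grad_x\Lscr(x_k,y_k,\mu_k)\T s_k+\max\{\half s_k\T(\mu_k\Hess_{xx}\ell(x_k,y_k)+J_k\T J_k)s_k,\,0\}$. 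Subtracting gives
\begin{equation*}
  \Deltait\tilde{q}(s_k;x_k,y_k,\mu_k) = -\Grad_x\Lscr(x_k,y_k,\mu_k)\T s_k - \max\{\half s_k\T(\mu_k\Hess_{xx}\ell(x_k,y_k)+J_k\T J_k)s_k,\,0\}.
\end{equation*}

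Next, since the $\max$ term is nonnegative by construction --- which is precisely the reason for convexifying the model --- I would drop it to obtain $\Deltait\tilde{q}(s_k;x_k,y_k,\mu_k)\le -\Grad_x\Lscr(x_k,y_k,\mu_k)\T s_k$, equivalently the first inequality in~\eqref{eq-descent}. Then I would invoke~\eqref{cond1}, which by the time line~\ref{line-alpha} of Algorithm~\ref{alg-aal} is reached has been enforced by the search-direction computation together with the \textbf{while} loop at line~\ref{while-2}; this yields $\Deltait\tilde{q}(s_k;x_k,y_k,\mu_k)\ge\kappa_1\Deltait\tilde{q}(\sCk;x_k,y_k,\mu_k)>0$. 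Chaining the two displays produces the full inequality chain of~\eqref{eq-descent}, and the strict negativity of $\Grad_x\Lscr(x_k,y_k,\mu_k)\T s_k$ identifies $s_k$ as a strict descent direction for $\Lscr(\cdot,y_k,\mu_k)$ from $x_k$.

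The only substantive point --- the one I would flag as the crux --- is the strict positivity $\Deltait\tilde{q}(\sCk;x_k,y_k,\mu_k)>0$ that underwrites~\eqref{cond1}. This is not visible from the model definition alone: it rests on Lemma~\ref{keylemma1}(ii), whose bound~\eqref{decrease-AL} is positive exactly when $F\sub{AL}(x_k,y_k,\mu_k)\neq 0$, together with the fact that, by the exit condition of the \textbf{while} loop at line~\ref{while-2} (which terminates finitely by Lemmas~\ref{lem-wd-while-1} and~\ref{keylemma2}), we indeed have $F\sub{AL}(x_k,y_k,\mu_k)\neq 0$ when line~\ref{line-alpha} is reached. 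Everything else is bookkeeping with the definition of $\tilde{q}$ and the nonnegativity of its $\max$ term, so I do not anticipate any real difficulty beyond being careful about which quantities are held fixed during iteration $k$.
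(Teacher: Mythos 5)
Your proposal is correct and follows essentially the same route as the paper's proof: expand $\Deltait\tilde{q}(s_k;x_k,y_k,\mu_k)$ from the definition of $\tilde{q}$, drop the nonnegative $\max$ term to obtain $\Grad_x\Lscr(x_k,y_k,\mu_k)\T s_k \leq -\Deltait\tilde{q}(s_k;x_k,y_k,\mu_k)$, and chain with~\eqref{cond1}. Your additional remark tracing the strict positivity of $\Deltait\tilde{q}(\sCk;x_k,y_k,\mu_k)$ back to Lemma~\ref{keylemma1}(ii) and the termination of the \textbf{while} loop is sound, though the paper simply reads that positivity off the statement of~\eqref{cond1} itself.
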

}


\subsection{Proof of well-posedness result}

\begin{proof}[Proof of Theorem~\ref{thm-wd}]
  If, during the $k$th iteration, Algorithm~\ref{alg-aal} terminates in line~\ref{term-kkt} or~\ref{term-isp}, then there is nothing to prove.  Thus, to proceed in the proof, we may assume that line~\ref{while-1} is reached.  Lemma~\ref{lem-wd-while-1} then ensures that 
  \begin{equation} \label{got-1}
    F\sub{AL}(x_k,y_k,\mu) \neq 0 \words{for all sufficiently small} \mu > 0.
  \end{equation}
  Consequently, the \textbf{while} loop in line~\ref{while-1} will terminate for a sufficiently small $\mu_k > 0$.  Next, by construction, conditions~\eqref{cond1} and \eqref{cond2} are satisfied for any $\mu_k > 0$ by $s_k = \sCk$ and $r_k = \rCk$.  Lemma \ref{keylemma2} then shows that for a sufficiently small $\mu_k > 0$, \eqref{cond2b} is also satisfied by $s_k = \sCk$ and $r_k = \rCk$.  Therefore, line \ref{line-alpha} will be reached.  Finally, Lemma \ref{lem-is-descent} ensures that $\alpha_k$ in line \ref{line-alpha} is well-defined.  This completes the proof as all remaining lines in the $k$th iteration are explicit.
\end{proof}  

\section{Global Convergence} \label{sec-convergence}

We shall tacitly presume that Assumption~\ref{ass-0} holds throughout 
this section, and not state it explicitly.
This assumption and the bound on the multipliers enforced in line~\ref{line-y-bd} of Algorithm~\ref{alg-aal} imply that there exists a positive monotonically increasing sequence $\{\Omega_j\}_{j\geq 1}$ such that for all $k_j \leq k < k_{j+1}$ we have 
\begin{subequations}\label{asses}
  \begin{align}
    \twonorm{\Hess_{xx}\Lscr(\sigma,y_k,\mu_k)}
    &\leq \Omega_j \words{for all $\sigma$ on the segment $[x_k,x_k+s_k]$,} \label{ass-2}\\
    \twonorm{\mu_k\Hess_{xx}\ell(x_k,y_k) + J_k^TJ_k}
    &\leq \Omega_j, \label{ass-3}\\
    \words{and} \twonorm{J_k^TJ_k}
    &\leq \Omega_j. \label{ass-4}
  \end{align}
\end{subequations}
In the subsequent analysis, we make use of the subset of iterations for which line~\ref{line-kj} of Algorithm~\ref{alg-aal} is reached.  For this purpose, we define the iteration index set
\begin{equation}\label{def-y-iter}
  \Yscr := \left\{k_j : \twonorm{c_{k_j}} \leq t_j,\ \min\{\twonorm{F\sub{L}(x_{k_j},\yhat_{k_j})}, \twonorm{F\sub{AL}(x_{k_j},y_{k_j-1},\mu_{k_j-1})}\} \leq T_j\right\}.
\end{equation}

\subsection{Preliminary results}

The following result provides critical bounds on differences in (components of) the augmented Lagrangian summed over sequences of iterations.  We remark that the proof in \cite{curtis12} essentially relies on Assumption~\ref{ass-0} and Dirichlet's Test \cite[\S3.4.10]{DavD10}.

\begin{lemma}[\protect{\cite[Lemma~3.7]{curtis12}}.]\label{lem-sums}
  The following hold true.
  \begin{itemize}
  \item[(i)] If $\mu_k = \mu$ for some $\mu > 0$ and all sufficiently large $k$, then there exist positive constants $M_f$, $M_c$, and $M_\Lscr$ such that for all integers $p \geq 1$ we have 
  \begin{align}
    &\sum_{k=0}^{p-1} \mu_k(f_k-f_{k+1}) < M_f,  \label{f-fin1} \\
    &\sum_{k=0}^{p-1} \mu_ky_k\T (c_{k+1}-c_k) < M_c, \label{c-fin1} \\
    \words{and} &\sum_{k=0}^{p-1} (\Lscr(x_k,y_k,\mu_k) - \Lscr(x_{k+1},y_k,\mu_k)) \label{L-fin1} < M_\Lscr.
  \end{align}
  \item[(ii)] If $\mu_k \to 0$, then the sums
  \begin{align}
    &\sum_{k=0}^\infty \mu_k(f_k-f_{k+1}), \label{f-fin2} \\
    &\sum_{k=0}^\infty \mu_ky_k\T (c_{k+1}-c_k), \label{c-fin2} \\
    \words{and} &\sum_{k=0}^\infty (\Lscr(x_k,y_k,\mu_k) - \Lscr(x_{k+1},y_k,\mu_k)) \label{L-fin2}
  \end{align}
  converge and are finite, and
  \begin{equation}
    \lim_{k\to\infty} \twonorm{c_k} = \cbar \words{for some $\cbar\geq0$.}\label{c-yay}
 \end{equation}
  \end{itemize}
\end{lemma}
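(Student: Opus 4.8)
The plan is to exploit the monotone decrease of the augmented Lagrangian along accepted steps, together with a block-telescoping argument that absorbs the variation of the multiplier estimates $y_k$, and then to invoke Dirichlet's Test in the case $\mu_k \to 0$.  First I would record the basic decrease inequality: the line search criterion~\eqref{dec-L-ls} guarantees, for every $k$, that
\[
  \Lscr(x_k,y_k,\mu_k) - \Lscr(x_{k+1},y_k,\mu_k) \geq \etasuccessful\alpha_k\Deltait\tilde{q}(s_k;x_k,y_k,\mu_k) \geq 0,
\]
so the summands in~\eqref{L-fin1} and~\eqref{L-fin2} are nonnegative.  Expanding $\Lscr$ (with $y_k$ and $\mu_k$ held fixed in both arguments) gives the identity
\[
  \Lscr(x_k,y_k,\mu_k) - \Lscr(x_{k+1},y_k,\mu_k) = \mu_k(f_k-f_{k+1}) + \mu_k y_k\T(c_{k+1}-c_k) + \half\big(\twonorm{c_k}^2-\twonorm{c_{k+1}}^2\big).
\]
Under Assumption~\ref{ass-0}, $f$ and $c$ are bounded on the relevant compact set, so $\{f_k\}$ and $\{\twonorm{c_k}\}$ are bounded, and hence the partial sums of the telescoping series $\sum_k(f_k-f_{k+1})$ and $\sum_k(\twonorm{c_k}^2-\twonorm{c_{k+1}}^2)$ are bounded.

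The crux is to bound the partial sums of $\sum_k y_k\T(c_{k+1}-c_k)$.  The estimate $y_k$ changes only at the iterates $k_j$ (line~\ref{line-y-bd}), so on each interval strictly between two consecutive updates $y_k$ is constant, and telescoping over these intervals collapses the sum (up to one boundary term) to $\sum_j y_{k_j}\T(c_{k_{j+1}}-c_{k_j})$.  Here I would use $\twonorm{c_{k_j}}\leq t_j$ (the test on line~\ref{line-c-small}), the cap $\twonorm{y_{k_j}}\leq Y_{j+1}=\max\{Y,t_{j-1}^{-\epsilon}\}$ enforced by~\eqref{new-y-2}, and the target rule $t_{j+1}\leq\min\{\gamma_t t_j,t_j^{1+\epsilon}\}$ with $\{\gamma_t,\epsilon\}\subset(0,1)$, which together give $|y_{k_j}\T(c_{k_{j+1}}-c_{k_j})|\leq 2\twonorm{y_{k_j}}\,t_j\leq 2\max\{Y\gamma_t,1\}\,t_{j-1}$; since $t_{j-1}\leq\gamma_t^{j-1}t_0\to 0$ geometrically, this series converges absolutely, so $\sum_k y_k\T(c_{k+1}-c_k)$ has bounded partial sums.  (If $y_k$ is updated only finitely often, the tail is constant and boundedness is immediate from boundedness of $c$.)

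With these ingredients the two cases follow quickly.  For part~(i), where $\mu_k\equiv\mu$ for all large $k$, the finitely many earlier terms contribute only a constant, the sums $\mu\sum(f_k-f_{k+1})$ and $\half\sum(\twonorm{c_k}^2-\twonorm{c_{k+1}}^2)$ telescope to bounded quantities, and $\mu\sum y_k\T(c_{k+1}-c_k)$ is bounded by the previous paragraph; the identity above then yields~\eqref{f-fin1},~\eqref{c-fin1}, and~\eqref{L-fin1}.  For part~(ii), where $\{\mu_k\}$ is monotonically non-increasing with $\mu_k\to0$: $\sum_k\mu_k(f_k-f_{k+1})$ converges by Dirichlet's Test (the monotone null factor $\mu_k$ against the bounded partial sums of $f_k-f_{k+1}$), and likewise $\sum_k\mu_k y_k\T(c_{k+1}-c_k)$ converges by Dirichlet's Test via the partial-sum bound just established; this gives~\eqref{f-fin2} and~\eqref{c-fin2}.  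Consequently, the partial sums of the nonnegative series $\sum_k\big(\Lscr(x_k,y_k,\mu_k)-\Lscr(x_{k+1},y_k,\mu_k)\big)$ equal these two convergent series plus $\half(\twonorm{c_0}^2-\twonorm{c_p}^2)$, hence are bounded above, so the series converges, which is~\eqref{L-fin2}; and since this series together with its first two constituents all converge, the telescoping remainder $\half(\twonorm{c_0}^2-\twonorm{c_p}^2)$ must converge as $p\to\infty$, i.e., $\twonorm{c_k}^2$ converges, giving~\eqref{c-yay}.

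The main obstacle is the treatment of $\sum_k y_k\T(c_{k+1}-c_k)$: because the multiplier cap $Y_j$ may itself diverge, one cannot simply invoke uniform boundedness of $\{y_k\}$, but must instead exploit the quantitative interplay between the feasibility targets $t_j$, the cap $Y_{j+1}=\max\{Y,t_{j-1}^{-\epsilon}\}$, and the geometric decay of $t_j$.  Everything else reduces to telescoping, boundedness on the compact set supplied by Assumption~\ref{ass-0}, and a routine application of Dirichlet's Test.
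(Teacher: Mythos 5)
Your proposal is correct and follows essentially the same route as the paper's proof: telescoping the $f$ and $\twonorm{c}^2$ terms under Assumption~\ref{ass-0}, collapsing $\sum_k y_k\T(c_{k+1}-c_k)$ over the blocks on which $y_k$ is constant and controlling $\twonorm{y_{k_j}}\twonorm{c_{k_{j+1}}-c_{k_j}}\leq 2Y_{j+1}t_j\leq 2t_{j-1}$ via the interplay of the cap $Y_{j+1}$ with the target rule for $t_j$, and then applying Dirichlet's Test when $\mu_k\to0$. The only cosmetic difference is that you use the inequalities $t_{j+1}\leq t_j^{1+\epsilon}$ and $Y_{j+1}\leq\max\{Y,t_{j-1}^{-\epsilon}\}$ uniformly in $j$, whereas the paper first passes to $j\geq j'$ where these hold with equality; both yield the same geometric bound.
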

\iftoggle{fullpaper}
{
\begin{proof}
  Under Assumption~\ref{ass-0} we may conclude that for some constant $\overline{M}_f>0$ and all integers $p\geq 1$ we have
  \begin{equation*}
    \sum_{k= 0}^{p-1}(f_k-f_{k+1}) = f_0-f_p < \overline{M}_f.
  \end{equation*}
  If $\mu_k = \mu$ for all sufficiently large $k$, then this implies
  that \eqref{f-fin1} clearly holds for some sufficiently large $M_f$.
  Otherwise, if $\mu_k \to 0$, then it follows from Dirichlet's
  Test~\cite[\S3.4.10]{DavD10} and the fact that $\{\mu_k\}_{k\geq 0}$
  is a monotonically decreasing sequence that converges to zero
  that~\eqref{f-fin2} converges and is finite. 

  Next, we show that for some constant $\overline{M}_c>0$ and all integers $p \geq 1$ we have
  \begin{equation} \label{c-mbar}
    \sum_{k=0}^{p-1} y_k\T (c_{k+1}-c_k) < \overline{M}_c.
  \end{equation}
  First, suppose that $\Yscr$ defined in~\eqref{def-y-iter} is finite.  It follows that there exists $k' \geq 0$ and $y$ such that $y_k = y$ for all $k \geq k'$.  Moreover, under Assumption~\ref{ass-0} there exists a constant $\Mhat_c>0$ such that for all $p \geq k'+1$ we have
  \begin{equation*}
    \sum_{k=k'}^{p-1} y_k\T (c_{k+1}-c_k) = y^T \sum_{k=k'}^{p-1} (c_{k+1}-c_k) = y^T (c_p-c_{k'}) \leq \twonorm{y}\twonorm{c_p-c_{k'}} < \Mhat_c.
  \end{equation*}
  It is now clear that~\eqref{c-mbar} holds in this case.  Second,
  suppose that $|\Yscr|=\infty$ so that the sequence $\{k_j\}_{j\geq 1}$
  in Algorithm~\ref{alg-aal} is infinite.  By construction $t_j \to 0$, so for some $j' \geq 1$ we have
  \begin{equation} \label{t-event}
    t_j = t_{j-1}^{1+\epsilon} \words{and} Y_{j+1} = t_{j-1}^{-\epsilon} \words{for all $j \geq j'$.}
  \end{equation}
  From the definition of the sequence $\{k_j\}_{j\geq 1}$,
  \eqref{y-combo}, and \eqref{new-y-2}, we know that
  \begin{align*}
    \sum_{k=k_j}^{k_{j+1}-1} y_k\T (c_{k+1}-c_k)
      &= y_{k_j}\T \sum_{k=k_j}^{k_{j+1}-1} (c_{k+1}-c_k) = y_{k_j}\T (c_{k_{j+1}} - c_{k_j}) \\ &\leq \twonorm{y_{k_j}} \twonorm{c_{k_{j+1}} - c_{k_j}} \leq 2Y_{j+1}t_j = 2t_{j-1} \words{for all $j\geq j'$,}
  \end{align*}
  where the last equality follows from~\eqref{t-event}.  Using these
  relationships, summing over all $j' \leq j \leq j'+q$ for an
  arbitrary integer $q \geq 1$, and using the fact that $t_{j+1} \leq
  \gamma_t t_j$ by construction, leads to 
  \begin{align*}
    \sum_{j=j'}^{j'+q}\left(\sum_{k= k_j}^{k_{j+1}-1} y_k\T (c_{k+1}-c_k)\right)
      &\leq 2\sum_{j=j'}^{j'+q} t_{j-1} \nonumber \\
      &\leq 2t_{j'-1}\sum_{l=0}^{q} \gamma_t^l = 2t_{j'-1}\frac{1-\gamma_t^{q+1}}{1-\gamma_t} \leq \frac{2t_{j'-1}}{1-\gamma_t}.
  \end{align*}
  It is now clear that~\eqref{c-mbar} holds in this case as well.

  We have shown that~\eqref{c-mbar} always holds.  Thus, if $\mu_k = \mu$ for all sufficiently large $k$, then \eqref{c-fin1} holds for some sufficiently large $M_c$.  Otherwise, if $\mu_k \to 0$, then it follows from Dirichlet's Test~\cite[\S3.4.10]{DavD10}, \eqref{c-mbar} and the fact that $\{\mu_k\}_{k\geq 0}$ is a monotonically decreasing sequence that converges to zero that~\eqref{c-fin2} converges and is finite.

  Finally, observe that
  \begin{align}
     & \sum_{k=0}^{p-1} \Lscr(x_k,y_k,\mu_k) - \Lscr(x_{k+1},y_k,\mu_k)\nonumber \\
    =& \sum_{k=0}^{p-1} \mu_k(f_k-f_{k+1}) + \sum_{k=0}^{p-1} \mu_k y_k\T(c_{k+1}-c_k) + \half\sum_{k=0}^{p-1}(\twonorm{c_k}^2-\twonorm{c_{k+1}}^2)\nonumber \\
    =& \sum_{k=0}^{p-1} \mu_k(f_k-f_{k+1}) +\sum_{k=0}^{p-1} \mu_k y_k\T(c_{k+1}-c_k) +\half( \twonorm{c_0}^2-\twonorm{c_p}^2).\label{l-done}
  \end{align}
  If $\mu_k = \mu$ for all sufficiently large $k$, then it follows from~Assumption~\ref{ass-0}, \eqref{f-fin1}, \eqref{c-fin1}, and~\eqref{l-done} that~\eqref{L-fin1} will hold for some sufficiently large $M_\Lscr$.  Otherwise, consider when $\mu_k \to 0$.  Taking the limit of~\eqref{l-done} as $p \to \infty$, we have from Assumption~\ref{ass-0} and conditions \eqref{f-fin2} and~\eqref{c-fin2} that
  \begin{equation*}
    \sum_{k=0}^\infty (\Lscr(x_k,y_k,\mu_k) - \Lscr(x_{k+1},y_k,\mu_k))<\infty.
  \end{equation*}
  Since the terms in this sum are all nonnegative, it follows from the Monotone Convergence Theorem that~\eqref{L-fin2} converges and is finite.  Moreover, we may again take the limit of~\eqref{l-done} as $p \to \infty$ and use~\eqref{f-fin2}, \eqref{c-fin2}, and~\eqref{L-fin2} to conclude that~\eqref{c-yay} holds.\qed
\end{proof}
}



We also need the following lemma that bounds the step-size sequence $\{\alpha_k\}$ below.

\begin{lemma} \label{lem-alphabound}
  There exists a positive monotonically decreasing sequence $\{C_j\}_{j \geq 1}$ such that, with the sequence $\{k_j\}$ computed in Algorithm~\ref{alg-aal}, the step-size sequence $\{\alpha_k\}$ satisfies
  \begin{equation*}
    \alpha_k \geq C_j > 0 \words{for all} k_j \leq k < k_{j+1}.
  \end{equation*}
\end{lemma}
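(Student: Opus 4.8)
The plan is to carry out the standard backtracking line-search analysis while tracking that all constants are uniform over each window $k_j\le k<k_{j+1}$, on which the Hessian bound \eqref{ass-2} holds with the single constant $\Omega_j$. Fix such a $k$. Since the \textbf{while} loop at line~\ref{while-2} has exited, $F\sub{AL}(x_k,y_k,\mu_k)\neq 0$, so $\TRALk>0$ by \eqref{sandy-AL}; and $\Deltait\tilde{q}(s_k;x_k,y_k,\mu_k)\ge\kappa_1\Deltait\tilde{q}(\sCk;x_k,y_k,\mu_k)>0$ by \eqref{cond1}, whence in particular $s_k\neq 0$. A second-order Taylor expansion of $\Lscr(\cdot,y_k,\mu_k)$ along $s_k$ together with \eqref{ass-2} gives, for every $\alpha\in[0,1]$,
\[
  \Lscr(x_k+\alpha s_k,y_k,\mu_k)\le \Lscr(x_k,y_k,\mu_k)+\alpha\,\Grad_x\Lscr(x_k,y_k,\mu_k)^Ts_k+\tfrac{1}{2}\alpha^2\Omega_j\twonorm{s_k}^2.
\]
Substituting the descent bound $\Grad_x\Lscr(x_k,y_k,\mu_k)^Ts_k\le-\Deltait\tilde{q}(s_k;x_k,y_k,\mu_k)$ from Lemma~\ref{lem-is-descent} and rearranging, one sees that the Armijo test \eqref{dec-L-ls} is satisfied at a given $\alpha\in(0,1]$ as soon as $\alpha\le 2(1-\etasuccessful)\Deltait\tilde{q}(s_k;x_k,y_k,\mu_k)/(\Omega_j\twonorm{s_k}^2)$, which is a well-defined positive bound since $\Omega_j>0$ and $s_k\neq 0$.

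Next I would bound the ratio $\Deltait\tilde{q}(s_k;x_k,y_k,\mu_k)/\twonorm{s_k}^2$ from below by a quantity depending on $j$ only. On the one hand, by \eqref{cond1} and part~(ii) of Lemma~\ref{keylemma1} used with $\Omega=\Omega_j$ (admissible by \eqref{ass-3}--\eqref{ass-4}),
\[
  \Deltait\tilde{q}(s_k;x_k,y_k,\mu_k)\ge\kappa_1\kappa_5\,\twonorm{F\sub{AL}(x_k,y_k,\mu_k)}^2\min\Bigl\{\delta,\tfrac{1}{1+\Omega_j}\Bigr\};
\]
on the other hand, $\twonorm{s_k}\le\TRALk\le 2\delta\,\twonorm{F\sub{AL}(x_k,y_k,\mu_k)}$ by \eqref{sandy-AL} and $\Gamma_k\le 2$ (Lemma~\ref{lem-algo}(i)). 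Dividing, the $\twonorm{F\sub{AL}}^2$ factors cancel and
\[
  \frac{\Deltait\tilde{q}(s_k;x_k,y_k,\mu_k)}{\twonorm{s_k}^2}\ge\frac{\kappa_1\kappa_5}{4\delta^2}\min\Bigl\{\delta,\tfrac{1}{1+\Omega_j}\Bigr\}=:\rho_j>0.
\]
Hence \eqref{dec-L-ls} holds for every $\alpha\in(0,\min\{1,2(1-\etasuccessful)\rho_j/\Omega_j\}]$, so the backtracking rule selecting the smallest $l\ge 0$ for which $\gamma_\alpha^l$ is admissible returns $\alpha_k\ge\gamma_\alpha\min\{1,2(1-\etasuccessful)\rho_j/\Omega_j\}=:C_j>0$: either $l=0$ and $\alpha_k=1\ge C_j$, or $\gamma_\alpha^{l-1}$ fails \eqref{dec-L-ls}, forcing $\gamma_\alpha^{l-1}>\min\{1,2(1-\etasuccessful)\rho_j/\Omega_j\}$ and thus $\alpha_k=\gamma_\alpha\gamma_\alpha^{l-1}\ge C_j$.

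Finally I would verify monotonicity: since $\{\Omega_j\}$ is positive and monotonically increasing, $\min\{\delta,1/(1+\Omega_j)\}$ is nonincreasing in $j$, hence $\rho_j$ is nonincreasing, hence $2(1-\etasuccessful)\rho_j/\Omega_j$ is nonincreasing, and therefore so is $C_j=\gamma_\alpha\min\{1,2(1-\etasuccessful)\rho_j/\Omega_j\}$; positivity of every factor gives $C_j>0$, as claimed. I expect no conceptual difficulty in this argument; the main obstacle is purely the bookkeeping needed to ensure that each ingredient—the Hessian bound in the Taylor step, the admissible choice of $\Omega$ in Lemma~\ref{keylemma1}, and the trust-region multiplier $\Gamma_k$—is controlled over the entire window $k_j\le k<k_{j+1}$ by quantities depending on $j$ alone, so that the resulting $C_j$ is genuinely independent of $k$ and inherits the monotonicity of $\{\Omega_j\}$.
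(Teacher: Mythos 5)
Your proposal is correct and follows essentially the same route as the paper's proof: a Taylor/Armijo argument showing any rejected step-size exceeds a threshold proportional to $\Deltait\tilde{q}(s_k;x_k,y_k,\mu_k)/\twonorm{s_k}^2$, combined with \eqref{cond1}, Lemma~\ref{keylemma1}(ii), \eqref{sandy-AL}, and $\Gamma_k\le 2$ to make that threshold depend only on $j$. The only (harmless) differences are that you instantiate the Taylor remainder constant explicitly as $\Omega_j$ via \eqref{ass-2} where the paper uses a generic $\tau$, and you explicitly cap the bound with $\min\{1,\cdot\}$ to cover the case $\alpha_k=1$.
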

\begin{proof}
  By Taylor's Theorem and Lemma~\ref{lem-is-descent}, it follows under Assumption~\ref{ass-0} that there exists $\tau>0$ such that for all sufficiently small $\alpha>0$ we have
  \begin{equation}
    \Lscr(x_k+\alpha s_k,y_k,\mu_{k}) - \Lscr(x_k,y_k,\mu_{k}) \leq -\alpha\Deltait\tilde{q}(s_k;x_k,y_k,\mu_{k}) + \tau\alpha^2\|s_k\|^2. \label{eq.ls1}
  \end{equation}
  On the other hand, during the line search implicit in line~\ref{line-alpha} of Algorithm~\ref{alg-aal}, a step-size $\alpha$ is rejected if
  \begin{equation}\label{eq.ls2}
    \Lscr(x_k+\alpha s_k,y_k,\mu_{k}) - \Lscr(x_k,y_k,\mu_{k}) > -\eta_s\alpha\Deltait\tilde{q}(s_k;x_k,y_k,\mu_{k}).
  \end{equation}
  Combining \eqref{eq.ls1}, \eqref{eq.ls2}, and~\eqref{cond1}  we have that a rejected step-size $\alpha$ satisfies
  \begin{equation*}
    \alpha > \frac{(1-\eta_s)\Deltait\tilde{q}(s_k;x_k,y_k,\mu_{k})}{\tau\twonorm{s_k}^2} \geq \frac{(1-\eta_s)\Deltait\tilde{q}(s_k;x_k,y_k,\mu_{k})}{\tau\TRALk^2}.
  \end{equation*}
  From this bound, the fact that if the line search rejects a step-size it multiplies it by $\gamma_{\alpha} \in (0,1)$, \eqref{cond1}, 
\eqref{decrease-AL}, \eqref{ass-3}, \eqref{sandy-AL}, and $\Gamma_k \in (1,2]$ (see Lemma~\ref{lem-algo}) it follows that, for all $k \in [k_j,k_{j+1})$,
\begin{align*}
  \alpha_k
    &\geq \frac{\gamma_\alpha(1-\eta_s)\Deltait\tilde{q}(s_k;x_k,y_k,\mu_k)}{\tau\TRALk^2} \\
    &\geq \frac{\gamma_\alpha(1-\eta_s)\kappa_1\kappa_5\twonorm{F\sub{AL}(x_k,y_k,\mu_k)}^2} {\tau \Gamma_k^2 \delta^2\twonorm{F\sub{AL}(x_k,y_k,\mu_k)}^2}\min\left\{\delta,\frac{1}{1 + \Omega_j}\right\} \\
    &\geq \frac{\gamma_\alpha(1-\eta_s)\kappa_1\kappa_5}{4\tau \delta^2} \min\left\{\delta,\frac{1}{1 + \Omega_j}\right\} =: C_j > 0,
\end{align*}
  as desired.
\end{proof}

We break the remainder of the analysis into two cases depending on whether there are a finite or an infinite number of modifications of the Lagrange multiplier estimate.

\subsection{A finite number of multiplier updates}

In this section, we suppose that the set $\Yscr$ in \eqref{def-y-iter} is finite in that the counter $j$ in Algorithm~\ref{alg-aal} satisfies
\begin{equation} \label{j-finite}
 j\in\{1,2, \dots \jbar\} \words{for some finite $\jbar$.}
\end{equation}
This allows us to define, and consequently use in our analysis, the quantities
\begin{equation} \label{cons-j-finite}
  t := t_\jbar > 0 \words{and} T := T_\jbar > 0.
\end{equation}

We provide two lemmas in this subsection.  The first considers cases when the penalty parameter converges to zero, and the second considers cases when the penalty parameter remains bounded away from zero.  This first case---in which the multiplier estimate is only modified a finite number of times and the penalty parameter vanishes---may be expected to occur when \eqref{nep} is infeasible.  Indeed, in this case, we show that every limit point of the primal iterate sequence is an infeasible stationary point.

\newcommand{\lemtwentysix}{
  If $|\Yscr| < \infty$ and $\mu_k \to 0$, then there exist a vector $y$ and integer $\kbar \geq 0$ such that 
  \begin{equation}\label{zero-1}
    y_k = y \words{for all} k \geq \kbar,
  \end{equation}
  and for some constant $\cbar > 0$, we have the limits
  \begin{equation}\label{zero-2}
    \lim_{k\to\infty} \twonorm{c_k} = \cbar > 0 \words{and} \lim_{k\to\infty} F\sub{FEAS}(x_k) = 0.
  \end{equation}
  Therefore, every limit point of $\{x_k\}_{k\geq 0}$ is an infeasible stationary point.
}

\iftoggle{arxivpaper}{
\begin{lemma}\label{lem-26}
 \lemtwentysix
 \end{lemma}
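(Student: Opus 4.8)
The plan has three stages: settle the multiplier behaviour, use the line search to force $F\sub{AL}(x_k,y_k,\mu_k)\to 0$ (and hence $F\sub{FEAS}(x_k)\to 0$), and then argue by contradiction that the constraint violation cannot vanish. First I would observe that $|\Yscr|<\infty$ makes the counter $j$ in Algorithm~\ref{alg-aal} eventually constant, say $j=\jbar$ for all iterations past some index, because $j$ only changes when a new $k_j$ is recorded in line~\ref{line-kj}; since $y_k$ is modified only on line~\ref{line-y-bd}, which is reached only when $j$ is incremented, there are $\kbar\geq 0$ and a vector $y$ with $y_k=y$ for all $k\geq\kbar$, which is~\eqref{zero-1}. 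For $k\geq\kbar$ we then also have $t_j=t_\jbar$, $\mu_{k+1}=\mu_k$, the uniform bounds $\Omega_j=\Omega_\jbar$ in~\eqref{ass-3}--\eqref{ass-4}, and, by Lemma~\ref{lem-alphabound}, $\alpha_k\geq C_\jbar>0$.

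Next, since $\mu_k\to 0$, Lemma~\ref{lem-sums}(ii) gives that $\sum_{k\geq 0}\big(\Lscr(x_k,y_k,\mu_k)-\Lscr(x_{k+1},y_k,\mu_k)\big)$ converges and that $\twonorm{c_k}\to\cbar$ for some $\cbar\geq 0$ (this is~\eqref{c-yay}). Using the line search condition~\eqref{dec-L-ls} with $x_{k+1}=x_k+\alpha_k s_k$, each summand is bounded below by $\etasuccessful\alpha_k\Deltait\tilde{q}(s_k;x_k,y_k,\mu_k)\geq 0$, so $\alpha_k\Deltait\tilde{q}(s_k;x_k,y_k,\mu_k)\to 0$; since $\alpha_k\geq C_\jbar>0$ for $k\geq\kbar$, this yields $\Deltait\tilde{q}(s_k;x_k,y_k,\mu_k)\to 0$, hence $\Deltait\tilde{q}(\sCk;x_k,y_k,\mu_k)\to 0$ by~\eqref{cond1}, and then part~(ii) of Lemma~\ref{keylemma1}, applied with the single value $\Omega=\Omega_\jbar$, forces $F\sub{AL}(x_k,y_k,\mu_k)\to 0$. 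Finally, from $\Grad_x\Lscr(x_k,y_k,\mu_k)=\mu_k(g_k-J_k\T y_k)+J_k\T c_k$, the boundedness of $\{g_k\}$ and $\{J_k\}$ under Assumption~\ref{ass-0}, and $y_k=y$ for $k\geq\kbar$, we get $\Grad_x\Lscr(x_k,y_k,\mu_k)-J_k\T c_k\to 0$; the nonexpansiveness of the projection $P$ then gives $\twonorm{F\sub{AL}(x_k,y_k,\mu_k)-F\sub{FEAS}(x_k)}\to 0$, so $F\sub{FEAS}(x_k)\to 0$, which is the second limit in~\eqref{zero-2}.

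It remains to show $\cbar>0$ and to draw the conclusion about limit points. Suppose $\cbar=0$; then $\twonorm{c_{k+1}}\leq t_\jbar$ for all large $k$, so the test on line~\ref{line-c-small} passes, and since $|\Yscr|<\infty$ no further update occurs, which means the test on line~\ref{crunchy} must fail for all large $k$, i.e. $\twonorm{F\sub{AL}(x_{k+1},y_k,\mu_k)}>T_\jbar>0$. But for $k\geq\kbar$ we have $y_{k+1}=y_k$ and $\mu_{k+1}=\mu_k$, so $F\sub{AL}(x_{k+1},y_k,\mu_k)=F\sub{AL}(x_{k+1},y_{k+1},\mu_{k+1})\to 0$ by the previous stage, a contradiction; hence $\cbar>0$, completing~\eqref{zero-2}. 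For the last assertion, every $x_k$ satisfies $l\leq x_k\leq u$, so any limit point $\xstar$ of $\{x_k\}$ lies in $[l,u]$; by continuity of $c$ and of $F\sub{FEAS}$ together with~\eqref{c-yay} and $F\sub{FEAS}(x_k)\to 0$ we obtain $\half\twonorm{c(\xstar)}^2=\half\cbar^2>0$ and $F\sub{FEAS}(\xstar)=0$, i.e. $\xstar$ is an infeasible stationary point for~\eqref{nep}.

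The step I expect to be the main obstacle is the index bookkeeping in the $\cbar=0$ contradiction: one must carefully match the iterations at which lines~\ref{line-c-small} and~\ref{crunchy} are evaluated against the quantity $F\sub{AL}(x_{k+1},y_k,\mu_k)$, and invoke $y_{k+1}=y_k$, $\mu_{k+1}=\mu_k$ for $k\geq\kbar$ to connect it to the limit $F\sub{AL}(x_k,y_k,\mu_k)\to 0$ established earlier. A secondary point requiring care is securing a \emph{uniform} constant in Lemma~\ref{keylemma1}(ii) along the whole tail of the iteration; this is handled by applying the lemma with $\Omega=\Omega_\jbar$, which is admissible for all large $k$ precisely because $|\Yscr|<\infty$ makes $j$ eventually constant.
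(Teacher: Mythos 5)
Your overall strategy coincides with the paper's: you obtain \eqref{zero-1} from the finiteness of $\Yscr$, combine Lemma~\ref{lem-sums}(ii) with the Armijo condition \eqref{dec-L-ls}, the step-size lower bound of Lemma~\ref{lem-alphabound} and the Cauchy decrease bound \eqref{decrease-AL} to force $F\sub{AL}(x_k,y_k,\mu_k)\to 0$, transfer this to $F\sub{FEAS}$ using $\mu_k\to 0$ and the nonexpansiveness of $P$, and rule out $\cbar=0$ by showing that line~\ref{crunchy} would otherwise fire and increment $j$. Your direct ``terms of a convergent nonnegative series tend to zero'' phrasing is a clean equivalent of the paper's contradiction with a subsequence on which $\twonorm{F\sub{AL}(x_k,y_k,\mu_k)}$ is bounded away from zero.

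One step, however, is wrong as written. In the $\cbar=0$ contradiction you assert that $\mu_{k+1}=\mu_k$ for all $k\geq\kbar$ and use this to identify $F\sub{AL}(x_{k+1},y_k,\mu_k)$ with $F\sub{AL}(x_{k+1},y_{k+1},\mu_{k+1})$. This is incompatible with the standing hypothesis of the lemma: since $\{\mu_k\}$ is positive and monotonically nonincreasing, $\mu_k\to 0$ forces $\mu_{k+1}<\mu_k$ infinitely often, so the identification fails in exactly the regime being analyzed (the quantity tested on line~\ref{crunchy} uses the value of $\mu_k$ fixed at the end of iteration $k$, whereas the limit you established concerns the possibly smaller value used in iteration $k+1$). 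The conclusion you need is nevertheless true and is obtained directly, without appeal to the line-search stage, as in the paper: if $\cbar=0$ then
\begin{equation*}
  \twonorm{F\sub{AL}(x_{k+1},y_k,\mu_k)} \leq \twonorm{\Grad_x\Lscr(x_{k+1},y_k,\mu_k)} = \twonorm{\mu_k\big(g_{k+1}-J_{k+1}\T y\big)+J_{k+1}\T c_{k+1}} \to 0
\end{equation*}
by $\mu_k\to 0$, \eqref{zero-1}, the boundedness of $g$ and $J$ under Assumption~\ref{ass-0}, and $c_{k+1}\to 0$; combined with $\twonorm{c_{k+1}}\leq t_\jbar$ for all large $k$, both line~\ref{line-c-small} and line~\ref{crunchy} would eventually pass and $j$ would be set to $\jbar+1$, contradicting $|\Yscr|<\infty$. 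With this substitution your argument is complete and matches the paper's proof.
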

\begin{proof}
  It follows from \eqref{j-finite}, \eqref{cons-j-finite}, and the
  manner in which the multiplier estimates are updated in
  Algorithm~\ref{alg-aal} that there exists $y$ and a scalar $\kbar \geq
  k_\jbar$ such that~\eqref{zero-1} holds.  Thus, all that remains is to
  prove that \eqref{zero-2} holds for some $\cbar > 0$.

  From~\eqref{c-yay} and the supposition that $\mu_k \to 0$, it follows
  that $\twonorm{c_k} \to \cbar$ for some $\cbar \geq 0$.  If $\cbar =
  0$, then by Assumption~\ref{ass-0}, \eqref{zero-1}, and the fact that
  $\mu_k \to 0$ it follows that $\lim_{k\to \infty}
  \Grad_x\Lscr(x_k,y,\mu_k) = \lim_{k\to\infty} J_k\T c_k = 0$, which
  implies that $\lim_{k \to \infty} F\sub{AL}(x_k,y,\mu_k) =
  \lim_{k\to\infty} F\sub{FEAS}(x_k) = 0$.  This would imply that for
  some $k \geq \kbar$ the algorithm would set $j \gets \jbar + 1$, thus
  violating~\eqref{j-finite}.  Consequently, we may conclude that $\cbar
  > 0$, which proves the first limit in \eqref{zero-2}.  Now, to reach a
  contradiction to the second limit in \eqref{zero-2}, suppose that
  $F\sub{FEAS}(x_k) \nrightarrow 0$.  This, together with
  Assumption~\ref{ass-0}, \eqref{zero-1}, and the supposition that
  $\mu_k \to 0$, implies that there exist a positive constant $\epsilon$
  and an infinite index set $\mathcal{K}$ such that
  \begin{equation}\label{AL-positive}
    \twonorm{F\sub{AL}(x_k,y_k,\mu_k)} \geq \epsilon \text{ for all } k \in \mathcal{K}.
  \end{equation}
  It follows from \eqref{dec-L-ls}, \eqref{cond1}, \eqref{decrease-AL},
  \eqref{AL-positive}, and Lemma \ref{lem-alphabound} that for all $k
  \in \mathcal{K}$ we have
  \begin{align*}
    \Lscr(x_{k+1},y_k,\mu_k) 
      &= \Lscr(x_k + \alpha_k s_k,y_k,\mu_k) \\
      &\leq \Lscr(x_k,y_k,\mu_k) - \eta_s \alpha_k \Deltait\tilde{q}(s_k;x_k,y_k,\mu_k)\\
      &\leq \Lscr(x_k,y_k,\mu_k) - \eta_s \alpha_k \kappa_1 \Deltait\tilde{q}(\sCk;x_k,y_k,\mu_k)\\
      &\leq \Lscr(x_k,y_k,\mu_k) - \eta_s \alpha_k \kappa_1 \kappa_5 \twonorm{F\sub{AL}(x_k,y_k,\mu_k)}^2 \min\left\{\delta, \frac{1}{1+\Omega_\jbar}\right\}\\
      &\leq \Lscr(x_k,y_k,\mu_k) - \eta_s C_{\jbar} \kappa_1 \kappa_5 \epsilon^2 \min\left\{\delta, \frac{1}{1+\Omega_\jbar}\right\}.
  \end{align*}
  This implies that, for all $k\in \mathcal{K}$, the reduction
  $\Lscr(x_k,y_k,\mu_k)-\Lscr(x_{k+1},y_k,\mu_k)$ is greater than or
  equal to a positive constant. In the meantime, we know from Lemma
  \ref{lem-is-descent} and the way we update $x_k$ at each iteration
  that $\Lscr(x_k,y_k,\mu_k)-\Lscr(x_{k+1},y_k,\mu_k) \geq 0$ for all
  $k$.  Therefore, we have reached a contradiction to \eqref{L-fin2}.
  This implies that our supposition that $F\sub{FEAS}(x_k) \nrightarrow
  0$ cannot be true, so we have \eqref{zero-2}.
 \end{proof}
}{
\begin{lemma}[\protect{\cite[Lemma~\ref{lem-26}]{CJJR-arxiv}}]\label{lem-26}
 \lemtwentysix
 \end{lemma}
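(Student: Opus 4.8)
The plan is to establish the three conclusions in the order stated: first the eventual constancy of the multiplier estimate, then the two limits in~\eqref{zero-2}, and finally the assertion about limit points. To begin, since $|\Yscr| < \infty$, the counter $j$ in Algorithm~\ref{alg-aal} eventually stops increasing at some finite value $\jbar$ (see~\eqref{j-finite} and~\eqref{cons-j-finite}); consequently line~\ref{line-y-bd} is reached only finitely often, so $y_k$ is never modified once $k \geq k_\jbar$. Setting $\kbar := k_\jbar$ and letting $y$ denote this terminal value immediately yields~\eqref{zero-1}.

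Next, because $\mu_k \to 0$, Lemma~\ref{lem-sums}(ii)---specifically~\eqref{c-yay}---already gives $\twonorm{c_k} \to \cbar$ for some $\cbar \geq 0$, so the only thing to prove here is that $\cbar > 0$. I would argue by contradiction. If $\cbar = 0$, then since Assumption~\ref{ass-0} keeps $\{x_k\}$ in a compact set on which $f$ and $c$ are $C^2$, the quantities $g_k$ and $J_k$ are bounded, whence $\Grad_x\Lscr(x_k,y,\mu_k) = \mu_k(g_k - J_k\T y) + J_k\T c_k \to 0$ (using $\mu_k \to 0$ and $c_k \to 0$) and also $J_k\T c_k \to 0$. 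Nonexpansiveness of $P$ together with $l \leq x_k \leq u$ then forces $F\sub{AL}(x_k,y,\mu_k) \to 0$ and $F\sub{FEAS}(x_k) \to 0$. But then, for all sufficiently large $k$, both $\twonorm{c_{k+1}} \leq t_\jbar$ and $\twonorm{F\sub{AL}(x_{k+1},y_k,\mu_k)} \leq T_\jbar$, so lines~\ref{line-c-small} and~\ref{crunchy} test true and $j$ is incremented beyond $\jbar$, contradicting~\eqref{j-finite}. Hence $\cbar > 0$, which is the first limit in~\eqref{zero-2}.

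The heart of the argument is the second limit, $F\sub{FEAS}(x_k) \to 0$, which I would again prove by contradiction. Suppose $F\sub{FEAS}(x_k) \nrightarrow 0$. Since $\Grad_x\Lscr(x_k,y,\mu_k) - J_k\T c_k = \mu_k(g_k - J_k\T y) \to 0$ by boundedness and $\mu_k \to 0$, there exist $\epsilon > 0$ and an infinite set $\mathcal{K}$ with $\twonorm{F\sub{AL}(x_k,y,\mu_k)} \geq \epsilon$ for all $k \in \mathcal{K}$. Chaining the backtracking acceptance inequality~\eqref{dec-L-ls}, the Cauchy condition~\eqref{cond1}, the model-decrease bound~\eqref{decrease-AL} of Lemma~\ref{keylemma1}(ii) with $\Omega = \Omega_\jbar$ (legitimate by~\eqref{ass-3}--\eqref{ass-4}), and the uniform step-size lower bound $\alpha_k \geq C_\jbar$ of Lemma~\ref{lem-alphabound}, I obtain that $\Lscr(x_k,y,\mu_k) - \Lscr(x_{k+1},y,\mu_k)$ exceeds a fixed positive constant for every $k \in \mathcal{K}$. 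Since each of these decreases is nonnegative by Lemma~\ref{lem-is-descent}, this contradicts the finiteness of the sum~\eqref{L-fin2} in Lemma~\ref{lem-sums}(ii). Therefore $F\sub{FEAS}(x_k) \to 0$.

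Finally, if $\xstar$ is any limit point of $\{x_k\}$, then continuity gives $v(\xstar) = \half \cbar^2 > 0$ and $F\sub{FEAS}(\xstar) = 0$, so $\xstar$ is an infeasible stationary point for~\eqref{nep} by the characterization following~\eqref{Finf-zero}. I expect the summability argument in the third paragraph to be the main obstacle, since it requires simultaneously invoking several of the earlier lemmas and carefully transferring the ``bounded away from zero'' hypothesis from $F\sub{FEAS}$ to $F\sub{AL}$ using $\mu_k \to 0$.
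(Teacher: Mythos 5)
Your proposal is correct and follows essentially the same path as the paper's proof: the same finite-$\Yscr$ argument for \eqref{zero-1}, the same contradiction via \eqref{c-yay} and the target-update test for $\cbar>0$, and the same chain of \eqref{dec-L-ls}, \eqref{cond1}, \eqref{decrease-AL}, and Lemma~\ref{lem-alphabound} against the summability in \eqref{L-fin2} for $F\sub{FEAS}(x_k)\to 0$. No gaps to report.
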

}

The next lemma considers the case when $\mu$ stays bounded away from
zero.  This is possible, for example, if the algorithm converges to an
infeasible stationary point that is stationary for the AL function for
the final Lagrange multiplier estimate and penalty parameter computed in
the algorithm.

\newcommand{\lemcartoons}{ 
  If $|\Yscr| < \infty$ and $\mu_k = \mu$ for
  some $\mu > 0$ for all sufficiently large $k$, then with $t$ defined
  in \eqref{cons-j-finite} there exist a vector $y$ and integer $\kbar
  \geq 0$ such that
  \begin{equation}\label{fini-1}
    y_k = y \words{and} \twonorm{c_k} \geq t \words{for all} k \geq \kbar,
  \end{equation}
  and we have the limit
  \begin{equation}\label{fini-2}
    \lim_{k\to\infty} F\sub{FEAS}(x_k) = 0.
  \end{equation}
  Therefore, every limit point of $\{x_k\}_{k\geq 0}$ is an infeasible 
  stationary point.
}

\iftoggle{arxivpaper}{
\begin{lemma}\label{cartoons}
 \lemcartoons
 \end{lemma}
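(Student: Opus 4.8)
The plan is to first drive $\twonorm{F\sub{AL}(x_k,y,\mu)}$ to zero, and then to use this fact together with the steering machinery to conclude that $\twonorm{c_k}$ is eventually bounded away from zero and that $F\sub{FEAS}(x_k)\to0$. Since $|\Yscr|<\infty$, the counter $j$ in Algorithm~\ref{alg-aal} is eventually fixed at $\jbar$ (cf.~\eqref{j-finite}), so by the multiplier update rules there are a vector $y$ and an index $\kbar_0$ such that, for all $k\ge\kbar_0$, we have $y_k=y$, $\mu_k=\mu$, $t_j=t$, and $T_j=T$ (with $t,T$ as in~\eqref{cons-j-finite}); by Lemma~\ref{lem-alphabound} we may also assume $\alpha_k\ge C_\jbar>0$ for all such $k$, and~\eqref{ass-3}--\eqref{ass-4} hold with $\Omega=\Omega_\jbar$. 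For $k\ge\kbar_0$ the line-search inequality~\eqref{dec-L-ls} and Lemma~\ref{lem-is-descent} give $\Lscr(x_k,y_k,\mu_k)-\Lscr(x_{k+1},y_k,\mu_k)\ge\eta_s\alpha_k\Deltait\tilde{q}(s_k;x_k,y,\mu)\ge0$; since~\eqref{L-fin1} of Lemma~\ref{lem-sums}(i) bounds the sum of these nonnegative quantities, they tend to zero. Combining this with $\alpha_k\ge C_\jbar$, with~\eqref{cond1}, and with~\eqref{decrease-AL} evaluated at $\Omega_\jbar$ then yields $\twonorm{F\sub{AL}(x_k,y,\mu)}\to0$, whence $\twonorm{s_k}\le\TRALk\le2\delta\twonorm{F\sub{AL}(x_k,y,\mu)}\to0$ as well.

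Next I would prove~\eqref{fini-1}. Suppose $\twonorm{c_{k+1}}\le t$ for some $k\ge\kbar_0$. Then the test on line~\ref{line-c-small} passes and $\yhat_{k+1}$ is formed, but the test on line~\ref{crunchy} must fail, since otherwise $j$ would be incremented beyond $\jbar$; this forces $\twonorm{F\sub{AL}(x_{k+1},y,\mu)}>T$. Because $\twonorm{F\sub{AL}(x_k,y,\mu)}\to0$, this can happen for only finitely many $k$, so there is $\kbar\ge\kbar_0$ with $\twonorm{c_k}\ge t$---equivalently $v_k\ge\half t^2$---for all $k\ge\kbar$.

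To establish~\eqref{fini-2}, suppose for contradiction that $\twonorm{F\sub{FEAS}(x_k)}\ge\epsilon_1>0$ along an infinite index set $\Kscr$. Then~\eqref{decrease-feas} (with $\Omega_\jbar$) and~\eqref{cond2} yield $\delmodv(r_k;x_k)\ge\kappa_2\kappa_4\epsilon_1^2\min\{\delta,1/(1+\Omega_\jbar)\}>0$ for $k\in\Kscr$. Moreover, for all large $k$ we have $t_j=t$ and $v_k\ge\half t^2$, so $v_k-\half(\kappa_t t_j)^2\ge\half(1-\kappa_t^2)t^2>0$; hence the steering condition~\eqref{cond2b} forces $\delmodv(s_k;x_k)\ge\epsilon_3$ for all large $k\in\Kscr$, where $\epsilon_3:=\min\{\kappa_2\kappa_3\kappa_4\epsilon_1^2\min\{\delta,1/(1+\Omega_\jbar)\},\half(1-\kappa_t^2)t^2\}>0$ is independent of $k$. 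On the other hand, $\delmodv(s_k;x_k)=-c_k\T J_k s_k-\half\twonorm{J_k s_k}^2$, and since $s_k\to0$ while $c_k$ and $J_k$ remain bounded on the compact set of Assumption~\ref{ass-0}, we get $\delmodv(s_k;x_k)\to0$, a contradiction. Therefore $F\sub{FEAS}(x_k)\to0$. Finally, any limit point $\xstar$ of $\{x_k\}$---and one exists since $\{x_k\}$ lies in a compact set---satisfies $l\le\xstar\le u$, $\twonorm{c(\xstar)}\ge t>0$, and, by continuity of $P$, $J$, and $c$, $F\sub{FEAS}(\xstar)=0$; by the characterization following~\eqref{Finf-zero}, $\xstar$ is an infeasible stationary point for~\eqref{nep}.

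The main obstacle is the passage from $\twonorm{F\sub{AL}(x_k,y,\mu)}\to0$ to $\twonorm{F\sub{FEAS}(x_k)}\to0$. Unlike in the companion case of Lemma~\ref{lem-26}, here $\mu$ does not vanish, so $\Grad_x\Lscr(x_k,y,\mu)=\mu(g_k-J_k\T y)+J_k\T c_k$ does not approach $J_k\T c_k$, and $F\sub{AL}\to0$ does not by itself yield $F\sub{FEAS}\to0$. The resolution is~\eqref{fini-1}: keeping $\twonorm{c_k}$ uniformly above the stalled target $t$ makes the branch $v_k-\half(\kappa_t t_j)^2$ of the minimum in~\eqref{cond2b} strictly positive and bounded away from zero, so persistent non-stationarity for $v$ would demand a persistently large predicted reduction $\delmodv(s_k;x_k)$, which is incompatible with $s_k\to0$.
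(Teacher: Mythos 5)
Your proposal is correct and follows essentially the same route as the paper's proof: establish $\twonorm{F\sub{AL}(x_k,y,\mu)}\to0$ via the line-search decrease, Lemma~\ref{lem-alphabound}, and \eqref{decrease-AL}; deduce \eqref{fini-1} from the fact that line~\ref{crunchy} can succeed only finitely often; and then derive $F\sub{FEAS}(x_k)\to0$ by playing the steering condition \eqref{cond2b} (whose right-hand side is bounded away from zero by \eqref{fini-1} and \eqref{decrease-feas}) against $\delmodv(s_k;x_k)\to0$. The only cosmetic differences are that you obtain $F\sub{AL}\to0$ directly from the summability in Lemma~\ref{lem-sums}(i) rather than from a contradiction with $\Lscr$ being bounded below, and you phrase the final contradiction as a violation of \eqref{cond2b} itself rather than as a forced decrease of the (fixed) penalty parameter; both are equivalent to the paper's argument.
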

\begin{proof}
  Since $|\Yscr| < \infty$, we know that~\eqref{j-finite}
  and~\eqref{cons-j-finite} hold for some $\jbar \geq 0$, and since we
  suppose that $\mu_k = \mu > 0$ for all sufficiently large $k$, it
  follows by the mechanisms for updating the Lagrange multiplier
  estimates in Algorithm~\ref{alg-aal} that there exists $y$ and a
  scalar $k' \geq k_\jbar$ such that
  \begin{equation} \label{mu-fx}
    \mu_k = \mu \words{and} y_k = y \words{for all} k \geq k'.
  \end{equation}
  Our next goal is to prove that
  \begin{equation} \label{lim-a}
    \lim_{k \to \infty} \twonorm{F\sub{AL}(x_k,y,\mu)} = 0.
  \end{equation}
  Indeed, to reach a contradiction, suppose that \eqref{lim-a} does not
  hold.  It then follows that there exist a positive number $\zeta$ and
  an infinite index set $\mathcal{K}'$ with all elements greater than or
  equal to $k'$ such that
  \begin{equation}\label{AL-positive-2}
    \twonorm{F\sub{AL}(x_k,y,\mu)} \geq \zeta \text{ for all } k \in \mathcal{K}'.
  \end{equation}
  Similar to the proof of Lemma~\ref{lem-26}, it then follows from
  \eqref{AL-positive-2}, \eqref{dec-L-ls}, \eqref{cond1},
  \eqref{decrease-AL}, and Lemma \ref{lem-alphabound} that for all $k
  \in \mathcal{K}'$ we have
  \begin{align*}
    \Lscr(x_{k+1},y,\mu) 
      &= \Lscr(x_k + \alpha_k s_k,y,\mu) \\
      &\leq \Lscr(x_k,y,\mu) - \eta_s \alpha_k \Deltait\tilde{q}(s_k;x_k,y,\mu)\\
      &\leq \Lscr(x_k,y,\mu) - \eta_s \alpha_k \kappa_1 \Deltait\tilde{q}(\sCk;x_k,y,\mu)\\
      &\leq \Lscr(x_k,y,\mu) - \eta_s \alpha_k \kappa_1 \kappa_5 \twonorm{F\sub{AL}(x_k,y,\mu)}^2 \min\left\{\delta, \frac{1}{1+\Omega_\jbar}\right\}\\
      &\leq \Lscr(x_k,y,\mu) - \eta_s C_{\jbar} \kappa_1 \kappa_5 \zeta^2 \min\left\{\delta, \frac{1}{1+\Omega_\jbar}\right\}.
  \end{align*}
  This implies that, for all $k\in \mathcal{K}'$, the reduction
  $\Lscr(x_k,y,\mu)-\Lscr(x_{k+1},y,\mu)$ is greater than or equal to a
  positive constant.  However, we know from Assumption \ref{ass-0} that
  $\Lscr(x_k,y,\mu)$ is bounded below.  Therefore, we have reached a
  contradiction, so \eqref{lim-a} must hold.

  The first consequence of \eqref{lim-a} is that it allows us to prove
  \eqref{fini-1}.  Indeed, it follows that there exists $\kbar \geq k'$
  such that $\twonorm{c_k} \geq t$ for all $k \geq \kbar$, since
  otherwise \eqref{lim-a} would imply that, for some $k \geq \kbar$,
  Algorithm~\ref{alg-aal} would set $j \gets \jbar + 1$, which
  violates~\eqref{j-finite}.  Thus, along with \eqref{mu-fx}, we have
  proved \eqref{fini-1}.

  The second consequence of \eqref{lim-a} is that it allows us to prove
  \eqref{fini-2}, which is all that remains to complete the proof of the
  lemma.  It follows from \eqref{cond1}, \eqref{lim-a}, and part (i) of
  Lemma~\ref{lem-algo} that
  \begin{equation} \label{s-z}
    \lim_{k\to\infty} \twonorm{s_k} \leq \lim_{k\to\infty} \TRALk = \lim_{k\to\infty} \Gamma_k \delta\twonorm{F\sub{AL}(x_k,y,\mu)} =  0. 
  \end{equation}
  Furthermore, from~\eqref{s-z} and Assumption~\ref{ass-0}, we have
  \begin{equation} \label{need-1}
    \lim_{k\to\infty} \delmodv(s_k;x_k) = 0,
  \end{equation}
  and, along with \eqref{cons-j-finite} and \eqref{fini-1}, we have
  \begin{equation} \label{v-pos}
    v_k - \half(\kappa_t t_\jbar)^2 \geq \half t^2 -  \half(\kappa_t t)^2 = \half(1-\kappa_t^2) t^2 > 0 \words{for all} k \geq \kbar.
  \end{equation}
  We may use these facts to prove $F\sub{FEAS}(x_k) \to 0$.  In
  particular, in order to derive a contradiction, suppose that
  $F\sub{FEAS}(x_k) \nrightarrow 0$.  Then, there exist a positive
  number $\xi$ and an infinite index set $\mathcal{K}''$ such that
  \begin{equation}\label{FEAS-positive}
    \twonorm{F\sub{FEAS}(x_k)} \geq \xi \words{for all} k \in \mathcal{K}''.
  \end{equation}
  Using~\eqref{cond2}, \eqref{decrease-feas}, \eqref{ass-4}, and \eqref{j-finite}, we then find for $k \in \mathcal{K}''$ that
  \begin{equation}\label{finally}
    \delmodv(r_{k};x_{k}) \geq \kappa_2 \delmodv(\rC_{k};x_{k}) \geq \kappa_2 \kappa_4\xi^2\min\left\{\frac{1}{1+\Omega_\jbar}, \delta \right\} =: \zeta' > 0.
  \end{equation}
  We may now combine~\eqref{finally}, \eqref{v-pos}, and~\eqref{need-1}
  to state that~\eqref{cond2b} must be violated for sufficiently large
  $k \in \mathcal{K}''$ and, consequently, the penalty parameter will be
  decreased.  However, this is a contradiction to \eqref{mu-fx}, so we
  conclude that $F\sub{FEAS}(x_k) \to 0$.  The fact that every limit
  point of $\{x_k\}_{k\geq 0}$ is an infeasible stationary point follows
  since $\twonorm{c_k} \geq t$ for all $k \geq \kbar$ from
  \eqref{fini-1} and $F\sub{FEAS}(x_k) \to 0$.
 \end{proof}
}{
\begin{lemma}[\protect{\cite[Lemma~\ref{cartoons}]{CJJR-arxiv}}]\label{cartoons}
 \lemcartoons
 \end{lemma}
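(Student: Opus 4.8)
The plan is to follow the template of Lemma~\ref{lem-26}, with the roles adjusted to the case of an eventually constant penalty parameter. First, since $|\Yscr| < \infty$ we have \eqref{j-finite}--\eqref{cons-j-finite}, so the multiplier-update mechanism of Algorithm~\ref{alg-aal} eventually stops firing: there exist a vector $y$ and an index $k'$ with $\mu_k = \mu$ and $y_k = y$ for all $k \geq k'$. The heart of the argument is then to show $\twonorm{F\sub{AL}(x_k,y,\mu)} \to 0$. I would argue by contradiction: if this fails there are $\zeta > 0$ and an infinite set $\mathcal{K}'$ of indices $\geq k'$ along which $\twonorm{F\sub{AL}(x_k,y,\mu)} \geq \zeta$; combining the line-search acceptance inequality \eqref{dec-L-ls}, the Cauchy-decrease condition \eqref{cond1}, the Cauchy lower bound \eqref{decrease-AL} of Lemma~\ref{keylemma1}, and the uniform step-size lower bound $\alpha_k \geq C_\jbar$ of Lemma~\ref{lem-alphabound} shows that $\Lscr(x_k,y,\mu) - \Lscr(x_{k+1},y,\mu)$ is bounded below by a fixed positive constant for every $k \in \mathcal{K}'$. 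Since Lemma~\ref{lem-is-descent} makes every per-iteration reduction nonnegative, this forces $\sum_k\big(\Lscr(x_k,y,\mu) - \Lscr(x_{k+1},y,\mu)\big) = \infty$; but that sum telescopes and $\Lscr(\cdot,y,\mu)$ is bounded below on the compact set of Assumption~\ref{ass-0}, a contradiction.

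From $\twonorm{F\sub{AL}(x_k,y,\mu)} \to 0$ I would deduce \eqref{fini-1} and set up \eqref{fini-2}. For \eqref{fini-1}: if $\twonorm{c_k} < t$ held for arbitrarily large $k$, then because $\twonorm{F\sub{AL}(x_k,y,\mu)}$ is eventually below any tolerance, the tests on line~\ref{line-c-small} and line~\ref{crunchy} (with $F\sub{AL}$ serving as the admissible optimality measure) would force $j \gets \jbar+1$ at some such $k$, contradicting \eqref{j-finite}; hence $\twonorm{c_k} \geq t$ for all $k \geq \kbar$ for some $\kbar \geq k'$, and together with $y_k = y$ this is \eqref{fini-1}. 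Next, \eqref{sandy-AL} with $\Gamma_k \in (1,2]$ (Lemma~\ref{lem-algo}) gives $\TRALk \to 0$, so \eqref{cond1} yields $\twonorm{s_k} \to 0$, and continuity of $\qv$ on the compact set of Assumption~\ref{ass-0} gives $\delmodv(s_k;x_k) \to 0$.

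It then remains to show $F\sub{FEAS}(x_k) \to 0$, which I would again do by contradiction. If not, there are $\xi > 0$ and an infinite set $\mathcal{K}''$ with $\twonorm{F\sub{FEAS}(x_k)} \geq \xi$ on it; then \eqref{cond2} and the feasibility Cauchy bound \eqref{decrease-feas} (with \eqref{ass-4} supplying $\Omega = \Omega_\jbar$) give $\delmodv(r_k;x_k) \geq \zeta'$ for some fixed $\zeta' > 0$ and all $k \in \mathcal{K}''$. Meanwhile \eqref{fini-1} and $\kappa_t \in (0,1)$ give $v_k - \half(\kappa_t t_\jbar)^2 \geq \half(1-\kappa_t^2)t^2 > 0$ for all $k \geq \kbar$, so the right-hand side of the steering test \eqref{cond2b} is bounded below by the fixed positive number $\min\{\kappa_3 \zeta',\, \half(1-\kappa_t^2)t^2\}$ for $k \in \mathcal{K}''$ with $k \geq \kbar$. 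Since $\delmodv(s_k;x_k) \to 0$, condition \eqref{cond2b} must fail for all large such $k$, so the \textbf{while} loop on line~\ref{while-2} would decrease $\mu_k$ there, contradicting $\mu_k = \mu$ for large $k$. Hence $F\sub{FEAS}(x_k) \to 0$, and since $\twonorm{c_k} \geq t > 0$ along the whole tail, every limit point of $\{x_k\}$ is an infeasible stationary point.

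I expect the delicate point to be this last contradiction: one must make sure the conclusion $\delmodv(s_k;x_k) \to 0$ is applied to exactly the quantity the algorithm checks in line~\ref{while-2}, and that the positive lower bound on the right-hand side of \eqref{cond2b} is \emph{uniform} over the relevant subsequence --- which is where the definition of $\jbar$ via \eqref{j-finite}, the resulting constancy $t_j \equiv t$ for $j = \jbar$, and the uniform curvature bounds \eqref{ass-3}--\eqref{ass-4} are all used.
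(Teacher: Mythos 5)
Your proposal is correct and follows essentially the same route as the paper's proof: the same contradiction argument via \eqref{dec-L-ls}, \eqref{cond1}, \eqref{decrease-AL}, and Lemma~\ref{lem-alphabound} to establish $\twonorm{F\sub{AL}(x_k,y,\mu)}\to 0$, the same appeal to \eqref{j-finite} to obtain \eqref{fini-1}, and the same final contradiction in which \eqref{decrease-feas} and the uniform positivity of $v_k-\half(\kappa_t t)^2$ force the steering test \eqref{cond2b} to fail and the penalty parameter to be decreased. The "delicate point" you flag at the end is handled in the paper exactly as you anticipate.
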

}

This completes the analysis for the case that the set $\Yscr$ is finite.

\subsection{An infinite number of multiplier updates}

We now suppose that $|\Yscr| = \infty$.  In this case, it follows from the procedures for updating the Lagrange multiplier estimate and target values in Algorithm~\ref{alg-aal} that
\begin{equation} \label{ts-zero}
  \lim_{j\to\infty} t_j = \lim_{j\to\infty} T_j = 0.
\end{equation}

As in the previous subsection, we split the analysis in this subsection
into two results.  This time, we begin by considering the case when the
penalty parameter remains bounded below and away from zero.  In this
scenario, we state the following result that a subsequence of the
iterates converges to a first-order stationary point.  
\iftoggle{arxivpaper}{
The proof of the
corresponding result in \cite{curtis12} applies for
Algorithm~\ref{alg-aal}, so we do not provide it here for the sake of
brevity; the proof is relatively straightforward, essentially relying on
the fact that \eqref{ts-zero} squeezes the constraint violation and
stationarity measure error to zero to yield \eqref{goods}.
}

\begin{lemma}[\protect{\cite[Lemma~3.10]{curtis12}}.] \label{lem-good}
  If $|\Yscr| = \infty$ and $\mu_k = \mu$ for some $\mu > 0$ for all 
  sufficiently large $k$, then 
  \begin{subequations}\label{goods}
    \begin{align}
      \lim_{j\to\infty} c_{k_j} &= 0 \label{good-1} \\
      \words{and} \lim_{j\to\infty} F\sub{L}(x_{k_j},\yhat_{k_j}) &= 0. \label{good-2}
    \end{align}
  \end{subequations}
  Thus, any limit point $(\xstar,\ystar)$ of $\{(x_{k_j},\yhat_{k_j})\}_{j 
   \geq 0}$ is first-order stationary for \eqref{nep}. 
\end{lemma}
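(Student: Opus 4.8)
The plan is to exploit directly the defining properties of the index set $\Yscr$ in~\eqref{def-y-iter}, together with the fact that $|\Yscr| = \infty$ forces the target sequences to satisfy $t_j \to 0$ and $T_j \to 0$ by~\eqref{ts-zero}. For each $j$, membership $k_j \in \Yscr$ gives $\twonorm{c_{k_j}} \leq t_j$, so letting $j \to \infty$ immediately yields~\eqref{good-1}. The remaining work is to establish~\eqref{good-2}, after which the statement about limit points follows by continuity.

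For~\eqref{good-2} I would argue by cases on which term attains the minimum in~\eqref{def-y-iter}. If $\twonorm{F\sub{L}(x_{k_j},\yhat_{k_j})} \leq T_j$, there is nothing to do beyond letting $j \to \infty$. If instead $\twonorm{F\sub{AL}(x_{k_j},y_{k_j-1},\mu_{k_j-1})} \leq T_j$, I would first recall that $\mu_{k_j-1} = \mu$ for all sufficiently large $j$ and that $\Grad_x\Lscr(x,y,\mu) = \mu\big(g(x) - J(x)\T\pi(x,y,\mu)\big) = \mu\,\Grad_x\ell\big(x,\pi(x,y,\mu)\big)$, so that, writing $\pi_{k_j} := \pi(x_{k_j},y_{k_j-1},\mu)$, we have $F\sub{AL}(x_{k_j},y_{k_j-1},\mu) = P\big(x_{k_j} - \mu\,\Grad_x\ell(x_{k_j},\pi_{k_j})\big) - x_{k_j}$. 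I would then invoke the standard monotonicity of the projected-gradient residual---for fixed $x \in [l,u]$ and fixed $v$, the map $t \mapsto \twonorm{P(x - tv) - x}/t$ is nonincreasing on $(0,\infty)$, the quantitative refinement of Lemma~\ref{lem-is-crit} behind the projected-gradient estimates of~\cite{Mor88} used earlier---to conclude $\twonorm{F\sub{L}(x_{k_j},\pi_{k_j})} = \twonorm{P(x_{k_j} - \Grad_x\ell(x_{k_j},\pi_{k_j})) - x_{k_j}} \leq \max\{1,1/\mu\}\,\twonorm{F\sub{AL}(x_{k_j},y_{k_j-1},\mu)} \leq \max\{1,1/\mu\}\,T_j$. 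Finally, the defining property~\eqref{new-y} of $\yhat_{k_j}$ gives $\twonorm{F\sub{L}(x_{k_j},\yhat_{k_j})} \leq \twonorm{F\sub{L}(x_{k_j},\pi_{k_j})}$, so in both cases $\twonorm{F\sub{L}(x_{k_j},\yhat_{k_j})} \to 0$, which is~\eqref{good-2}.

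To obtain the conclusion about limit points, suppose $(\xstar,\ystar)$ is a limit point of $\{(x_{k_j},\yhat_{k_j})\}_{j\geq0}$ and pass to a subsequence along which $(x_{k_j},\yhat_{k_j}) \to (\xstar,\ystar)$. Each iterate lies in $[l,u]$, since $x_{k+1} = (1-\alpha_k)x_k + \alpha_k(x_k + s_k)$ is a convex combination of points in $[l,u]$, and $P$, $g$, $J$, $c$ are continuous on the compact set furnished by Assumption~\ref{ass-0}; hence taking limits in~\eqref{good-1} and~\eqref{good-2} yields $c(\xstar) = 0$ and $F\sub{L}(\xstar,\ystar) = 0$, i.e.\ $F\sub{OPT}(\xstar,\ystar) = 0$ by~\eqref{F-zero}, so $(\xstar,\ystar)$ is first-order stationary for~\eqref{nep}.

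The only delicate point I anticipate is the passage from smallness of $F\sub{AL}(\cdot,\cdot,\mu)$ to smallness of $F\sub{L}(\cdot,\pi_{k_j})$, which requires absorbing the extra factor $\mu$ multiplying the gradient inside the projection. Since $\mu$ is fixed and positive here, this factor is a harmless constant and the monotonicity property makes the estimate routine; in particular no boundedness of $\{y_{k_j-1}\}$ is needed, because the argument works with residuals directly rather than comparing $\pi_{k_j}$ to $y_{k_j-1}$. Everything else is a direct squeeze driven by $t_j, T_j \to 0$ and continuity, exactly as in~\cite[Lemma~3.10]{curtis12}.
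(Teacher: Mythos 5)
Your proof is correct, and the overall skeleton matches the paper's: \eqref{good-1} is an immediate squeeze from $\twonorm{c_{k_j}} \leq t_j \to 0$, and \eqref{good-2} is obtained by case analysis on which term attains the minimum in line~\ref{crunchy}, finishing with~\eqref{new-y}. The one place where you genuinely diverge is the treatment of the case $\twonorm{F\sub{AL}(x_{k_j},y_{k_j-1},\mu)} \leq T_j$. The paper's route expands $\Grad_x\Lscr(x,y,\mu) = \mu(g - J\T y) + J\T c$, uses \eqref{good-1} (together with boundedness of $J$ under Assumption~\ref{ass-0}) to kill the $J\T c$ term, and thereby passes to $F\sub{L}(x_{k_j},y_{k_j-1}) \to 0$, i.e.\ it exploits the \emph{first} argument of the minimum in~\eqref{new-y}. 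You instead use the identity $\Grad_x\Lscr(x,y,\mu) = \mu\,\Grad_x\ell\big(x,\pi(x,y,\mu)\big)$ and the monotonicity of $t \mapsto \twonorm{P(x-tv)-x}/t$ to pass directly to $F\sub{L}(x_{k_j},\pi_{k_j}) \to 0$, exploiting the \emph{second} argument of the minimum in~\eqref{new-y}. Your version buys two things: the second case no longer depends on \eqref{good-1}, and you get the explicit uniform bound $\twonorm{F\sub{L}(x_{k_j},\yhat_{k_j})} \leq \max\{1,1/\mu\}\,T_j$ valid in both cases, which removes the need for the paper's splitting of $\Yscr$ into the subsets $\Yscr'$ and $\Yscr\setminus\Yscr'$ and the attendant "if this subset is infinite" bookkeeping. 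The scaling/monotonicity fact you invoke is also implicitly needed in the paper's version (to absorb the factor $\mu$ inside the projection), so you are not paying anything extra for it; both arguments rely on the fixed, positive value of $\mu$ in exactly the same way. The concluding limit-point argument via continuity is identical to the paper's.
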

\iftoggle{fullpaper}
{
\begin{proof}
  Since $|\Yscr|=\infty$, it follows that the condition in line \ref{line-c-small} holds an infinite number of times.  The limit~\eqref{good-1} then follows by \eqref{ts-zero} since line \ref{line-kj} sets $k_j \gets k+1$ for all $k_j \in \Yscr$.
  
  To prove~\eqref{good-2}, we first define 
  \begin{equation*}
    \Yscr'
    = \{k_j\in\Yscr : \twonorm{F\sub{L}(x_{k_j},\yhat_{k_j})}
    \leq \twonorm{F\sub{AL}(x_{k_j},y_{k_j-1},\mu_{k_j-1})} \}.
  \end{equation*}
  It follows from~\eqref{ts-zero} and line~\ref{crunchy} of
  Algorithm~\ref{alg-aal} that if $\Yscr'$ is infinite, then 
  \begin{equation} \label{Fl-lim}
    \lim_{k_j\in\Yscr'} F\sub{L}(x_{k_j},\yhat_{k_j})=0.
  \end{equation}
  Meanwhile, it follows from~\eqref{ts-zero} and line~\ref{crunchy} of
  Algorithm~\ref{alg-aal} that if $\Yscr\backslash\Yscr'$ is infinite,
  then 
  \begin{equation} \label{dual-1}
    \lim_{k_j\in\Yscr\backslash\Yscr'} F\sub{AL}(x_{k_j},y_{k_j-1},\mu_{k_j-1}) = 0.
  \end{equation}
  Under Assumption~\ref{ass-0}, \eqref{dual-1} may be combined
  with~\eqref{good-1} and the fact that $\mu_k = \mu$ for some $\mu >
  0$ to deduce that if $\Yscr\backslash\Yscr'$ is infinite, then 
  \begin{equation} \label{dual-2-pre}
    \lim_{k_j\in\Yscr\backslash\Yscr'} F\sub{L}(x_{k_j}, y_{k_j-1}) = 0.
  \end{equation}
  We may now combine~\eqref{dual-2-pre} with~\eqref{new-y} to state
  that if $\Yscr\backslash\Yscr'$ is infinite, then 
  \begin{equation} \label{dual-2}
    \lim_{k_j\in\Yscr\backslash\Yscr'} F\sub{L}(x_{k_j},\yhat_{k_j}) = 0.
  \end{equation}
  The desired result~\eqref{good-2} now follows from \eqref{Fl-lim},
  \eqref{dual-2}, and the supposition that $|\Yscr|=\infty$.\qed 
\end{proof}
}

Finally, we consider the case when the penalty parameter converges to
zero.  
\iftoggle{arxivpaper}{
Again, we do not provide a proof of the following lemma since
that of the corresponding proof in \cite{curtis12} suffices here as
well.
}

\begin{lemma}[\protect{\cite[Lemma~3.13]{curtis12}}] \label{lem-29}
  If $|\Yscr| = \infty$ and $\mu_k \to 0$, then
  \begin{equation}\label{randy}
    \lim_{k\to\infty} c_k = 0.
  \end{equation}
  If, in addition, there exists a positive integer $p$ such that $\mu_{k_j-1} \geq \gamma_\mu^p\mu_{k_{j-1}-1}$ for all sufficiently large $j$, then there exists an infinite ordered set $\Jscr \subseteq\mathbb{N}$ such that
  \begin{equation}\label{dandy}
    \lim_{j\in\Jscr,j\to\infty} \|F\sub{L}(x_{k_j},\yhat_{k_j})\|_2 = 0 \words{or} \lim_{j\in\Jscr,j\to\infty} \|F\sub{L}(x_{k_j},\pi(x_{k_j},y_{k_j-1},\mu_{k_j-1}))\|_2 = 0.
  \end{equation}
  In such cases, if the first (respectively, second) limit in \eqref{dandy} holds, then along with \eqref{randy} it follows that any limit point of $\{(x_{k_j},\yhat_{k_j})\}_{j\in\Jscr}$ (respectively, $\{(x_{k_j},y_{k_j-1})\}_{j\in\Jscr}$)
is a first-order stationary point for \eqref{nep}.
\end{lemma}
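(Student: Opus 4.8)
The plan is to follow the structure of the corresponding lemma in \cite{curtis12}, proving the unconditional limit \eqref{randy} first and then the conditional stationarity statement \eqref{dandy}. For \eqref{randy}: since $|\Yscr| = \infty$, the counter $j$ in Algorithm~\ref{alg-aal} is incremented infinitely often, so the target‑updating rules give \eqref{ts-zero} and in particular $t_j \to 0$; because for every $k_j \in \Yscr$ the test on line~\ref{line-c-small} passed, we have $\twonorm{c_{k_j}} \le t_j$, hence $\liminf_{k\to\infty}\twonorm{c_k} = 0$. Since $\mu_k\to 0$, we are in case (ii) of Lemma~\ref{lem-sums}, whose part \eqref{c-yay} gives $\twonorm{c_k}\to\cbar$ for some $\cbar\ge0$; combining the two statements forces $\cbar = 0$, which is \eqref{randy}.

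For the conditional part, assume in addition that $\mu_{k_j-1}\ge\gamma_\mu^p\mu_{k_{j-1}-1}$ for all large $j$. By the definition \eqref{def-y-iter} of $\Yscr$, for each $k_j\in\Yscr$ at least one of $\twonorm{F\sub{L}(x_{k_j},\yhat_{k_j})}$ and $\twonorm{F\sub{AL}(x_{k_j},y_{k_j-1},\mu_{k_j-1})}$ is at most $T_j$, so I split into two exhaustive cases. In the first case $\twonorm{F\sub{L}(x_{k_j},\yhat_{k_j})}\le T_j$ for infinitely many $j$; taking $\Jscr$ to be this set and using $T_j\to0$ gives the first limit in \eqref{dandy}, and then Assumption~\ref{ass-0}, continuity of $f$ and $c$, \eqref{randy}, and the definition \eqref{F-zero} of $F\sub{OPT}$ show that every limit point of $\{(x_{k_j},\yhat_{k_j})\}_{j\in\Jscr}$ is first-order stationary for \eqref{nep}. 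In the second case $\twonorm{F\sub{AL}(x_{k_j},y_{k_j-1},\mu_{k_j-1})}\le T_j$ for all large $j$, which is the substantive case.

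To handle the second case, write $\pi_{k_j}:=\pi(x_{k_j},y_{k_j-1},\mu_{k_j-1})$ and recall $\Grad_x\Lscr(x_{k_j},y_{k_j-1},\mu_{k_j-1}) = \mu_{k_j-1}(g_{k_j}-J_{k_j}\T\pi_{k_j})$, so $F\sub{AL}(x_{k_j},y_{k_j-1},\mu_{k_j-1})$ and $F\sub{L}(x_{k_j},\pi_{k_j})$ are the projected‑gradient residuals of the \emph{same} direction at step sizes $\mu_{k_j-1}$ and $1$. Invoking the standard monotonicity of the projected residual under scaling (the map $t\mapsto\twonorm{P(x-td)-x}$ is nondecreasing and $t\mapsto t^{-1}\twonorm{P(x-td)-x}$ is nonincreasing on $(0,\infty)$) together with $\mu_{k_j-1}\le1$ for large $j$ (since $\mu_k\to0$), I get
\begin{equation*}
  \twonorm{F\sub{L}(x_{k_j},\pi_{k_j})} \le \frac{1}{\mu_{k_j-1}}\,\twonorm{F\sub{AL}(x_{k_j},y_{k_j-1},\mu_{k_j-1})} \le \frac{T_j}{\mu_{k_j-1}}.
\end{equation*}
It then remains to extract an infinite $\Jscr$ along which $T_j/\mu_{k_j-1}\to0$; once this is done, $F\sub{L}(x_{k_j},\pi_{k_j})\to0$ along $\Jscr$, so the second limit in \eqref{dandy} holds, and since $\pi_{k_j}=y_{k_j-1}-\mu_{k_j-1}^{-1}c_{k_j}$ by \eqref{def-pi} the relevant sequence is $\{(x_{k_j},y_{k_j-1})\}_{j\in\Jscr}$; combining $F\sub{L}(x_{k_j},\pi_{k_j})\to0$ with \eqref{randy} and continuity shows every such limit point is first-order stationary for \eqref{nep}.

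The main obstacle is exactly this final extraction. The added hypothesis bounds the number of penalty reductions occurring between two consecutive multiplier updates by $p$, hence controls the per‑update contraction of $\mu_{k_j-1}$, while $T_{j+1}=\gamma_T T_j$ forces $T_j$ down geometrically; one must carefully track these two decrease rates to produce a subsequence on which $T_j/\mu_{k_j-1}$ vanishes, and this rate‑tracking argument is precisely the nonroutine step carried out in \cite{curtis12}. Everything else is mechanical: the proof of \eqref{randy} is a squeeze against Lemma~\ref{lem-sums}(ii), the scaling inequality above is classical, and the passage to first-order stationary limit points uses only the compactness provided by Assumption~\ref{ass-0} and continuity of the problem functions.
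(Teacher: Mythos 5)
Your proof of \eqref{randy} is correct and coincides with the paper's own argument: $\twonorm{c_{k_j}}\le t_j\to0$ along $\Yscr$ together with the full-sequence limit $\twonorm{c_k}\to\cbar$ from \eqref{c-yay} forces $\cbar=0$. For everything after \eqref{randy} the paper supplies no proof at all---it simply invokes \cite[Lemma~3.13]{curtis12}---so the comparison rests on what you actually wrote, and there the decisive step is absent. Your case split is fine, and the case in which $\twonorm{F\sub{L}(x_{k_j},\yhat_{k_j})}\le T_j$ holds infinitely often needs nothing more than $T_j\to0$. But in the substantive case you reduce the second alternative of \eqref{dandy} to exhibiting an infinite $\Jscr$ along which $T_j/\mu_{k_j-1}\to0$, and then explicitly defer that extraction to \cite{curtis12}. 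That is a genuine gap, not a routine omission.

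Moreover, the reduction as you have set it up cannot be completed from the stated hypotheses. The assumption $\mu_{k_j-1}\ge\gamma_\mu^p\mu_{k_{j-1}-1}$ is a \emph{lower} bound on the per-update contraction of the penalty parameter, so telescoping gives only $\mu_{k_j-1}\ge C\gamma_\mu^{pj}$, while $T_j=\gamma_T^{\,j-1}T_1$ exactly. Your (correct) scaling inequality $\twonorm{F\sub{L}(x_{k_j},\pi(x_{k_j},y_{k_j-1},\mu_{k_j-1}))}\le T_j/\mu_{k_j-1}$ therefore yields at best $T_j/\mu_{k_j-1}\le C'(\gamma_T/\gamma_\mu^p)^{j}$, which vanishes only if $\gamma_T<\gamma_\mu^p$---a relation imposed nowhere in the algorithm and violated for every $p\ge1$ by the paper's own default values $\gamma_T=\gamma_\mu=0.1$. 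Concretely, if $\mu$ is cut by exactly one factor of $\gamma_\mu$ between consecutive multiplier updates and $\gamma_\mu=\gamma_T$, then $T_j/\mu_{k_j-1}$ is eventually a positive constant, so no subsequence along which it vanishes exists, and the scaling inequality delivers only boundedness of $\twonorm{F\sub{L}(x_{k_j},\pi(x_{k_j},y_{k_j-1},\mu_{k_j-1}))}$ rather than convergence to zero. Whatever closes this case in \cite{curtis12} must exploit more than the race between the geometric decay of $T_j$ and the lower bound on $\mu_{k_j-1}$; that mechanism is the heart of the conditional statement and is missing from your proposal.
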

\iftoggle{fullpaper}
{
\begin{proof}
  It follows from~\eqref{c-yay} that
  \begin{equation} \label{bug1}
    \lim_{k\to\infty} \twonorm{c_k} = \cbar \geq 0.
  \end{equation}
  However, it also follows from \eqref{ts-zero} and line~\ref{line-c-small} of Algorithm~\ref{alg-aal} that
  \begin{equation} \label{bug2}
    \lim_{j\to\infty} \twonorm{c_{k_j}} \leq \lim_{j\to\infty} t_j = 0.
  \end{equation}
  The desired result now follows from~\eqref{bug1} and~\eqref{bug2}.\qed
\end{proof}
}


\subsection{Proof of global convergence result}

\begin{proof}[Proof of Theorem~\ref{thm-main}]
  Lemmas~\ref{lem-26}, \ref{cartoons}, \ref{lem-good}
  and \ref{lem-29}
  cover the only four possible outcomes of Algorithm~\ref{alg-aal}; the
  result follows from those described in these lemmas.
\end{proof}

\iftoggle{arxivpaper}{
\section{Numerical Results} \label{app-results}

In this appendix, we provide detailed results of our experiments described in \S\ref{sec-numerical}.  The problems listed in the tables in this appendix are only those that were used in the performance data provided in \S\ref{sec-numerical}.  Also, note that we provide problem size information for the reformulated model, i.e., that resulting after transformations were employed to create a model of the form \eqref{nep}.  For example, a general inequality constraint with lower and upper bounds would have been augmented with two auxiliary variables to form two equality constraints with lower bounds on the auxiliary variables.

In Tables~\ref{tab.matlab_first}--\ref{tab.matlab_last} for our \Matlab{} software, we indicate the name (Name) along with the numbers of variables ($n$), equality constraints ($m_e$), and bound constraints ($m_b$) of each problem solved.  Then, for each algorithm, we indicate the termination flag (Flag) along with the numbers of iterations (Iter.), function evaluations (Func.), and gradient evaluations (Grad.) required before termination.  The flags indicate whether a first-order stationary point was found (Opt.), an infeasible stationary point was found (Inf.), the iteration limit was reached (Itr.), or the time limit was reached (Time).

\begin{tiny}
\rowcolors{1}{lightgray}{}

\end{tiny}

In Table~\ref{tab.lancelot} for our \lancelot{} software, we indicate the name (Name) along with the numbers of variables ($n$), equality constraints ($m_e$), and bound constraints ($m_b$) of each problem solved.  Then, for each algorithm, we indicate the termination flag (Flag) along with the numbers of iterations (Iter.), gradient evaluations (Grad.), and time (Time) required before termination.  The flags indicate whether a first-order stationary point was found (Opt.), the problem was suspected to be infeasible (Inf.), the iteration limit was reached (Itr.), or the algorithm determined that no more progress could be made.  In this last situation, 
we verified whether the final point was approximately stationary based on a loosened threshold criteria (Thr.) or not (Ter.).

\newpage

\begin{tiny}
\rowcolors{1}{lightgray}{}

\end{tiny}

}

\end{document}